\numberwithin{equation}{section}
\newtheorem{thm}[equation]{Theorem}
\newtheorem{lem}[equation]{Lemma}
\newtheorem{prop}[equation]{Proposition}
\newtheorem{cor}[equation]{Corollary}
\newtheorem{introthm}{Theorem}
\theoremstyle{definition}
\newtheorem{defn}[equation]{Definition}
\theoremstyle{remark}
\newtheorem{ex}[equation]{Example}
\newtheorem{nex}[equation]{Non-example}
\newtheorem{const}[equation]{Construction}
\newtheorem{rem}[equation]{Remark}
\newtheorem{warn}[equation]{Warning}
\DeclareMathOperator{\MSet}{ \boldsymbol{\mathcal{M}}\mathbf{-Set} }
\DeclareMathOperator{\EMSSet}{ \boldsymbol{\mathnormal{E}}\boldsymbol{\mathcal{M}}\mathbf{-SSet} }
\DeclareMathOperator{\EMSSetm}{ \boldsymbol{\mathnormal{E}}\boldsymbol{\mathcal{M}}\mathbf{-SSet}^{\mu} }
\DeclareMathOperator{\EMSSett}{ \boldsymbol{\mathnormal{E}}\boldsymbol{\mathcal{M}}\mathbf{-SSet}^{\tau} }
\DeclareMathOperator{\M}{ \mathcal{M} }
\DeclareMathOperator{\EM}{ \mathnormal{E}\mathcal{M} }
\DeclareMathOperator{\Inj}{ Inj }
\DeclareMathOperator{\maps}{ maps }
\DeclareMathOperator{\colim}{ colim }
\DeclareMathOperator{\CMon}{ CMon }
\DeclareMathOperator{\im}{ im }
\DeclareMathOperator{\incl}{ incl }
\DeclareMathOperator{\id}{ id }
\DeclareMathOperator{\op}{ op }
\DeclareMathOperator{\supp}{ supp }
\newcommand{\Ho}{\textup{Ho}}
\newcommand{\ppo}{\mathbin{\raise.25pt\hbox{\scalebox{.67}{$\square$}}}}
\newcommand{\cat}[1]{\textbf{\textup{#1}}}
\let\del=\partial
\let\phi=\varphi
\begin{document}
\title[Simplicial $*$-modules and mild actions]{Simplicial $\bm*$-modules and mild actions}
\author{Tobias Lenz}\address{\null\hskip-\parindent T.L.: Mathematisches Institut, Rheinische Friedrich-Wilhelms-Universit\"at Bonn, Endenicher Allee 60, 53115 Bonn, Germany\hskip0pt plus 5pt \&\hskip 0pt plus 5pt Mathematical Institute, University of Utrecht, Budapestlaan 6, 3584 CD Utrecht, The Netherlands (\textit{current address})}
\author{Anna Marie Schröter}
\address{\null\hskip-\parindent A.M.S.: Mathematisches Institut, Rheinische Friedrich-Wilhelms-Universit\"at Bonn, Endenicher Allee 60, 53115 Bonn, Germany}
\keywords{Infinite loop spaces, $E_\infty$-monoids, $\mathcal M$-sets, global homotopy theory, equivariant homotopy theory}

	\begin{abstract}
		We develop an analogue of the theory of \emph{$*$-modules} in the world of simplicial sets, based on actions of a certain simplicial monoid $E\mathcal M$ originally appearing in the construction of global algebraic $K$-theory.

		As our main results, we show that strictly commutative monoids with respect to a certain box product on these simplicial $*$-modules yield models of equivariantly and globally coherently commutative monoids, and we give a characterization of simplicial $*$-modules in terms of a certain \emph{mildness} condition on the $E\mathcal M$-action, relaxing the notion of \emph{tameness} previously investigated by Sagave--Schwede and the first author.
	\end{abstract}

	\maketitle

	\section*{Introduction}
	Homotopy theoretic analogues of algebraic structures like groups and rings have been studied by algebraic topologists for more than fifty years. One of the earliest milestones was May's \emph{Recognition Theorem} \cite{may-gil} giving a `homotopy algebraic' description of $n$-fold loop spaces using the language of operads; more precisely, he introduced \emph{$E_n$-operads} and showed that $n$-fold loop spaces are equivalent to the corresponding \emph{(grouplike) $E_n$-algebras}. In the limit case $n=\infty$ this yields an equivalence between connective spectra and \emph{grouplike $E_\infty$-algebras}, which make precise the idea of a `group' whose multiplication is associative, commutative, and unital only up to a coherent system of homotopies. On top of its intrinsic interest, this result is central in that allows the construction of spectra and hence cohomology theories from data of a more algebraic, combinatorial, or categorical flavour.

	Accordingly, the case $n=\infty$ has received special attention, and various other models of such `coherently commutative groups' and more generally `coherently commutative monoids' have been established over the years. In particular, shortly after May's work, Segal introduced his \emph{special $\Gamma$-spaces} \cite{segal} providing an alternative approach to the subject. Furthermore, various `{ultra-commutative}' models have been studied in more recent years \cite{may-operadic, blum, sagave-schlichtkrull, schwede-book, schwede,lenzGglobal}: as their common trait, these model `coherently commutative monoids' as \emph{strictly} commutative monoids in a suitable symmetric monoidal base category; note that this base category is necessarily different from topological spaces with the cartesian product as it has been long known \cite{dold-thom} that the na\"ive notion of a \emph{topological abelian monoid} is too rigid to account for all connective spectra.

	One advantage of the ultra-commutative models over the other approaches is that they easily generalize to the world of \emph{$G$-equivariant homotopy theory} for finite groups $G$: namely, while the correct notion of `$G$-equivariantly coherently commutative monoids' (corresponding to \emph{genuine} equviariant cohomology theories, coming with a form of duality for all $G$-manifolds) is more subtle than an $E_\infty$-algebra with a $G$-action and required the introduction of \emph{genuine $G$-$E_\infty$-algebras} \cite{guillou-may}, and while similarly the $G$-equivariant generalization of Segal's theory relies on a non-obvious strengthening of the specialness condition \cite{shimakawa}, for many of the ultra-commutative models the most na\"ive approach actually turns out to work: $G$-objects in the underlying model represent all $G$-equivariantly coherently commutative monoids \cite{lenzGglobal}.

	In a similar vein, the ultra-commutative models often lend themselves to the study of \emph{global equivariant} phenomena. Namely, many interesting equivariant cohomology theories like $K$-theory or complex bordism turn out to exist in a compatible way for all finite groups $G$, leading to the notion of a \emph{global spectrum} \cite{schwede-book}. Various notions of ultra-commutative monoids then actually already form models of `globally coherently commutative monoids,' and in particular they account for all connective global spectra \cite{schwede-book,lenzGglobal}. Note that for $G\not=1$ not every $G$-equivariant spectrum is part of such a `globally compatible family,' and in particular global homotopy theory is not a generalization of $G$-equivariant homotopy theory. Instead, one can consider \emph{$G$-global homotopy theory} as the joint synthesis of the global and $G$-equivariant approaches, which is in particular a natural home of phenomena like Real $K$-Theory and Real bordism. Again there is a corresponding notion of `$G$-globally coherently commutative monoids' accounting for all connective $G$-global spectra, and these can in particular be simply modelled by $G$-objects in suitable categories of ultra-commutative monoids \cite{lenzGglobal}.

	\subsection*{Two `box products\rlap{\kern1pt'}}
	In the present paper we are concerned with two of the ultra-commutative approaches and their relationship to each other:

	One of the oldest ultra-commutative models of coherently commutative monoids is based on the category $\cat{$\bm{\mathcal L}$-Top}$ of topological spaces with an action of the topological monoid $\mathcal L$ of linear isometric self-embeddings of $\mathbb R^\infty$. This category comes with a \emph{box product} $\boxtimes_{\mathcal L}$ that is associative and symmetric, but in general not unital. Nevertheless, we still have a natural comparison map $X\boxtimes_{\mathcal L}*\to X$ for any $\mathcal L$-space $X$, and Blumberg, Cohen, and Schlichtkrull \cite{blum} defined a \emph{$*$-module} as an $\mathcal L$-space for which this map is an isomorphism. Restricted to the full subcategory of $*$-modules, the box product then indeed gives a symmetric monoidal structure, and the resulting category of commutative monoid objects provides the desired model. This approach can be seen as an unstable analogue of the theory of \emph{$S$-modules} and \emph{(commutative) $S$-algebras} developed in \cite{Elmendorf}. Moreover, it is closely related to May's original operadic models: the monoid $\mathcal L$ is actually given by the $1$-ary operations of a well-studied $E_\infty$-operad (the \emph{linear isometries operad}, denoted by the same symbol), and the box product is an instance of a so-called \emph{operadic product}, which in particular allows us to view the resulting commutative monoids (on the model level) as $\mathcal L$-algebras with extra properties \cite{may-operadic, blum}.

	More recently, a different approach based on the notion of \emph{parsummability} has been studied in \cite{schwede-k-theory,lenzGglobal}. Here the basic objects of study are simplicial sets with an action of a certain simplicial monoid $E\mathcal M$ built from self-injections of the countably infinite set $\{1,2,\dots\}$. These $E\mathcal M$-simplicial sets are required to satisfy an additional technical condition called \emph{tameness}, which roughly says that for any fixed simplex the action of an injection is determined by what it does on a large enough finite set. The tame $E\mathcal M$-simplicial sets then again come with a box product which is now defined as a certain subcomplex of the cartesian product. The resulting commutative monoid objects are called \emph{parsummable simplicial sets}, where the neologism `parsummable' is an abbreviation for `partially summable,' motivated by the fact that these are now in particular simplicial sets equipped with a \emph{strictly} associative, commutative, and unital, but only \emph{partially defined} addition operation. As shown in \cite{lenzGglobal}, these parsummable simplicial sets form a model of globally (and hence in particular also of ordinary) coherently commutative monoids, and more generally parsummable simplicial sets with $G$-action model $G$-globally and $G$-equivariantly coherently commutative monoids.

	\subsection*{New results}
	The monoid $\mathcal L$ is an intrinsically topological object, and in particular while one could build a simplicial monoid from it by just applying the standard singular complex functor, one probably shouldn't expect to obtain natural examples of simplicial sets with such an action via combinatorial or algebraic methods.

	Instead, the simplicial monoid $E\mathcal M$ is a natural combinatorial stand-in for $\mathcal L$, and it similarly extends to a well-known $E_\infty$-operad (in simplicial sets this time), the \emph{injections operad} $\mathcal I$. There is then again a suitable associative, commutative, but non-unital box product on $E\mathcal M$-simplicial sets, leading to the titular simplicial $*$-modules and their commutative monoid objects, which in analogy with \cite{Elmendorf} we call \emph{commutative $*$-algebras}. As our first main result we prove:

	\begin{introthm}[See Propositions~\ref{prop:*-mod-quillen} and~\ref{prop:*-mod-equiv}, Theorem~\ref{thm:*-alg-model}]\label{introthm:*-alg}
		For any finite group $G$, the categories $\cat{$\bm G$-$\bm*$-Mod}$ and $\cat{$\bm G$-$\bm *$-Alg}$ of $*$-modules and commutative $*$-algebras with $G$-action admit $G$-global model structures, modelling $G$-global spaces and $G$-globally coherently commutative monoids, respectively.
	\end{introthm}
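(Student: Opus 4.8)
The plan is to deduce this omnibus statement from the three cited results, each obtained by transferring structure from the already-understood category of $G$-objects in $\eMSSet$. First, for the model structure on $\cat{$G$-$*$-Mod}$ (Proposition~\ref{prop:*-mod-quillen}), I would start from a $G$-global model structure on the ambient $G$-$\EM$-simplicial sets, whose weak equivalences detect $G$-global equivalences of the underlying simplicial sets. The $*$-modules form the full subcategory on which the comparison map $X \boxtimes * \to X$ is an isomorphism, and since $(-) \boxtimes *$ is idempotent up to coherent isomorphism, this realizes $\cat{$*$-Mod}$ as a (co)reflective localization, i.e.\ as the (co)algebras for an idempotent (co)monad. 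I would then transfer the model structure along the associated adjunction, verifying acyclicity of the (generating) trivial cofibrations directly via an explicit cellularity and smallness analysis of $\EM$-cell complexes, as is typical for these $\EM$-actions.

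Second, for the comparison with $G$-global spaces (Proposition~\ref{prop:*-mod-equiv}), rather than constructing a Quillen equivalence from scratch I would route through the known case of tame $\EM$-simplicial sets, which already model $G$-global spaces by \cite{lenzGglobal}. Here the mildness characterization is the essential input: because tameness is stronger than mildness, it exhibits the tame objects as a subcategory of the $*$-modules, and I would compare the two through the inclusion together with its adjoint tameification functor. The key step is to promote this comparison to a Quillen equivalence, showing it is homotopically fully faithful and essentially surjective on homotopy categories; I expect this to reduce to a levelwise statement about the underlying diagrams indexed by finite sets and the relevant subgroups of $\EM$, where mildness guarantees that the tame approximation is a $G$-global equivalence.

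Third, and this is where the real work lies, for the commutative monoids (Theorem~\ref{thm:*-alg-model}) I would lift the $G$-global model structure from $\cat{$G$-$*$-Mod}$ to the category $\cat{$G$-$*$-Alg}$ of commutative monoids for the box product. The obstacle is the standard one for strictly commutative algebras: the free commutative-monoid monad is built from symmetric powers $X^{\boxtimes n}/\Sigma_n$, and transferring a model structure along the free–forgetful adjunction requires these to be homotopically well-behaved, i.e.\ a flatness or projectivity property forcing the $\Sigma_n$-actions on box powers to be suitably free. This is precisely the phenomenon the $\EM$-machinery is designed to supply, and establishing it — together with the resulting pushout-product and monoid axioms — is the hard part. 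Granting it, the transferred model structure exists, and I would conclude by upgrading the Quillen equivalence of the previous step to a \emph{symmetric monoidal} Quillen equivalence, so that it induces a Quillen equivalence on commutative monoids identifying $\cat{$G$-$*$-Alg}$ with parsummable simplicial sets; that the latter model $G$-globally coherently commutative monoids is then \cite{lenzGglobal}. The main difficulty throughout is this final monoidal comparison: one must ensure the functor relating $*$-modules to tame $\EM$-simplicial sets is strong symmetric monoidal (or lax with structure map invertible on cofibrant objects) and that cofibrant commutative $*$-algebras have underlying cofibrant, or at least flat, $*$-modules, so that the equivalence genuinely descends to the categories of monoids.
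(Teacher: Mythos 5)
Your overall architecture matches the paper's: a model structure on $*$-modules, a zig-zag comparison through the tame objects, and a lift to commutative monoids via a White-style existence theorem whose crux is the freeness of the $\Sigma_n$-action on box powers. You correctly identify that last point as the heart of the algebra case, and indeed the paper proves exactly this (Proposition~\ref{prop:box-powers} together with Lemma~\ref{lem:cofibrant-no-fixed-points}: cofibrant objects have no $\mathcal M$-fixed vertices, and simplices of $X^{\boxtimes n}$ have pairwise disjointly supported components, forcing freeness). However, two of your steps contain genuine gaps. First, your plan to "transfer the model structure along the coreflection $(-)\boxtimes *$ with an explicit cellularity and smallness analysis" presupposes that the category of $*$-modules is locally presentable, and this is not automatic for a coreflective subcategory: the paper must prove it separately (Theorem~\ref{thm:mild-presentable}), using the identification of the coreflector $(-)^\mu$ with the cocontinuous operadic functor $E\Inj(2\times\omega,\omega)\times_{E\mathcal M^2}({-}\times *)$ to produce a further right adjoint. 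The analogous statement \emph{fails} for the tame subcategory (the paper exhibits a colimit that $(-)^\tau$ does not preserve), so this is not a formality one can wave through. Relatedly, the paper does not transfer along an adjunction but applies Lurie's recognition criterion to an explicit generating set $E\Inj(A,\omega)^\mu\times_\phi G\times(\del\Delta^n\hookrightarrow\Delta^n)$ with $A$ allowed to be countably \emph{infinite}, and the verification that RLP against these forces a weak equivalence rests on an explicit $E\mathcal M$-equivariant homotopy inverse to $E\mathcal M^\mu\hookrightarrow E\mathcal M$; none of this is supplied by a generic cellularity argument.

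Second, the single homotopical input driving the whole comparison is that the counit $X^\mu\hookrightarrow X$ (equivalently $X\boxtimes *\to X$) is a $G$-global weak equivalence for \emph{every} $E\mathcal M$-$G$-simplicial set $X$ (Corollary~\ref{cor:mu-we}, deduced from the operadic description and the fact that $\Phi\colon E\Inj(n\times\omega,\omega)\times_{E\mathcal M^n}X_1\times\cdots\times X_n\to X_1\times\cdots\times X_n$ is a $G$-global weak equivalence). Your step 2 implicitly needs this but never isolates it; with it in hand, the tame-versus-mild Quillen equivalence is pure $2$-out-of-$3$ against the known result for tame objects, with no levelwise diagram analysis required. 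Finally, your worry about upgrading to a symmetric monoidal Quillen equivalence and about flatness of cofibrant algebras overcomplicates the last step: the comparison functor is literally the inclusion $\cat{$\bm{E\mathcal M}$-$\bm G$-SSet}^\tau\hookrightarrow\cat{$\bm{E\mathcal M}$-$\bm G$-SSet}^\mu$, which is strict symmetric monoidal by construction of the two box products and homotopical; since fibrations and weak equivalences in both algebra categories are created in the underlying categories, the Quillen equivalence on commutative monoids follows directly from the unstable one by inspecting the (co)unit, with no flatness hypothesis needed.
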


	This in particular implies that commutative $G$-$*$-algebras also form a model of \emph{$G$-equivariantly} coherently commutative monoids, see Corollary~\ref{cor:g-*-equivariant}.

	As our second main result, we show how this approach can be reinterpreted in line with the parsummable philosophy. More precisely, we introduce a new \emph{mildness} condition for $E\mathcal M$-actions relaxing the notion of tameness, and we show that the category of mild $E\mathcal M$-simplicial sets admits a natural box product extending the box product of tame $E\mathcal M$-simplicial sets. With these we then have:

	\begin{introthm}[See Theorems~\ref{thm:starModEq} and~\ref{thm:boxprodOperadic}]\label{introthm:*-mod}
		An $E\mathcal M$-simplicial set is a $*$-module if and only if it is mild. Moreover, the box product of mild $E\mathcal M$-simplicial sets agrees with the operadic product associated to the injections operad $\mathcal I$.
	\end{introthm}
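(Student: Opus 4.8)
The plan is to work throughout with the explicit description of the operadic product. Writing $X \boxtimes Y := \mathcal I(2) \times_{\EM \times \EM} (X \times Y)$ for the operadic $\mathcal I$-product and $\epsilon \colon X \boxtimes * \to X$ for the canonical comparison, one unwinds the defining coequalizer to get representatives $[\bm u, x]$, with $\bm u$ a tuple of injections $\omega \sqcup \omega \to \omega$ and the relation $[\bm u \circ (\rho_1 \sqcup \rho_2), x] = [\bm u, \rho_1 x]$, under which $\epsilon[\bm u, x] = \bm u^{(1)}\cdot x$ is restriction to the first summand. The first observation is that a tuple $\bm u^{(1)}$ can be the first-summand restriction of such an injection only when it has co-infinite image, so that $\epsilon$ factors through the sub-$\EM$-simplicial set of simplices admitting a co-infinite-image injection in their isotropy — which is exactly the mechanism the mildness condition controls. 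I would prove Theorem~\ref{thm:starModEq} by showing separately that $\epsilon$ is surjective and that it is injective precisely when $X$ is mild.

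For ``mild $\Rightarrow$ $*$-module'': given a simplex $x$, mildness produces a finite set $A$ and hence an injection $w$ with co-infinite image that fixes $x$; setting $\bm u^{(1)} = w$ and filling the second summand into the infinite complement of $\im w$ yields a class $[\bm u, x]$ with $\epsilon[\bm u, x] = wx = x$, giving surjectivity. The substance is well-definedness of the resulting map $\Psi \colon x \mapsto [\bm u, x]$ and the identity $\Psi\epsilon = \id$: two admissible choices $w, w'$ must be shown to give the same class, which amounts to navigating the coequalizer and is where one genuinely uses that mildness supplies fixators that are identities on a common finite set with still-infinite room, permitting a common refinement. The reverse implication runs backwards: the inverse $\epsilon^{-1}(x) = [\bm u, x']$ exhibits $x = \bm u^{(1)} x'$ with $\bm u^{(1)}$ of co-infinite image, and the assertion that this holds simplexwise and compatibly with faces and degeneracies is precisely the data of a mildness witness. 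The simplicial ($E$) direction is handled levelwise, treating the coordinates $\sigma_0, \dots, \sigma_n$ simultaneously and checking compatibility with the structure maps at the end.

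For Theorem~\ref{thm:boxprodOperadic}, write $X \ppo Y \subseteq X \times Y$ for the box product of mild objects, consisting of pairs of simplices with disjoint supports. The natural map $c \colon X \boxtimes Y \to X \times Y$, $[\bm u, (x,y)] \mapsto (\bm u^{(1)} x, \bm u^{(2)} y)$, lands in $X \ppo Y$ because $\bm u^{(1)}$ and $\bm u^{(2)}$ have disjoint images, hence so do the supports of $\bm u^{(1)} x$ and $\bm u^{(2)} y$; I would then show $c$ is an isomorphism for mild $X, Y$. Surjectivity mirrors the previous argument: given $(x,y)$ with disjoint supports, mildness of each factor yields co-infinite-image fixators $w_1, w_2$, and the disjointness of the supports together with the infinite complements lets one arrange $\im w_1 \cap \im w_2 = \emptyset$, producing a single $\bm u$ with $c[\bm u,(x,y)] = (x,y)$. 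Injectivity again reduces to the coequalizer analysis, now fed by mildness of both variables. Formally, the cleanest route is to deduce the general statement from the case $Y = *$ already handled, combining naturality of $c$, symmetry, and associativity of $\boxtimes$ to identify $c$ with the comparison isomorphisms $\epsilon$ in each variable; this simultaneously upgrades $c$ to a monoidal comparison.

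The main obstacle in both theorems is the injectivity half, i.e.\ controlling the equivalence relation defining the operadic product: two lifts of the same simplex need not differ by a single pair $(\rho_1, \rho_2)$, so one must show that any zig-zag of relations can be straightened using mildness-provided injections that are simultaneously identities on a prescribed finite set and have co-infinite — and, in Theorem~\ref{thm:boxprodOperadic}, disjoint — images. The non-invertibility of injections is what makes this delicate: unlike in the linear-isometries model there is no ambient isomorphism $\omega \sqcup \omega \cong \omega$ to undo, so ``making room'' must be done by hand, and this is exactly what mildness guarantees. A secondary technical point is checking that all these choices can be made compatibly across simplicial degrees, which I expect to follow formally from the functoriality and contractibility of the $E$-construction once the combinatorial core in a single degree is in place.
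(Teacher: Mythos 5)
Your overall architecture matches the paper's in outline: surjectivity of the comparison map is exactly the ``choose injections fixing the supporting sets pointwise'' argument, and the direction ``$*$-module $\Rightarrow$ mild'' needs only surjectivity together with the observation that $(f_0\iota_1,\dots,f_n\iota_1).y$ is $k$-supported on the co-infinite set $\im(f_k\iota_1)$ (Lemma~\ref{lem:ksupp}). However, the injectivity half --- which you correctly single out as the main obstacle --- is left entirely unresolved, and it is where essentially all of the mathematical content lies. To straighten the zig-zags in the defining coequalizer one needs: (i) that two injections $n\times\omega\to\omega$ agreeing on the supporting sets represent the same class modulo the right $\M^n$-action (Lemmas~\ref{lem:Inj/M} and~\ref{lem:f.xEqualg.x}, proved via an explicitly constructed map $h\mapsto\tilde h$ hitting both representatives, resting on Propositions~\ref{prop:infiniteCompl} and~\ref{prop:equalInM/M_A}); (ii) that each $u\in\M$ acts injectively on a mild object (Lemma~\ref{lem:injAct}); and (iii) a converse support lemma (Lemma~\ref{lem:kfsupp}) letting one pull supports of $\Phi[f;x]=\Phi[g;y]$ back to matched supports of $x$ and $y$. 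None of these is routine in the co-infinite setting, and your sketch gives no indication of how to obtain them. Moreover, the proposed shortcut of deducing the binary comparison from the case $Y=*$ by ``naturality, symmetry, and associativity of $\boxtimes$'' does not work: the associativity comparison between the operadic product and the box product is a statement of the same depth as the theorem itself, not a formal consequence of unitality. (The paper runs the logic the other way: the $n$-ary Theorem~\ref{thm:boxprodOperadic} is proved directly and Theorem~\ref{thm:starModEq} is read off from it.)

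More seriously, several of your formulations silently import tame-case intuition that fails in the mild setting. You write that ``mildness produces a finite set $A$'' and that one uses fixators that are ``identities on a common finite set''; mildness only produces a \emph{co-infinite} set, and this distinction is the whole point: the intersection result needed for your ``common refinement'' (Proposition~\ref{prop:capSupp}) requires a genuinely new and considerably harder proof than its finite analogue, and minimal supports no longer exist for mild objects (see the Warning following that proposition), so ``pairs of simplices with disjoint supports'' is not even well-defined. Relatedly, your description of the box product omits the requirement that the union of the two supporting sets be co-infinite (Definition~\ref{def:BoxProdT}); this is automatic for finite supports but essential here --- without it the two fixators cannot be assembled into a single injection $2\times\omega\to\omega$, so your surjectivity argument for Theorem~\ref{thm:boxprodOperadic} would attempt to hit pairs that do not in fact lie in the image of $\Phi$.
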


	Under these identifications, the equivalence from Theorem~\ref{introthm:*-alg} is then in fact simply given by the inclusion of tame into mild $E\mathcal M$-simplicial sets. Using the close connection between commutative $*$-algebras and $\mathcal I$-algebras, we moreover sketch (Remark~\ref{rk:operads-again}) how this gives a new and more direct proof of the equivalence between the operadic and parsummable approaches to $G$-globally and $G$-equivariantly coherently commutative monoids first established in \cite{lenzOperads}.

	\subsection*{Open questions}
	It is natural to ask how much of these results transfers to the world of $\mathcal L$-spaces. For example, it would be desirable to have a more intrinsic characterization of $*$-modules à la \cite{blum} in terms of an analogue of the mildness condition. While such a characterization indeed seems plausible at this point, a proof of this would require different techniques than the ones employed in the present paper and in particular would have to deal with pointset topological issues.

	Furthermore, one can ask whether also the $\mathcal L$-space approach allows to model globally and more generally $G$-globally coherently commutative monoids. For the trivial group, some results in this direction were obtained by Böhme \cite{boehme}, who in particular proved that $*$-modules are a model of global spaces. While he also provides a comparison result for non-commutative monoids, the (harder) commutative case as well as the situation for general $G$ are still open to our knowledge.

	\subsection*{Outline}
	In the main part of the paper, it will actually be more convenient to argue precisely the other way round than suggested above: namely, we will focus on the study of mild $E\mathcal M$-simplicial sets and their box product, and then use Theorem~\ref{introthm:*-mod} to recast these results in the language of $*$-modules and their operadic product. In more detail:

	In Section~\ref{sec:M-sets} we recall the discrete monoid $\mathcal M$ underlying $E\mathcal M$ and establish basic results about the combinatorics of $\mathcal M$-sets that we will need throughout. Section~\ref{sec:*-mod} is then concerned with the category of mild $E\mathcal M$-simplicial sets and its symmetric monoidal structure, culminating in the proof of Theorem~\ref{introthm:*-mod}. Finally in Section~\ref{sec:*-alg} we introduce commutative $*$-algebras and prove Theorem~\ref{introthm:*-alg}.

	\subsection*{Acknowledgements}
	The first two sections are based on the second author's master thesis written under the supervision of Stefan Schwede and the first author, which also contained the non-equivariant analogue of Theorem~\ref{introthm:*-alg}.

	The authors would like to thank Stefan Schwede for helpful discussions and feedback.

	\section{\texorpdfstring{${\M} $}{M}-sets}\label{sec:M-sets}
	Before we can come to the simplicial monoid $E\mathcal M$, we first have to understand its discrete analogue:

	\begin{defn}
		We define \emph{$ \M $} as the monoid of injective self-maps of the countably infinite set $ \omega = \{1,2,3,\dots\} $. An \emph{$ \M $-set} is a set with left $ \M $-action.
		We write $ \MSet $ for the category of $ \M $-sets with the $ \M $-equivariant maps.
	\end{defn}

	$\mathcal M$-actions (on abelian groups) were originally investigated in the context of the na\"ive homotopy groups of symmetric spectra \cite{shipley-thh, schwede-symmetric}. More recently, $\mathcal M$-actions on simplicial sets were studied non-equivariantly by Sagave and Schwede \cite{schwede}, who exhibited a close connection to \emph{$\mathcal I$-spaces} in the sense of \cite{sagave-schlichtkrull}, and from a global point of view by the first author \cite[Section~1.2]{lenzGglobal}.

	In general, monoid actions behave in many ways pathological and unfortunately $\mathcal M$-actions are no exception from this. However, many examples of interest satisfy an additional technical condition called \emph{tameness}, causing them to behave much more like objects with an action of a group. Below, we will recall the notion of tameness and introduce a relaxation of it, that we call \emph{mildness}. This new mildness condition will still be enough to recover many of the pleasant properties of tame $\M$-actions established in \cite{schwede}, but the subcategory of mild $\mathcal M$-objects will be better behaved than the subcategory of tame objects (cf.~Theorem~\ref{thm:mild-presentable}).

	For this we first introduce:

	\begin{defn}\label{def:supported-M}
		Let $ X $ be an $ \M $-set and $ A\subset \omega $. We write \emph{$ \M_A $} for the submonoid of $ \M $ given by the injections fixing $ A $ elementwise. We say that $ x\in X $ is \emph{supported on $ A $} if it is fixed by $\M_A$.
	\end{defn}

	\begin{lem}
		\label{lem:agreeSupp}
		Let $ X $ be an $ \M $-set, let $A\subset\omega$ be co-infinite (i.e.~$|\omega\setminus A|=\infty$), and let $x\in X$ be supported on $A$. Then the following hold:
		\begin{enumerate}
			\item If $ f,g\in \M $ agree on $ A $ then $ f.x = g.x $.\label{item:aS-through-A}
			\item For every $ f\in\M $ the element $ f.x $ is supported on $ f(A) $.\label{item:aS-action}
			\item If $ f.x $ is supported on $ f(A') $ for some $ A'\subset A $, then $ x $ is supported on $ A' $.\label{item:aS-preimage}
		\end{enumerate}
	\end{lem}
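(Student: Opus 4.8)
The plan is to work directly with the defining property of support, namely that $x$ being supported on $A$ means $g.x = x$ for all $g \in \M_A$, and to reduce each of the three claims to a statement about producing suitable elements of $\M$ that agree with given injections on the relevant sets. The key technical input throughout is that $A$ is co-infinite, which gives us room to interpolate: if two injections agree on $A$, we can often glue them together into a single element of $\M$ by exploiting the infinitely many coordinates outside $A$.

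For part~\eqref{item:aS-through-A}, suppose $f, g \in \M$ agree on $A$. The idea is to find $h \in \M_A$ with $f.x = f.(h.x)$ rearranged so that $g = f \circ h$ on the support, or more robustly, to construct an injection that realizes $g$ as $f$ precomposed with something fixing $A$. Concretely, I would like an $h \in \M_A$ such that $g = f h$; this requires $fh$ and $g$ to agree everywhere, but since $f,g$ only agree on $A$ we cannot expect this on the nose. The cleaner approach is to choose an auxiliary $k \in \M_A$ whose image avoids enough of $\omega$ (possible since $A$ is co-infinite) so that both $f|_{\im k}$ and $g|_{\im k}$ can be independently extended; then $f.x = f.(k.x) = (fk).x$ and similarly $g.x = (gk).x$, and by arranging $fk$ and $gk$ to literally coincide on all of $\omega$ we conclude $f.x = g.x$. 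Here the freedom to shrink the support via $k \in \M_A$ and then extend $f, g$ off the shifted image is exactly what co-infiniteness buys us.

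Part~\eqref{item:aS-action} should then follow formally from~\eqref{item:aS-through-A}. To show $f.x$ is supported on $f(A)$, take any $h \in \M_{f(A)}$; I want $h.(f.x) = f.x$, i.e.~$(hf).x = f.x$. Since $h$ fixes $f(A)$ pointwise, the injections $hf$ and $f$ agree on $A$ (for $a \in A$, $hf(a) = h(f(a)) = f(a)$ because $f(a) \in f(A)$). Now $f(A)$ is again co-infinite — because $f$ is injective, $\omega \setminus f(A) \supseteq f(\omega \setminus A)$ is infinite — so part~\eqref{item:aS-through-A} applies with the roles of the two agreeing maps, giving $(hf).x = f.x$ as desired.

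Part~\eqref{item:aS-preimage} is the converse direction and I expect it to be the main obstacle, since we must transport support backward along $f$. The hypothesis is that $f.x$ is fixed by $\M_{f(A')}$, and we want $x$ fixed by $\M_{A'}$. Given $h \in \M_{A'}$, I would try to produce $\tilde h \in \M_{f(A')}$ with $\tilde h \circ f = f \circ h$ on $A$ (or on a large enough set), so that $f.(h.x) = \tilde h.(f.x) = f.x$ and then cancel $f$ using injectivity of the action-by-$f$ map on the relevant support class. The difficulty is twofold: first, defining $\tilde h$ on $\im f$ by $\tilde h(f(a)) = f(h(a))$ and extending it to an injection of all of $\omega$ fixing $f(A')$ — this extension requires a counting argument on the complements, where co-infiniteness of $A$ (hence of $f(A)$) is essential; second, passing from $f.(h.x) = f.x$ back to $h.x = x$, which is not immediate because $f$ acts as an injection but the action map $y \mapsto f.y$ need not be injective on all of $X$. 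To handle this I would instead aim to directly verify $h.x = x$ by showing $h$ and the identity agree on $A'$ after suitable modification, applying~\eqref{item:aS-through-A} to $x$ itself, and using the supported hypothesis on $f.x$ only to pin down the behavior of $h$ on $A \setminus A'$; carefully chosen auxiliary injections supported away from $A'$ should let me reduce to the already-established part~\eqref{item:aS-through-A}.
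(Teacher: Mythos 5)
Your part (2) is correct and is exactly the intended argument (for $h\in\M_{f(A)}$ the maps $hf$ and $f$ agree on $A$, and $f(A)$ is again co-infinite). The problems are in parts (1) and (3).

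In part (1), the auxiliary map $k\in\M_A$ cannot do what you want. Since $k$ fixes $A$ pointwise and is injective, $k(\omega\setminus A)$ is an infinite subset of $\omega\setminus A$, so $\im(k)=A\sqcup k(\omega\setminus A)$ always meets $\omega\setminus A$ in an infinite set; for $fk$ and $gk$ to coincide you would need $f$ and $g$ to agree on all of $\im(k)$, hence on infinitely many points outside $A$, which is not part of the hypothesis and can genuinely fail (take $f=g=\id$ on $A$ and let $f$ and $g$ map $\omega\setminus A$ into disjoint subsets of $\omega\setminus A$). In short, precomposition with elements of $\M_A$ can never erase the disagreement of $f$ and $g$ off $A$. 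The repair --- and the paper's actual argument --- is to factor on the other side: choose a \emph{bijection} $h\in\M$ with $h|_A=f|_A=g|_A$ (possible because $\omega\setminus A$ and $\omega\setminus f(A)$ are both countably infinite), so that $h^{-1}f$ and $h^{-1}g$ lie in $\M_A$ and hence $f.x=h.(h^{-1}f).x=h.x=h.(h^{-1}g).x=g.x$. Invertibility of the interpolating map is the key point your sketch is missing.

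Part (3) as written is not a proof: you describe two strategies, note yourself that the first founders on the non-injectivity of the action of $f$, and the second (``show $h$ and the identity agree on $A'$ after suitable modification \dots\ carefully chosen auxiliary injections should let me reduce to part (1)'') has no construction behind it --- note that part (1) applied to $x$ requires agreement on all of $A$, not merely on $A'$, so there is a real gap to bridge there. The paper's argument is short once the bijection $h$ above is available: since $h^{-1}f\in\M_A$ one has $x=(h^{-1}f).x=h^{-1}.(f.x)$, and then part (2) applied to $h^{-1}$ and the element $f.x$ (supported on the co-infinite set $f(A')\subset f(A)$) shows that $x$ is supported on $h^{-1}(f(A'))=A'$. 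No cancellation of $f$ is ever needed; the hypothesis on $f.x$ is transported forward along $h^{-1}$ rather than backward along $f$.
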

	\begin{proof}
		For finite $A$, the first two statements appear as \cite[Proposition 2.5]{schwede}; the proof in our situation is completely analogous, so we will only prove the first one. For this we observe that since $ A $ has infinite complement, we can find a \emph{bi}jection $ h \in \M $ such that $ h $ agrees with $ f $ and $ g $ on $ A $. Then $ h^{-1}f $ and $ h^{-1}g $ fix $ A $ elementwise. Hence $ f.x = h.h^{-1}f.x = h.x = h.h^{-1}g.x = g.x $.

		For the third statement we again take a bijection $ h\in \M $ such that $ h^{-1}f $ fixes $ A $ elementwise, so that $x=(h^{-1}f).x= h^{-1}.f.x$. Then the second statement implies that $x $ is supported on $ h^{-1}(f(A')) = h^{-1}f(A') = A' $ as claimed.
	\end{proof}

	The above notion also behaves well under finite intersections:

	\begin{prop}\label{prop:capSupp}
		Let $ X $ be an $ \M $-set, let $A,B\subset\omega$ co-infinite, and let $ x\in X $ be supported on both $ A $ and $ B $. Then $ x $ is supported on $ A\cap B $.
	\end{prop}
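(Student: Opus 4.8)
The plan is to prove the statement directly from the definition: writing $C=A\cap B$, I must show that every $j\in\M_C$ fixes $x$. The key conceptual difficulty is that, in contrast to group actions, I cannot simply factor a general $j\in\M_C$ into pieces lying in $\M_A$ and $\M_B$; indeed, already when $A,B$ are the two halves of a partition of $\omega$ into two infinite sets, the analogous factorization fails at the level of permutations. The argument will therefore proceed in three stages: first manufacture an infinite ``reservoir'', then reduce to bijections, and finally invoke a purely group‑theoretic factorization.

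\emph{Creating room.} First I would apply the injection $\iota\colon\omega\to\omega$, $n\mapsto 2n$. By Lemma~\ref{lem:agreeSupp}~(\ref{item:aS-action}), the element $\iota.x$ is supported on $P:=\iota(A)$ and on $Q:=\iota(B)$; since both $P$ and $Q$ consist of even numbers, the set $R:=\omega\setminus(P\cup Q)$ contains all odd numbers and is hence infinite. Moreover $P\cap Q=\iota(A\cap B)=\iota(C)$ because $\iota$ is injective. The point of this step is that the pair $(P,Q)$ now carries an infinite reservoir $R$ at its disposal, which the original pair $(A,B)$ need not possess. I claim that $\iota.x$ is supported on $P\cap Q$. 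To see this, let $j\in\M_{P\cap Q}$; since $\iota.x$ is supported on the co-infinite set $P$, Lemma~\ref{lem:agreeSupp}~(\ref{item:aS-through-A}) allows me to replace $j$ by any injection agreeing with it on $P$. Extending $j|_P$ to a bijection $j'$ of $\omega$ (possible as $P$ is co-infinite) I obtain $j.(\iota.x)=j'.(\iota.x)$, and $j'$ still fixes $P\cap Q$ pointwise because $P\cap Q\subseteq P$. Thus it suffices to show that every bijection fixing $P\cap Q$ pointwise fixes $\iota.x$.

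\emph{The group-theoretic core.} Let $\mathrm{Sym}(S)\subseteq\M$ denote the subgroup of bijections fixing $S\subseteq\omega$ pointwise; then $\mathrm{Sym}(P)\subseteq\M_P$ and $\mathrm{Sym}(Q)\subseteq\M_Q$ both fix $\iota.x$. The claim thus reduces to the statement that, whenever $R=\omega\setminus(P\cup Q)$ is infinite, the subgroups $\mathrm{Sym}(P)$ and $\mathrm{Sym}(Q)$ generate all of $\mathrm{Sym}(P\cap Q)$. Writing $P'=P\setminus Q$ and $Q'=Q\setminus P$, so that $\mathrm{Sym}(P)=\mathrm{Sym}(Q'\sqcup R)$ and $\mathrm{Sym}(Q)=\mathrm{Sym}(P'\sqcup R)$ both act on $Y:=P'\sqcup Q'\sqcup R=\omega\setminus(P\cap Q)$, I would prove the factorization by a ping‑pong through the reservoir. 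Given $\pi\in\mathrm{Sym}(Y)$, I seek a factorization $\pi=\sigma'\tau\sigma$ with $\sigma,\sigma'\in\mathrm{Sym}(P'\sqcup R)$ and $\tau\in\mathrm{Sym}(Q'\sqcup R)$. Any such $\tau$ is forced to fix $P'$ pointwise, and unravelling this constraint shows that a valid choice of $\sigma,\sigma'$ exists as soon as $(P'\sqcup R)\setminus\pi^{-1}(Q')$ is infinite. Symmetrically, the opposite factorization, with the roles of $P'$ and $Q'$ interchanged, exists as soon as $(Q'\sqcup R)\setminus\pi^{-1}(P')$ is infinite. At least one of these two conditions always holds: were both to fail, a cofinite subset of the infinite set $R$ would be mapped by $\pi$ into $Q'$ and a cofinite subset into $P'$, so that an infinite subset of $R$ would be mapped into $P'\cap Q'=\varnothing$, which is absurd.

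\emph{Finishing, and the main obstacle.} Having shown that $\iota.x$ is supported on $\iota(C)=P\cap Q$, I conclude by Lemma~\ref{lem:agreeSupp}~(\ref{item:aS-preimage}), applied with the co-infinite set $A$, the subset $A'=C\subseteq A$, and the injection $\iota$: since $\iota.x$ is supported on $\iota(C)$, the element $x$ is supported on $C=A\cap B$. The heart of the matter, and the step I expect to require the most care, is the symmetric-group factorization of the previous paragraph; in particular I would want to double-check the extendability of the partial bijections (that the infinite reservoir $R$ always leaves enough room to complete $\sigma,\sigma'$ to honest permutations), which is routine but not automatic. Conceptually, the crucial point is that one cannot run the symmetric-group argument on $(A,B)$ directly — the finite-reservoir case genuinely forces the use of the non-surjective injection $\iota$ to create an infinite reservoir first.
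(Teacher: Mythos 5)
Your proof is correct, and it takes a genuinely different route from the paper's. The paper argues directly on a given $f\in\M_{A\cap B}$: since $A^c=(A^c\setminus f(A))\cup(A^c\setminus f(B))$, at least one of these two sets is infinite, and in each case it builds an explicit chain of injections $f,f_1,f_2(,f_3)$ in which consecutive terms agree on $A$ or on $B$ and the last term fixes $A$ pointwise, so that $f.x=f_1.x=\dots=x$ by Lemma~\ref{lem:agreeSupp}-(\ref{item:aS-through-A}). You instead first apply the doubling injection $\iota$ to manufacture an infinite reservoir $R=\omega\setminus(\iota(A)\cup\iota(B))$, reduce to bijections using Lemma~\ref{lem:agreeSupp}-(\ref{item:aS-through-A}) and the co-infiniteness of $\iota(A)$, prove the purely group-theoretic statement that the bijections fixing $P$ pointwise together with those fixing $Q$ pointwise generate all bijections fixing $P\cap Q$ pointwise whenever $\omega\setminus(P\cup Q)$ is infinite, and finally transport the conclusion back along $\iota$ via Lemma~\ref{lem:agreeSupp}-(\ref{item:aS-preimage}). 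This cleanly explains how the factorization strategy from the finite case survives despite the counterexample in the remark following the proposition: the obstruction there is precisely the absence of a reservoir, which your first step creates. Both arguments ultimately rest on an analogous dichotomy (your two conditions on $\pi^{-1}(P')$ and $\pi^{-1}(Q')$ intersected with $R$, versus the paper's on $A^c\setminus f(A)$ and $A^c\setminus f(B)$), so neither avoids a case split. Your version is arguably more conceptual and isolates a reusable group-theoretic core, at the cost of the extendability bookkeeping for the partial bijections, which you rightly flag; it does go through, e.g.\ by choosing the injection $P'\to(P'\sqcup R)\setminus\pi^{-1}(Q')$ so as to miss an infinite subset whose $\pi$-image is again infinite, guaranteeing that both it and its composite with $\pi$ extend to permutations of $P'\sqcup R$.
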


	Beware that the proposition is not true without the co-infiniteness assumption, see \cite[discussion after Definition~2.4]{schwede}.

	\begin{proof}
		Let $ f\in \M_{A\cap B} $. Then we have
		\begin{align*}
		(A^c\setminus f(A))\cup (A^c\setminus f(B))
		&= A^c\setminus(f(A)\cap f(B))\\
		&= A^c\setminus f(A\cap B)\\
		&= A^c\setminus (A\cap B)\\
		&= A^c
		\end{align*}
		since $ f $ is injective and fixes $ A\cap B $ elementwise. Thus, at least one of $ A^c\setminus f(A) $ and $ A^c\setminus f(B) $ is infinite because $ A^c $ is so. We will consider these two cases separately.

		\medskip
		\textit{Case 1.} If $ |A^c\setminus f(A)|=\infty $, we first construct a map $ f_1 \in \M $ with $ f_1 |_A = f|_A $ and $ {f_1}(A^c)\subset A^c $ as follows: we choose an injection $ \phi: A^c \to A^c\setminus f(A) $ (which is possible since the right hand side is assumed to be infinite), and we define $ f_1 (a) = f(a) $ for $ a\in A $ and $ f_1 (b) = \phi(b) $ for $ b \in A^c $; this is again injective as $ f|_A $ and $ \phi $ are injective with $ \im(\phi) \cap f(A) = \emptyset $.
		Moreover, $ f_1(A^c) \subset A^c $ by definition of $ \phi $.

		We then define $f_2\colon\omega\to\omega$ via
		\begin{equation*}
			f_2(a)=\begin{cases}
				a & \text{if }a\in A\\
				f_1(a) & \text{otherwise}.
			\end{cases}
		\end{equation*}
		Clearly, $ f_2|_A $ and $f_2|_{A^c}$ are injective, and $f_2(A)\cap f_2(A^c)= A\cap f_1(A^c)= \emptyset $; thus, $ f_2 $ is injective. If now $ b\in B\setminus A\subset A^c $ then $ f_2(b) = f_1(b) $, while for $ b\in A\cap B $ we have $ f_2(b) = b = f_1(b) $; thus, $ f_2 $ and $ f_1 $ agree on $ B $. By Lemma~\ref{lem:agreeSupp}-(\ref{item:aS-through-A}) we therefore get $f.x=f_1.x=f_2.x=x$ as claimed.

		\medskip
		\textit{Case 2.} If $ |A^c\setminus f(B)|=\infty $, we first construct a map $ g_1\in \M $ with $ g_1|_B = f|_B $ and $g_1(B^c)\subset A^c$ together with an injection $ \psi\colon B\setminus A \to \im(g_1)^c \cap A^c$ as follows: we choose an injection $\hat\psi:{B^c} \sqcup {(B\setminus A)} \to A^c\setminus f(B) $, and we define $ g_1(b) = f(b) $ for $ b\in B $ and $ g_1(a)=\hat\psi(a) $ for $ a \in B^c $. Then $ g_1 $ is injective by the same argument as above, agrees with $ f $ on $ B $ by construction, sends $B^c$ to $A^c$ because $\hat\psi$ does, and $\psi\mathrel{:=} \hat\psi|_{B\setminus A}$ has image disjoint from $\im(g_1)$ by injectivity of $\hat\psi$.

		Next, we construct $ g_2 \in \M $ with $ g_2|_A = g_1|_A $ and $ g_2^{-1} (A) \subset A $ as follows: we set $ g_2(b) = \psi(b) $ for $ b\in B\setminus A $ and $ g_2(c) = g_1(c) $ for $ c\notin B\setminus A $, which is clearly injective and agrees with $g_1$ on $A$. If now $ g_2(d) \in A $, then $d\notin B\setminus A$ as $g_2(B\setminus A)=\psi(B\setminus A)\subset A^c$, but also $d\notin B^c$ by definition of $g_1$, so $d\in (B\setminus A)^c\cap B=A\cap B\subset A$.

		Finally, we define $ g_3$ via
		\begin{equation*}
			g_3(a)=\begin{cases}
				a & \text{if }a\in A\\
				g_2(a) & \text{otherwise}.
			\end{cases}
		\end{equation*}
		This is injective as $ g_2 $ is injective and $ g_2^{-1}(A)\subset A$. If now $ a\in A\cap B $, then $ g_3(a) = a = g_2(a) $, while for $ b\in B\setminus A $ we have $ g_3(b) = g_2(b) $ by definition of $ g_3 $. Hence, $ g_3|_B=g_2|_B $ and altogether $f.x=g_1.x=g_2.x=g_3.x=x$ as claimed.
	\end{proof}

	\begin{rem}
		If  $ A $ and $ B $ are finite, the proposition can be proven by factoring $ f $ as a composition of maps in $ \M $ each of which fixes $ A $ or $ B $ elementwise, see \cite[Proposition 2.3]{schwede}. In our case such a decomposition of $ f $ is not possible anymore, which is the reason the above proof is considerably more involved.

		As a concrete counterexample, let $ A\subset \omega $ be both infinite and co-infinite. Then any map fixes $\emptyset=A\cap A^c$ pointwise; we claim however that not every map factors as a composite of maps fixing $A$ or $A^c$. For this, let $f\in\M$ with $f(A)\not\subset A$, e.g.~$f(x)=x+1$. If now $ \sigma \in \M_A,\tau\in\M_{A^c}$, then in particular, $ \sigma(A) = A $ and $ \tau(A^c) = A^c $, whence also $ \tau(A) \subset A $ by injectivity. Thus, any composition of maps in $ \M_A \cup \M_{A^c} $ maps $ A $ into $ A $, and $f$ cannot be factored accordingly.
	\end{rem}

	\begin{warn}
		If $X$ is an $\M$-set and $x\in X$ is supported on some \emph{finite} set, then the proposition implies that $x$ is supported on a unique minimal finite (or equivalently co-infinite) set $\supp(x)$, called the \emph{support of $x$} \cite[Definition~2.4]{schwede}. In general however, this is no longer true, i.e.~the notion of support is not compatible with \emph{infinite} intersections as the following example shows:

		Write $ A_n\mathrel{:=}\{k\in \omega : k \text{ even and }k\geq 2n \} $, so that each $A_n$ is co-infinite and $\bigcap_{n\ge0}A_n=\emptyset$. We then define $X\mathrel{:=}{\M} / {\sim} $ where $\sim$ is the equivalence relation \[f \sim g \mathrel{:\Leftrightarrow} \text{$f(2x)=g(2x)$ for almost all $x\in\omega$} ;\] clearly the usual $\mathcal M$-action on itself descends to $X$.

		If now $f\in\mathcal M_{A_n}$ for some $n$, then $f.[\id]=[f]$ agrees with $\id$ by definition of the equivalence relation, i.e.~$[\id]\in X$ is supported on all $A_n$. However, it is not supported on the empty set (i.e.~$\mathcal M$-fixed): $f(x)=x+1$ does not fix any even numbers, and in particular $f.[\id]=[f]\not=[\id]$.
	\end{warn}

	\begin{defn}
		Let $ X $ be an $ \M $-set.
		\begin{enumerate}
			\item $X$ is called \emph{tame} if for every $x\in X$ there exists a \emph{finite} set $A\subset\omega$ such that $x$ is supported on $A$.
			\item $X$ is called \emph{mild} if for every $x\in X$ there exists a \emph{co-infinite} set $A\subset\omega$ such that $x$ is supported on $A$.
		\end{enumerate}
	\end{defn}

	Clearly, each tame $ \M $-set is also mild.

	\begin{ex}\label{ex:injA}
		For any finite set $A\subset\omega$, the set $\Inj(A,\omega)$ of injections $A\to\omega$ with $\M$-action via postcomposition is tame, hence in particular mild. More precisely, any injection $i\colon A\to\omega$ is supported on its image $i(A)$.
	\end{ex}

	\begin{nex}
		The $\M$-set $\M$ itself is not mild: surjections are only fixed by the identity, hence in particular not supported on any co-infinite set.
	\end{nex}

	\begin{defn}
		Let $ X $ be an $ \M $-set. We define
		\begin{align*}
			X^\tau&=\{x\in X : \text{$x$ is supported on a finite set $A\subset\omega$}\}\\
			X^\mu&=\{x\in X : \text{$x$ is supported on a co-infinite set $A\subset\omega$}\}.
		\end{align*}
	\end{defn}

	By Lemma \ref{lem:agreeSupp}-(\ref{item:aS-action}) these are $\mathcal M$-subsets of $X$; clearly, $ X^{\mu} $ is then the maximal mild $ \M $-subset of $ X $ and $ X^{\tau} $ is its maximal tame $ \M $-subset.

	\begin{ex}\label{ex:mildM}
		We claim that the $ \M $-set $ \M^{\mu} $ is the set of all injections $ u\in\M $ whose image has infinite complement.
		Indeed, any $ u\in\M $ is clearly supported on its image; conversely, if $ A\subset\omega $ with infinite complement such that $ \im(u)\not\subset A $, then we can pick $b\in\im(u)\setminus A$ and $f\in\M_A$ missing $b$, in which case $fu\not=u$.

		Note that the same argument also shows that $\M^\tau=\emptyset$; in particular $\M^\mu$ is an example of a mild $\M$-set that is not tame.
	\end{ex}

	We record two further properties of mild $\mathcal M$-sets for later use. Firstly, we have the following generalization of \cite[Proposition~2.7]{schwede}:

	\begin{lem}
		\label{lem:injAct}
		Let $ X $ be a mild $ \M $-set and $ f\in\M $. Then the action of $ f $ is an injective map $ X\to X $.
	\end{lem}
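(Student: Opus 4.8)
The plan is to prove injectivity by showing that whenever $f.x=f.y$, the two elements $x$ and $y$ admit a \emph{common} co-infinite support on which $f$ can be inverted inside $\M$. So suppose $f.x=f.y$. Using mildness, I would first choose co-infinite sets $A,B\subset\omega$ supporting $x$ and $y$, respectively. By Lemma~\ref{lem:agreeSupp}-(\ref{item:aS-action}) the common value $z\mathrel{:=}f.x=f.y$ is then supported on both $f(A)$ and $f(B)$; these are again co-infinite, since e.g.\ $f(A)^c\supset f(A^c)$ is infinite by injectivity of $f$. Proposition~\ref{prop:capSupp} now applies and shows that $z$ is supported on $f(A)\cap f(B)=f(A\cap B)$, using injectivity of $f$ once more. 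Since $A\cap B\subset A$ and $z=f.x$ is supported on $f(A\cap B)$, Lemma~\ref{lem:agreeSupp}-(\ref{item:aS-preimage}) yields that $x$ is supported on the co-infinite set $C\mathrel{:=}A\cap B$, and the same argument applied to $y$ shows that $y$ is supported on $C$ as well.

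It then remains to invert $f$ on $C$. Concretely, I would construct $g\in\M$ with $g(f(c))=c$ for all $c\in C$: the assignment $f(c)\mapsto c$ is a well-defined injection $f(C)\to\omega$ with image $C$ (as $f|_C$ is injective), and since $C$ is co-infinite its complement $C^c$ is infinite, so the remaining elements $\omega\setminus f(C)$ can be sent by an arbitrary injection into $C^c$. The resulting $g$ is injective because its two pieces have disjoint images $C$ and a subset of $C^c$; thus $g\in\M$ and $gf$ agrees with $\id$ on $C$. As $x$ and $y$ are supported on the co-infinite set $C$, Lemma~\ref{lem:agreeSupp}-(\ref{item:aS-through-A}) gives $(gf).x=x$ and $(gf).y=y$, whence
\[
x=(gf).x=g.(f.x)=g.(f.y)=(gf).y=y,
\]
as desired.

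I expect the construction of the partial inverse $g$ to be the crux, and this is exactly where mildness enters. One cannot hope to invert $f$ on $A\cup B$ directly: that set may be all of $\omega$ (e.g.\ if $A$ and $B$ are the even and odd numbers), leaving no room to extend $g$ injectively over the complement, and indeed $f$ itself need not be invertible in $\M$ at all. The reduction to the single co-infinite support $C=A\cap B$, via Proposition~\ref{prop:capSupp} together with Lemma~\ref{lem:agreeSupp}-(\ref{item:aS-preimage}), is precisely what frees up the infinite room in $C^c$ needed to build $g$; this is the co-infinite-support analogue of the finite-support argument used in the tame case \cite[Proposition~2.7]{schwede}.
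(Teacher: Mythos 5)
Your proof is correct and follows essentially the same route as the paper: reduce to a common co-infinite support $C=A\cap B$ via Proposition~\ref{prop:capSupp} and Lemma~\ref{lem:agreeSupp}, then use the infinite complement of $C$ to undo $f$. The only cosmetic difference is that you build a one-sided inverse $g$ with $gf|_C=\id_C$, whereas the paper picks a bijection $h\in\M$ agreeing with $f$ on $C$ and applies $h^{-1}$; both exploit exactly the same room in $C^c$.
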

	\begin{proof}
		Let $ x,y\in X $ with $ f.x = f.y $, and pick co-infinite sets $A_x, A_y$ with $x$ supported on $A_x$ and $y$ supported on $A_y$. Then $ f.x = f.y $ is supported on $ f(A_x)\cap f(A_y) = f(A_x\cap A_y) $ by Lemma \ref{lem:agreeSupp}-(\ref{item:aS-action}) together with Proposition \ref{prop:capSupp}. As an upshot, both $x$ and $y$ are supported on the \emph{same} co-infinite set $A\mathrel{:=} A_x\cap A_y$ by Lemma~\ref{lem:agreeSupp}-(\ref{item:aS-preimage}). We now simply pick a bijective $h\in\mathcal M$ agreeing with $f$ on $A$. Then $x=h^{-1}f.x=h^{-1}f.y=y$ as claimed.
	\end{proof}

	Finally, the same argument as in the tame case \cite[Lemma~1.3.13]{lenzGglobal} shows:

	\begin{lem}\label{lemma:complement}
		Let $X$ be a mild $\mathcal M$-set and let $Y\subset X$ be an $\mathcal M$-subset. Then its complement $X\setminus Y$ is again an $\mathcal M$-subset.\qed
	\end{lem}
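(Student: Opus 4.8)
The plan is to verify directly that $X\setminus Y$ is closed under the $\mathcal M$-action, i.e.\ that for every $x\in X\setminus Y$ and every $f\in\mathcal M$ we have $f.x\in X\setminus Y$. It is cleaner to argue by contraposition and instead prove the implication: if $f.x\in Y$, then already $x\in Y$. The conceptual heart of the matter is that on a mild $\mathcal M$-set the action of an \emph{arbitrary} injection $f$ can be replaced, as far as its effect on $x$ is concerned, by the action of a genuine \emph{bijection}; this is exactly where the mildness hypothesis is used, and it is what lets us ``undo'' the action despite $\mathcal M$ being merely a monoid.

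In detail, I would first invoke mildness to choose a co-infinite set $A\subset\omega$ on which $x$ is supported. Since $A$ is co-infinite, the construction carried out in the proof of Lemma~\ref{lem:agreeSupp} yields a bijection $h\in\mathcal M$ that agrees with $f$ on $A$. By Lemma~\ref{lem:agreeSupp}-(\ref{item:aS-through-A}) it then follows that $f.x=h.x$, so we have traded the possibly non-surjective $f$ for the bijection $h$ without changing the relevant value.

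Now assume $f.x\in Y$, so that $h.x\in Y$ as well. Because $h$ is a bijection of $\omega$, its inverse $h^{-1}$ is again injective and therefore also an element of $\mathcal M$. As $Y$ is an $\mathcal M$-subset it is in particular closed under the action of $h^{-1}$, whence $x=h^{-1}.(h.x)\in Y$. This establishes the contrapositive implication, and consequently $f.x\in X\setminus Y$ whenever $x\in X\setminus Y$, so $X\setminus Y$ is an $\mathcal M$-subset.

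I do not expect a genuine obstacle here: the only substantive step is the passage from $f$ to the bijection $h$, after which invariance of $Y$ under $h^{-1}$ finishes the argument with no further work. The single point that must be kept in mind is that this reduction to a bijection genuinely needs the support set $A$ to be co-infinite, which is precisely the mildness assumption; this is the direct analogue of the finiteness used in the tame case \cite[Lemma~1.3.13]{lenzGglobal}, and it is also why the statement cannot be expected to hold for arbitrary (non-mild) $\mathcal M$-sets, whose elements need not admit such a co-infinite support.
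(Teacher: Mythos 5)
Your argument is correct and is exactly the intended one: the paper omits the proof, citing that ``the same argument as in the tame case'' applies, and that argument is precisely your reduction of the action of an arbitrary $f$ on an element supported on a co-infinite set to the action of a bijection $h$ agreeing with $f$ there (via Lemma~\ref{lem:agreeSupp}-(\ref{item:aS-through-A})), followed by applying $h^{-1}\in\mathcal M$ and invariance of $Y$. Your closing remark correctly identifies co-infiniteness of the support as the point where mildness enters, mirroring the role of finiteness in the tame case.
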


	Beware that this is not true for general $\mathcal M$-sets, see the discussion after \emph{loc.~cit.}

	\section{Simplicial \texorpdfstring{$*$}{*}-modules}\label{sec:*-mod}
	\subsection{\texorpdfstring{$\bm{E\mathcal M}$}{EM}-simplicial sets}
	In this section, we will discuss $ \EM $-simplicial sets and the analogues of notions of tameness and mildness for them. Moreover, we will introduce a box product for mild $E\mathcal M$-simplicial sets generalizing \cite[Definition~2.1.8]{lenzGglobal} and reexpress it as a suitable operadic product.

	\begin{defn}
		We write \emph{$ \EM $} for the simplicial monoid given in degree $ n $ by $ (\EM)_n = \M^{1+n} \cong \maps(\{0,\dots,n\},\M) $, with pointwise multiplication and structure maps via precomposition. An \emph{$ \EM $-simplicial set} is a simplicial set with left $ \EM $-action.
		By $ \EMSSet $ we denote the category of $ \EM $-simplicial sets together with $ \EM $-equivariant simplicial maps.
	\end{defn}

	\begin{ex}\label{ex:EX}
		Let $ X $ be any $ \M $-set. Then we get an $ \EM $-simplicial set $ EX $ given in degree $ n $ by $ EX_n = X^{1+n} \cong \maps(\{0,\dots,n\},X) $, with structure maps again given by precomposition and with pointwise $ \EM $-action. In particular, we have an $E\mathcal M$-simplicial set $E\Inj(A,\omega)$ for any set $A$, with $E\mathcal M$-action via postcomposition.
	\end{ex}

	\begin{defn}
		Let $ X $ be an $ \EM $-simplicial set, $ x\in X_n $, $ A \subset \omega $, and $ 0\leq k\leq n $. Then we say that $ x $ is \emph{$ k $-supported on $ A $} if $ i_k(u).x = x $ for all $ u\in \M_A $ where $ i_k\colon\mathcal M\to\mathcal M^{1+n} =(E\mathcal M)_n $ is the inclusion of the $ (1+k) $-th factor.
	\end{defn}

	\begin{ex}\label{ex:EX-supp}
		Let $X$ be an $\mathcal M$-set. Then an $n$-simplex $(x_0,\dots,x_n)$ of $EX$ is $k$-supported on $A\subset\omega$ if and only if $x_k$ is supported on $A$ in the sense of Definition~\ref{def:supported-M}.
	\end{ex}

	In order for the above notions to be useful, they should of course interact reasonably with the $E\mathcal M$-action and the simplicial structure maps. We begin with the following analogue of Lemma~\ref{lem:agreeSupp}-(\ref{item:aS-action}):

	\begin{lem} \label{lem:ksupp}
		Let $ X $ be an $ \EM $-simplicial set, $ x\in X_n $, and $ u_0, \dots ,u_n \in \M $. Then $ (u_0, \dots ,u_n).x $ is $ k $-supported on $ \im(u_k) $. Moreover, if $ x $ is $ k $-supported on some co-infinite set $ A $, then $ (u_0, \dots ,u_n).x $ is $ k $-supported on $ u_k (A) $.
	\end{lem}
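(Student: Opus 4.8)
The plan is to unfold both claims into statements about single-slot modifications of the acting element $(u_0,\dots,u_n)$, using that multiplication in $(E\mathcal M)_n=\mathcal M^{1+n}$ is pointwise and that each $i_k$ is a monoid homomorphism. Throughout I write $y\mathrel{:=}(u_0,\dots,u_n).x$. For the first claim, let $v\in\mathcal M_{\im(u_k)}$. Since $i_k(v)$ is the tuple with $v$ in the slot of $u_k$ and identities elsewhere, pointwise multiplication gives $i_k(v)\cdot(u_0,\dots,u_n)=(u_0,\dots,u_{k-1},vu_k,u_{k+1},\dots,u_n)$, and $vu_k=u_k$ because $v$ fixes $\im(u_k)$ elementwise. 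Hence $i_k(v).y=y$, which is exactly $k$-support of $y$ on $\im(u_k)$; note that this part needs no co-infiniteness.

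For the second claim I would take $v\in\mathcal M_{u_k(A)}$ and reduce, by the same pointwise computation, to showing that replacing $u_k$ by $vu_k$ in the $k$-th slot leaves the action on $x$ unchanged, i.e.\ $(u_0,\dots,vu_k,\dots,u_n).x=y$. The key observation is that $vu_k$ and $u_k$ agree on $A$ (for $a\in A$ we have $u_k(a)\in u_k(A)$, so $vu_k(a)=u_k(a)$), so this is essentially Lemma~\ref{lem:agreeSupp}-(\ref{item:aS-through-A}) transported into the $k$-th slot. To make that precise I would first note that $u_k(A)$ is co-infinite, since its complement contains the infinite set $u_k(\omega\setminus A)$; then $u_k|_A$ is a bijection between the co-infinite sets $A$ and $u_k(A)$ and extends to a bijection $h\in\mathcal M$ with $h|_A=u_k|_A$. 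Now both $h^{-1}u_k$ and $h^{-1}vu_k$ fix $A$ elementwise, so they lie in $\mathcal M_A$; factoring $i_k(u_k)=i_k(h)\,i_k(h^{-1}u_k)$ and using $k$-support of $x$ on $A$ gives $i_k(u_k).x=i_k(h).x$, and likewise $i_k(vu_k).x=i_k(h).x$. Splitting off the unchanged slots as a left factor $g=(u_0,\dots,u_{k-1},\id,u_{k+1},\dots,u_n)$ and applying it to the resulting equality $i_k(u_k).x=i_k(vu_k).x$ then yields the claim.

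The first claim is essentially formal, so the real work is in the second. The main obstacle is that one cannot invoke the $k$-support hypothesis on $x$ directly: the element $v$ (equivalently, $vu_k$) need not fix anything relevant on the nose, it only agrees with $u_k$ on $A$. The whole point is therefore to convert this ``agreement on $A$'' into membership in $\mathcal M_A$ by absorbing a bijection $h$, which is legitimate precisely because $i_k$ is a monoid homomorphism and the remaining slots commute past as the left factor $g$. The co-infiniteness of $A$ is exactly what guarantees that such an $h$ exists.
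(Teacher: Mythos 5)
Your proof is correct and follows essentially the same route as the paper: the first claim via the pointwise-multiplication identity $i_k(v)\cdot(u_0,\dots,u_n)=(u_0,\dots,vu_k,\dots,u_n)$, and the second by splitting off $i_k(u_k)$ and using that $vu_k$ and $u_k$ agree on the co-infinite set $A$. The only cosmetic difference is that the paper simply invokes Lemma~\ref{lem:agreeSupp}-(\ref{item:aS-through-A}) for the $\mathcal M$-action on $X_n$ through $i_k$, whereas you re-derive that step inline by absorbing the bijection $h$.
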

	\begin{proof}
		For any $g\in\mathcal M$ we have
		\begin{equation}\label{eq:ksupp-rewrite}
			i_k(g).(u_0,\dots,u_n).x=(u_0,\dots,u_{k-1},gu_k,u_{k+1},\dots,u_n).x.
		\end{equation}
		If now $g\in\mathcal M_{\im(u_k)}$, then $gu_k=u_k$, proving the first statement.

		For the second statement we further rewrite the right hand side of $(\ref{eq:ksupp-rewrite})$ as $$(u_0,\dots,u_{k-1},1,u_{k+1},\dots,u_n).i_k(gu_k).x.$$ If now $g\in\mathcal M_{u_k(A)}$, then $gu_k|_A=u_k|_A$, so Lemma~\ref{lem:agreeSupp}-(\ref{item:aS-action}) shows that $i_k(gu_k).x=i_k(u_k).x$, and the claim follows.
	\end{proof}

	\begin{prop} \label{prop:f(k)supp}
		Let $ X $ be an $ \EM $-simplicial set, let $ f\colon[m]\to [n] $ in $ \Delta $, and let $ 0\leq k \leq m $. Assume $ x\in X_n $ is $ f(k) $-supported on some co-infinite $ A\subset \omega $. Then $ f^* x $ is $ k $-supported on $ A $.
	\end{prop}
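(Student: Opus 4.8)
The plan is to prove the statement directly for arbitrary $f$, \emph{without} first reducing to faces and degeneracies, by combining the naturality of the $\EM$-action with the two structural results about $k$-support already at our disposal: the \emph{unconditional} first half of Lemma~\ref{lem:ksupp} and the intersection property of Proposition~\ref{prop:capSupp}. Throughout I would write $y = f^*x\in X_m$ and view $X_m$ as an $\M$-set via $i_k\colon\M\to(\EM)_m$, so that ``$y$ is $k$-supported on $S$'' is literally ``$y$ is supported on $S$'' for this $\M$-action.

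First I would record the key consequence of naturality. Fix any $b\in\M_A$. Since $x$ is $f(k)$-supported on $A$, we have $i_{f(k)}(b).x = x$, and applying $f^*$ together with $\EM$-equivariance gives
\[
  y = f^*x = f^*\bigl(i_{f(k)}(b).x\bigr) = \bigl(f^*i_{f(k)}(b)\bigr).y .
\]
A direct computation of the structure map shows $f^*i_{f(k)}(b) = \prod_{j\in f^{-1}(f(k))} i_j(b)$, i.e.\ the element of $(\EM)_m$ whose $j$-th component is $b$ for $j\in f^{-1}(f(k))$ and the identity otherwise; in particular its $k$-th component is $b$, as $k$ lies in the fibre over $f(k)$. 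Lemma~\ref{lem:ksupp} then shows that this element applied to $y$ — which is just $y$ again — is $k$-supported on $\im(b)$. Thus $y$ is $k$-supported on $\im(b)$ for \emph{every} $b\in\M_A$.

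The final step is to shrink these supports down to $A$ via Proposition~\ref{prop:capSupp}. Since $A$ is co-infinite I can split $A^c$ as a disjoint union $C_1\sqcup C_2$ of two infinite sets and pick injections $b,b'\in\M$ fixing $A$ pointwise with $b(A^c)\subseteq C_1$ and $b'(A^c)\subseteq C_2$; then $b,b'\in\M_A$, both images $\im(b)=A\sqcup b(A^c)$ and $\im(b')=A\sqcup b'(A^c)$ are co-infinite, and $\im(b)\cap\im(b')=A$. By the previous step $y$ is $k$-supported on both $\im(b)$ and $\im(b')$, so Proposition~\ref{prop:capSupp} (for the $i_k$-action on $X_m$) yields that $y$ is $k$-supported on $\im(b)\cap\im(b')=A$, as desired.

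I expect the main conceptual obstacle to be exactly the reason the naïve argument fails: when $f$ is not injective one cannot realize $i_k(u)$ itself as $f^*\psi$ for any $\psi$ fixing $x$, since every such pullback necessarily carries the same injection in \emph{all} coordinates of the fibre $f^{-1}(f(k))$; the naturality argument above therefore only produces the ``diagonal'' relation $y=\bigl(\prod_{j}i_j(b)\bigr).y$ rather than coordinatewise fixedness. The resolution — and the step requiring the most care — is to not aim for $A$ directly but to first extract the a priori larger co-infinite support $\im(b)$ from the unconditional part of Lemma~\ref{lem:ksupp}, and only then intersect two such supports down to $A$. This is also precisely where co-infiniteness of $A$ is indispensable, both to invoke the intersection principle and to have enough room in $A^c$ to arrange $\im(b)\cap\im(b')=A$.
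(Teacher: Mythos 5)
Your proof is correct, but it takes a genuinely different route from the paper's. The paper writes $f^{-1}(f(k))=\{k_0,\dots,k_r\}$, packages the action of $\mathcal M^{1+r}$ on $f^*x$ through the fibre into a map $\alpha\colon\mathcal M^{1+r}\to X_m$, shows $\alpha$ descends to the orbit set $\mathcal M^{1+r}/\mathcal M_A$, and then concludes via Proposition~\ref{prop:equalInM/M_A} that $[u,1,\dots,1]=[1,\dots,1]$ there for $u\in\mathcal M_A$; this route runs through the combinatorial Propositions~\ref{prop:infiniteCompl} and~\ref{prop:equalInM/M_A}. You instead extract from the diagonal relation $y=\bigl(\prod_{j\in f^{-1}(f(k))}i_j(b)\bigr).y$ and the unconditional first half of Lemma~\ref{lem:ksupp} that $y$ is $k$-supported on $\im(b)$ for \emph{every} $b\in\mathcal M_A$, and then intersect two judiciously chosen such images down to $A$ via Proposition~\ref{prop:capSupp}. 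Each step checks out: $f^*i_{f(k)}(b)$ is indeed the tuple with $b$ in the coordinates of the fibre and the identity elsewhere, the first half of Lemma~\ref{lem:ksupp} requires no support hypothesis on $y$, and your $b,b'$ have co-infinite images meeting exactly in $A$, so the intersection principle applies to the $\mathcal M$-set $i_k^*X_m$. What your argument buys is independence from Propositions~\ref{prop:infiniteCompl} and~\ref{prop:equalInM/M_A} (which the paper still needs elsewhere, e.g.\ for the characterization of mildness of $X$ via mildness of the $X_n$), at the cost of leaning on Proposition~\ref{prop:capSupp}, whose proof in the co-infinite setting is the most delicate point of Section~\ref{sec:M-sets}; the paper's quotient argument, by contrast, generalizes verbatim from the tame case and keeps this proposition independent of the intersection property.
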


	The proof of this is considerably harder. We begin with two preparatory results:

	\begin{prop} \label{prop:infiniteCompl}
		Let $ {A\subset \omega} $ co-infinite and $ {u_0,\dots, u_n \in \M} $ with $\bigcup_{k=0}^n u_k(A)$ co-infinite. Then there exists a $ \chi \in \M_A $ such that also $ \bigcup_{k=0}^n\im(u_k \chi) $ is co-infinite.
	\end{prop}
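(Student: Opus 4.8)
The plan is to reduce the statement to a purely combinatorial selection problem and then solve it by a greedy back-and-forth construction. The key initial observation is that any $\chi\in\M_A$ fixes $A$ pointwise, so $\chi(A)=A$ and, by injectivity, $\chi(A^c)\subset A^c$ where I abbreviate $A^c\mathrel{:=}\omega\setminus A$. Hence
\[
\im(u_k\chi)=u_k\bigl(\chi(A)\cup\chi(A^c)\bigr)=u_k(A)\cup u_k\bigl(\chi(A^c)\bigr),
\]
so that $\bigcup_{k=0}^n\im(u_k\chi)=B\cup\bigcup_{k=0}^n u_k(C)$, where $B\mathrel{:=}\bigcup_{k=0}^n u_k(A)$ is co-infinite by hypothesis and $C\mathrel{:=}\chi(A^c)$. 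Since $A$ is co-infinite, $A^c$ is countably infinite, and choosing $\chi$ amounts to choosing an infinite subset $C\subset A^c$ together with a bijection $A^c\to C$ (which one then extends by the identity on $A$ to obtain an element of $\M_A$). The task thus becomes: find an infinite $C\subset A^c$ for which $B\cup\bigcup_{k=0}^n u_k(C)$ is still co-infinite.

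To produce such a $C$ I would build it together with an infinite \emph{witness set} $D\subset B^c$ certifying the co-infiniteness. Concretely, I construct sequences of pairwise distinct elements $c_1,c_2,\dots\in A^c$ and $d_1,d_2,\dots\in B^c$ subject to the single family of constraints $u_k(c_i)\ne d_j$ for all $i,j$ and all $0\le k\le n$. Setting $C\mathrel{:=}\{c_1,c_2,\dots\}$ and $D\mathrel{:=}\{d_1,d_2,\dots\}$, both are infinite, and $D$ is disjoint from $B$ (as $D\subset B^c$) and from every $u_k(C)$ (by the constraints); hence $D\subset\omega\setminus\bigl(B\cup\bigcup_{k=0}^n u_k(C)\bigr)$, which is therefore infinite, exactly as required.

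The construction itself proceeds by interleaving the two sequences. Having chosen $c_1,\dots,c_m$ and $d_1,\dots,d_m$, I first pick $c_{m+1}\in A^c$ outside the finite set $\{c_1,\dots,c_m\}\cup\{u_k^{-1}(d_j):0\le k\le n,\ 1\le j\le m\}$ --- possible since $A^c$ is infinite, and each preimage $u_k^{-1}(d_j)$ consists of at most one point by injectivity of $u_k$ --- thereby securing $u_k(c_{m+1})\ne d_j$ for the existing $d_j$. Then I pick $d_{m+1}\in B^c$ outside the finite set $\{d_1,\dots,d_m\}\cup\{u_k(c_i):0\le k\le n,\ 1\le i\le m+1\}$, possible since $B^c$ is infinite, securing $d_{m+1}\ne u_k(c_i)$ for all $c_i$ chosen so far. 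The one point requiring genuine care, rather than routine bookkeeping, is exactly this ordering: by always introducing the new $c$-point before the new $d$-point at each stage, every inequality $u_k(c_i)\ne d_j$ gets enforced exactly once --- while choosing $c_i$ if $j<i$, and while choosing $d_j$ if $i\le j$ --- so that no constraint is ever missed. Since the two reservoirs $A^c$ and $B^c$ supplied precisely by the two co-infiniteness hypotheses are infinite, the process never gets stuck, and the resulting $C$ and $D$ complete the argument.
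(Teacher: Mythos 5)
Your proof is correct and follows essentially the same strategy as the paper's: both arguments produce, by an interleaved greedy construction, an infinite subset of $\omega\setminus A$ serving as the image of $\chi$ on $A^c$ together with an infinite witness set inside $\omega\setminus\bigcup_{k}u_k(A)$ that avoids all the sets $u_k(C)$, thereby certifying co-infiniteness. The only difference is presentational: the paper packages the back-and-forth as increasing chains $B_0\subsetneq B_1\subsetneq\cdots$ and $C_0\subsetneq C_1\subsetneq\cdots$ and defers their construction to a cited reference, whereas you carry out the diagonal selection explicitly.
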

	\begin{proof}
		This is very similar to the case for finite $A$ proven in \cite[Proposition 1.3.18]{lenzGglobal}, so we will only indicate where the argument given there has to be adapted.

		Write $A'\mathrel{:=}\bigcup_{k=0}^nu_k(A)$. Arguing as in \emph{loc.~cit.} we can find strictly increasing chains $ B_0 \subsetneq B_1 \subsetneq \dots $ and $ C_0 \subsetneq C_1 \subsetneq \dots $ of subsets of the infinite sets $ \omega\setminus A $ and $ \omega \setminus A' $, respectively, such that for all $ j\geq0 $
		\begin{equation}\label{eq:C_jnB_j}
		C_j \cap \bigcup_{k=0}^n u_k (B_j) = \emptyset.
		\end{equation}

		We will now explain how this proves the proposition. We set $ B_\infty := \bigcup_{j=0}^\infty B_j $ and $ C_\infty := \bigcup_{j=0}^\infty C_j $. These are clearly infinite sets; moreover, $ u_k(B_\infty) \cap C_\infty=\emptyset $ for all $k$ by property \eqref{eq:C_jnB_j}, while $B_\infty\cap A=\emptyset$ by construction.

		We can then find an injection $\chi'\colon\omega\setminus A\to B_\infty$ as the right hand side is infinite, which we can then further extend via the identity of $A$ to $\chi\colon\omega\to A\cup B_\infty$; note that $\chi$ is again injective as $A\cap B_\infty=\emptyset$. We now simply decompose \[\bigcup_{k=0}^n \im(u_k \chi) = \underbrace{\bigcup_{k=0}^n u_k (A)}_{=A'} \cup \underbrace{\bigcup_{k=0}^n u_k (B_\infty)}_{=:B'}\] and note that both $A'$ and $B'$ are disjoint from the infinite set $C_\infty$ by construction and the above observation, respectively, so that the left hand side is co-infinite as desired.
	\end{proof}

	\begin{prop} \label{prop:equalInM/M_A}
		Let $ A\subset \omega $ co-infinite, and let $ (u_0,\dots,u_n)$, $ (v_0,\dots,v_n)\in \M^{1+n} $ such that $ u_k|_A = v_k|_A $ for $ k=0,\dots,n $ and $ \bigcup_{k=0}^n u_k (A) $ is again co-infinite. Then $ [u_0,\dots,u_n] = [v_0,\dots,v_n] $ in $ \M^{1+n}/\M_A $.
	\end{prop}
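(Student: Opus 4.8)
The plan is to unwind the quotient $\M^{1+n}/\M_A$ as the coequalizer of the right diagonal $\M_A$-action, so that $[u_0,\dots,u_n]=[v_0,\dots,v_n]$ holds precisely when the two tuples can be joined by a zigzag of right $\M_A$-translations. It is tempting to look for a common right multiple, i.e.\ $\chi,\chi'\in\M_A$ with $u_k\chi=v_k\chi'$ for all $k$; however, this generally fails (already for $n=0$ one finds $u,v$ agreeing on $A$ but admitting no common right multiple for parity reasons). Instead I would join $u$ and $v$ through a common \emph{source}, realizing both as right $\M_A$-translates of a suitable intermediate tuple. The key preliminary observation is that replacing $(u_0,\dots,u_n)$ by $(u_0\chi,\dots,u_n\chi)$ for any $\chi\in\M_A$ alters neither the class in $\M^{1+n}/\M_A$ nor the restrictions to $A$, since $\chi$ fixes $A$ pointwise.

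First I would enlarge the complement of the images. Applying Proposition~\ref{prop:infiniteCompl} to the concatenated tuple $(u_0,\dots,u_n,v_0,\dots,v_n)$, whose union of restriction-images $\bigcup_k u_k(A)\cup\bigcup_k v_k(A)=\bigcup_k u_k(A)$ is co-infinite by hypothesis, produces a single $\chi^*\in\M_A$ for which the combined image $W\mathrel{:=}\bigcup_k\im(u_k\chi^*)\cup\bigcup_k\im(v_k\chi^*)$ is co-infinite. Replacing $u$ and $v$ by $u\chi^*$ and $v\chi^*$, I may therefore assume $W$ co-infinite while retaining $u_k|_A=v_k|_A$ for all $k$. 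Next I would introduce a common target tuple $w=(w_0,\dots,w_n)$ by setting $w_k|_A\mathrel{:=}u_k|_A$ and letting $w_k|_{A^c}$ be any injection $A^c\to W^c$ (possible as $W^c$ is infinite); since $u_k(A)\subset W$ while $w_k(A^c)\subset W^c$, each $w_k$ is injective, it agrees with both $u$ and $v$ on $A$, and crucially $w_k(A^c)$ is disjoint from both $u_k(A^c)$ and $v_k(A^c)$, which lie in $W$.

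The final ingredient is a one-step gluing lemma: whenever two tuples $p,q$ satisfy $p_k|_A=q_k|_A$ and $p_k(A^c)\cap q_k(A^c)=\emptyset$ for every $k$, they define the same class. To see this, fix once and for all $\alpha,\beta\in\M_A$ that are the identity on $A$ and restrict to bijections from $A^c$ onto complementary infinite halves $P,Q$ with $A^c=P\sqcup Q$, and define a common source by $s_k|_A\mathrel{:=}p_k|_A$, $s_k|_P\mathrel{:=}p_k\circ\alpha^{-1}$, and $s_k|_Q\mathrel{:=}q_k\circ\beta^{-1}$. The three image pieces $p_k(A)=q_k(A)$, $p_k(A^c)$, $q_k(A^c)$ are pairwise disjoint exactly under the stated hypotheses, so each $s_k$ is injective, and by construction $s_k\alpha=p_k$ and $s_k\beta=q_k$, whence $[p]=[s]=[q]$. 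Applying this lemma to the pairs $(u,w)$ and $(w,v)$ (whose disjointness hypotheses hold because $w_k(A^c)\subset W^c$) yields $[u]=[w]=[v]$, and undoing the reduction by $\chi^*$ gives the claim for the original tuples.

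The main obstacle is conceptual rather than computational: one must recognize that a direct common right multiple need not exist, so the identification has to be witnessed by a zigzag through a common source, and then that the injectivity of this source is governed precisely by the disjointness of the off-$A$ images. This is what forces the preliminary step, and in particular explains why Proposition~\ref{prop:infiniteCompl} must be invoked for the \emph{doubled} tuple rather than for $u$ and $v$ separately: it is the co-infiniteness of the \emph{combined} image $W$, not of $\bigcup_k\im(u_k)$ and $\bigcup_k\im(v_k)$ individually, that leaves enough room to build a single $w$ simultaneously separated from both.
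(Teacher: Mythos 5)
Your argument is correct and is in essence exactly the proof that the paper outsources to \cite[Proposition~1.3.19]{lenzGglobal}: first create room by a right translation with an element of $\M_A$ furnished by Proposition~\ref{prop:infiniteCompl}, then join the two tuples through a common source built by splitting $A^c$ into two infinite halves and checking that injectivity of the glued maps is governed by disjointness of the off-$A$ images. Your one genuine adaptation --- invoking Proposition~\ref{prop:infiniteCompl} for the concatenated $(2n+2)$-tuple, whose hypothesis holds because $u_k(A)=v_k(A)$, so that a single intermediate $w$ can be separated from the images of $u\chi^*$ and $v\chi^*$ simultaneously --- is precisely the point of the paper's phrase ``appealing to the previous proposition instead,'' and all the disjointness verifications you supply are correct.
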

	\begin{proof}
		One argues precisely as in the case of finite $A$ {\cite[Proposition~1.3.19]{lenzGglobal}}, appealing to the previous proposition instead of \cite[Proposition~1.3.18]{lenzGglobal}.
	\end{proof}

	\begin{proof}[Proof of Proposition~\ref{prop:f(k)supp}]
		We write $ f^{-1}(f(k)) \mathrel{=:} \{k_0,k_1\dots,k_r\} $ with pairwise distinct $k_i$ such that $k_0=k$, and we let $ i\colon \M^{1+r} \to \M^{1+m}$ denote the homomorphism $i(u_0,\dots,u_r) = i_{k_0}(u_0) \dotsm i_{k_r}(u_r) $.
		We now consider the map of sets \[\alpha \colon \M^{1+r} \to X_m, \ (u_0,\dots,u_r)\mapsto i(u_0,\dots, u_r).f^* x.\]
		Then for all $ v\in \M_A $ we find
		\begin{align*}
		\alpha(u_0 v,\dots, u_r v) & = i(u_0 v,\dots,u_r v).f^*x\\
		& = i(u_0,\dots,u_r).i(v,\dots,v).f^*x\\
		& = i(u_0,\dots,u_r).f^*(i_{f(k)}(v).x)\\
		& = i(u_0,\dots,u_r).f^* x = \alpha(u_0,\dots ,u_r),
		\end{align*}
		so $ \alpha $ factors through $\tilde\alpha\colon \M^{1+r} / \M_A\to X_m$. For any $u\in\mathcal M_A$ the previous proposition therefore shows that $i_{k}(u).f^*x = \tilde\alpha[u,1,\dots,1]=\tilde\alpha[1,\dots,1]=f^*x$.
	\end{proof}

	\subsection{Tameness, mildness, and the box product}
	We now come to the analogues of the notions of tameness and mildness for $ \EM $-simplicial sets:

	\begin{defn}
		Let $ X $ be an $ \EM $-simplicial set, $n\ge 0$, and let $x\in X_n$. We call $x$ \emph{finitely supported} if for every $0\le k\le n$ there exists a finite $A_k\subset\omega$ such that $x$ is $k$-supported on $A$. Similarly, we will say that $x$ is \emph{co-infinitely supported} if for every $0\le k\le n$ there exists a co-infinite $A_k\subset\omega$ on which $x$ is $k$-supported.

		We call $ X $ \emph{tame} if all its simplices are finitely supported, and \emph{mild} if all its simplices are co-infinitely supported. We write $\cat{$\bm{E\mathcal M}$-SSet}^\tau$ and $\cat{$\bm{E\mathcal M}$-SSet}^\mu$ for the full subcategories spanned by the tame and mild $E\mathcal M$-simplicial sets, respectively.
	\end{defn}

	\begin{warn}
		Let $X$ be an $E\mathcal M$-simplicial set and let $x\in X_n$. If $A\subset\omega$ is a finite set, then one can show \cite[Lemma~2.1.7]{lenzGglobal} that $x$ is $k$-supported on $A$ for all $0\le k\le n$ if and only if it is supported on $A$ with respect to the \emph{diagonal} $\mathcal M$-action on $X_n$; in particular, the notion of tameness only depends on the underlying $\M$-simplicial set of $X$.

		This is \emph{not} true in the co-infinite setting; in particular, $E(\mathcal M^\mu)$ is a mild $E\mathcal M$-simplicial set, but the set $E(\mathcal M^\mu)_1$ of edges is not mild as an $\mathcal M$-set: for example, if $f(x)=2x, g(x)=2x+1$, then $(f,g)$ is not supported on any co-infinite $A$. However, Proposition~\ref{prop:equalInM/M_A} at least shows that conversely $X$ is mild in the above sense if each $X_n$ is mild as an $\mathcal M$-set.
	\end{warn}

	\begin{warn}
		Similarly to the previous counterexample, the cartesian product of two mild $E\mathcal M$-simplicial sets need not be mild anymore. While the category $\cat{$\bm{E\mathcal M}$-SSet}^\mu$ has all small limits (see Lemma~\ref{lemma:tau-mu} or Theorem~\ref{thm:mild-presentable}) and hence in particular binary products, we will always use `$\times$' to denote the usual cartesian product (i.e.~the categorical product in $\cat{$\bm{E\mathcal M}$-SSet}$) below.
	\end{warn}

	\begin{const}
		Let $X$ be an $E\mathcal M$-simplicial set. For any $n\ge 0$ we write
		\begin{align*}
			(X^\tau)_n &= \{\text{$x\in X_n$ finitely supported}\}\\
			(X^\mu)_n &= \{\text{$x\in X_n$ co-infinitely supported}\}.
		\end{align*}
	\end{const}

	\begin{lem}\label{lemma:tau-mu}
		\begin{enumerate}
			\item $X^\tau$ is a tame $E\mathcal M$-simplicial subset of $X$ and this defines a functor $ (-)^{\tau}\colon \EMSSet \to \EMSSett $ right adjoint to the inclusion. In particular, the full subcategory $ \EMSSett\subset\EMSSet $ is closed under all colimits.\label{item:tm.-tameColim}
			\item $X^\mu$ is a mild $E\mathcal M$-simplicial subset of $X$ and this construction defines a functor $(-)^{\mu}\colon\EMSSet\to\EMSSetm$ right adjoint to the inclusion. In particular, the full subcategory $\EMSSetm\subset\EMSSet$ is closed under all colimits.\label{item:tm-mildFunctor}
		\end{enumerate}
	\end{lem}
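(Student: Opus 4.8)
The plan is to prove both parts simultaneously, since the arguments for $(-)^\tau$ and $(-)^\mu$ are verbatim the same except that ``finite'' is replaced by ``co-infinite'' throughout. The heavy combinatorial lifting has already been done in Lemma~\ref{lem:ksupp} and Proposition~\ref{prop:f(k)supp}, so what remains is largely assembly; I will spell out the tame case and flag the single point where the co-infinite case needs an extra observation.

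\textbf{Step 1 (subcomplex).} First I would check that the sets $(X^\tau)_n$ (resp.\ $(X^\mu)_n$) assemble into an $E\mathcal M$-simplicial subset. Closure under the simplicial structure maps is immediate from Proposition~\ref{prop:f(k)supp}: if $x\in X_n$ is finitely (resp.\ co-infinitely) supported and $f\colon[m]\to[n]$ is any map in $\Delta$, then for each $0\le k\le m$ the simplex $x$ is $f(k)$-supported on some finite (resp.\ co-infinite) $A_{f(k)}$, so $f^*x$ is $k$-supported on that same set; hence $f^*x$ is again finitely (resp.\ co-infinitely) supported. Closure under the $E\mathcal M$-action comes from the second statement of Lemma~\ref{lem:ksupp}: if $x$ is $k$-supported on $A$, then $(u_0,\dots,u_n).x$ is $k$-supported on $u_k(A)$. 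In the tame case $u_k(A)$ is finite because $u_k$ is injective, and that is all one needs. The \emph{one point requiring genuine (if short) care} is the mild case: I must know that $u_k(A)$ stays \emph{co-infinite}, which follows from injectivity of $u_k$, since $u_k(\omega\setminus A)$ is then an infinite subset of $\omega\setminus u_k(A)$. Note that the \emph{first} statement of Lemma~\ref{lem:ksupp} would only give $k$-support on $\im(u_k)$, which can be all of $\omega$, so using the sharper $u_k(A)$ here is essential. Once the subcomplex is established, $X^\tau$ is tame and $X^\mu$ is mild directly by construction.

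\textbf{Step 2 (functoriality and adjunction).} The key observation is that any $E\mathcal M$-equivariant simplicial map $\phi\colon X\to Y$ preserves $k$-support: if $i_k(u).x=x$ for all $u\in\mathcal M_A$, then $i_k(u).\phi(x)=\phi(i_k(u).x)=\phi(x)$, so $\phi(x)$ is $k$-supported on $A$ as well. Consequently $\phi$ carries $X^\tau$ into $Y^\tau$ and $X^\mu$ into $Y^\mu$, which turns $(-)^\tau$ and $(-)^\mu$ into functors. The same observation yields the adjunction with no further work: for a tame $T$, every $\phi\colon T\to X$ has image landing in $X^\tau$ (all simplices of $T$ are finitely supported, hence so are their images), so $\phi$ factors uniquely through the inclusion $X^\tau\hookrightarrow X$. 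This is precisely the universal property exhibiting $(-)^\tau$ as right adjoint to $\incl$; and since $T^\tau=T$ for tame $T$, the unit of the adjunction is an isomorphism, confirming that the inclusion is fully faithful. The mild case is identical.

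\textbf{Step 3 (colimits).} Finally, the closure under colimits is the formal consequence that a coreflective subcategory is closed under colimits. Concretely, since colimits in $\EMSSet$ are computed levelwise on underlying sets, every simplex of a colimit of tame (resp.\ mild) objects is the image, under an equivariant cocone map, of a finitely (resp.\ co-infinitely) supported simplex of the diagram, and is therefore itself finitely (resp.\ co-infinitely) supported by the support-preservation from Step~2. Hence the colimit computed in $\EMSSet$ is again tame (resp.\ mild) and serves as the colimit in the subcategory. As noted above, there is no deep obstacle once Lemma~\ref{lem:ksupp} and Proposition~\ref{prop:f(k)supp} are in hand; the only step demanding an actual argument beyond formal nonsense is the co-infiniteness of $u_k(A)$ in Step~1.
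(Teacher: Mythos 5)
Your proposal is correct and follows essentially the same route as the paper: Proposition~\ref{prop:f(k)supp} for closure under the simplicial operators, the second part of Lemma~\ref{lem:ksupp} for closure under the $E\mathcal M$-action, the observation that equivariant maps preserve $k$-support for functoriality and the adjunction, and the formal fact that coreflective subcategories are closed under colimits. Your explicit check that $u_k(A)$ remains co-infinite is a point the paper leaves implicit, but it is exactly the right thing to verify.
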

	\begin{proof}
		The first part is \cite[Corollary 1.3.23]{lenzGglobal} and its proof. For the second part, we note that $X^\mu$ is a simplicial subset by Proposition~\ref{prop:f(k)supp} and moreover closed under the $E\mathcal M$-action by Lemma~\ref{lem:ksupp}.

		If now $f\colon X\to Y$ is $E\mathcal M$-equivariant, and $x\in X_n$ is $k$-supported on some co-infinite $A\subset\omega$, then clearly also $f(x)$ is $k$-supported on $A$. It follows that $f\colon X\to Y$ restricts to $f^\mu\colon X^\mu\to Y^\mu$ and that for mild $X$ any such $f$ factors (necessarily uniquely) through $Y^\mu$, proving it is the desired right adjoint. As fully faithful left adjoints create colimits, it then follows formally that $\EMSSetm$ is closed under all colimits.
	\end{proof}

	\begin{ex}\label{ex:mildFunctor}
		Let $X$ be an $\mathcal M$-set. Then Example~\ref{ex:EX-supp} shows that $ (EX)^{\mu} = E{(X^{\mu})} $.
	\end{ex}

	We now come to the key feature of tame and mild $E\mathcal M$-simplicial sets as opposed to general $E\mathcal M$-simplicial sets: they come with an interesting symmetric monoidal structure given by the \emph{box product}.

	\begin{defn}\label{def:BoxProdT}
		Let $ X, Y $ be mild $ \EM $-simplicial sets. We define their \emph{box product} $ X \boxtimes Y $ in degree $n$ as the set of all simplices $ (x,y) \in X_n\times Y_n $ such that for every $0\le k\le n$ we can find $A_k,B_k\subset\omega$ with $A\cap B=\emptyset$ and $A\cup B$ co-infinite such that $x$ is $k$-supported on $A_k$ and $y$ is $k$-supported on $B_k$.
	\end{defn}

	\begin{ex}\label{ex:boxtimes-Inj}
		Let $A,B$ be (finitely or infinitely) countable sets. Combining Examples~\ref{ex:injA} and~\ref{ex:EX-supp}, an $n$-simplex $(u_0,\dots,u_n;v_0,\dots,v_n)$ of $E\Inj(A,\omega)^\mu\times E\Inj(B,\omega)^\mu$ belongs to the box product if and only if $\im(u_k)\cap\im(v_k)=\emptyset$ and $\im(u_k)\cup\im(v_k)$ is co-infinite for every $k=0,\dots,n$. Thus, the map $E\Inj(A\amalg B,\omega)^\mu\to E\Inj(A,\omega)^\mu\times E\Inj(B,\omega)^\mu$ restricting to the coproduct summands induces an isomorphism $E\Inj(A\amalg B,\omega)^\mu\cong E\Inj(A,\omega)^\mu\boxtimes E\Inj(B,\omega)^\mu$.
	\end{ex}

	If $X$ and $Y$ are tame, we recall that $x$ and $y$ are $k$-supported on unique minimal finite sets $\supp_k(x)$ and $\supp_k(y)$; the above condition then simplifies to demanding that $\supp_k(x)\cap\supp_k(y)=\emptyset$ for all $0\le k\le n$. In this form, the box product of tame $E\mathcal M$-simplicial sets first appeared as \cite[Construction~2.16]{lenzGHT}, also see~\cite[Definition~2.12]{schwede} for a similar construction in $\mathcal M$-(simplicial) sets. By \cite[Proposition~2.17]{lenzGHT}, the box product of tame $E\mathcal M$-simplicial sets defines a subfunctor of the cartesian product, giving rise to a simplicial symmetric monoidal structure on $\EMSSett$. We will now generalize this to the present situation:

	\begin{prop}\label{prop:boxprod}
		Let $X,Y$ be mild $\EM$-simplicial sets. Then $X\boxtimes Y\subset X\times Y$ is a mild $E\mathcal M$-simplicial subset. Moreover, this construction gives rise to a simplicial subfunctor $\EMSSetm\times\EMSSetm\to\EMSSetm$ of the cartesian product $\EMSSetm\times\EMSSetm\to\EMSSet$, and this subfunctor preserves simplicial tensors in each variable.
	\end{prop}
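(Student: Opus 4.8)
The plan is to verify the claims in the natural order: first that $X\boxtimes Y$ is a simplicial subset, then that it is mild and closed under the $E\mathcal M$-action, then functoriality, and finally compatibility with simplicial tensors. For the simplicial subset claim, I would take a simplex $(x,y)\in (X\boxtimes Y)_n$ together with a witnessing family of disjoint pairs $(A_k,B_k)$ with $A_k\cup B_k$ co-infinite and apply a structure map $\phi\colon[\ell]\to[n]$ in $\Delta$. The element $\phi^*(x,y)=(\phi^*x,\phi^*y)$ then needs a witnessing family in each degree $0\le j\le\ell$, and the obvious candidate is $(A_{\phi(j)},B_{\phi(j)})$: by Proposition~\ref{prop:f(k)supp}, since $x$ is $\phi(j)$-supported on the co-infinite set $A_{\phi(j)}$, the simplex $\phi^*x$ is $j$-supported on $A_{\phi(j)}$, and likewise for $y$. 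The disjointness and co-infiniteness conditions are inherited verbatim, so $\phi^*(x,y)$ lies in the box product.

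For mildness and $E\mathcal M$-closedness I would reason analogously using Lemma~\ref{lem:ksupp}. Given $(x,y)\in(X\boxtimes Y)_n$ with witnesses $(A_k,B_k)$ and an element $(u_0,\dots,u_n)\in(E\mathcal M)_n$, the simplex $(u_0,\dots,u_n).(x,y)$ is $k$-supported (in $X$, resp.\ $Y$) on $u_k(A_k)$, resp.\ $u_k(B_k)$. The key point is that these new witnessing sets are still disjoint and have co-infinite union: disjointness holds because $u_k$ is injective, so $u_k(A_k)\cap u_k(B_k)=u_k(A_k\cap B_k)=\emptyset$, and the union is $u_k(A_k\cup B_k)$, which is co-infinite since $u_k$ sends the co-infinite set $A_k\cup B_k$ into $\omega$ injectively, forcing its image to miss infinitely many points. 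The same verification, applied with the $A_k$ and $B_k$ themselves, simultaneously shows that every simplex of $X\boxtimes Y$ is co-infinitely supported, giving mildness. (Here $X\boxtimes Y$ is literally a subset of $X\times Y$ with the restricted action, so I do not even need Lemma~\ref{lemma:tau-mu} to see it lands in $\EMSSetm$.)

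Functoriality and subfunctoriality are then essentially bookkeeping: given $E\mathcal M$-equivariant maps $f\colon X\to X'$, $g\colon Y\to Y'$ of mild objects, I would observe as in the proof of Lemma~\ref{lemma:tau-mu} that $k$-support is preserved by equivariant maps, so $(f\times g)$ carries a box-product simplex $(x,y)$ with witnesses $(A_k,B_k)$ to $(f(x),g(y))$ with the \emph{same} witnesses; hence $f\times g$ restricts to $f\boxtimes g\colon X\boxtimes Y\to X'\boxtimes Y'$, and this is compatible with composition and identities since it is a restriction of the product functor. Naturality of the inclusion $X\boxtimes Y\hookrightarrow X\times Y$ is immediate. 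For the simplicial tensor claim I would recall that the tensor of an $E\mathcal M$-simplicial set $X$ with a simplicial set $K$ is $X\otimes K=X\times K$ with $K$ carrying the trivial action; a simplex of $K$ is then $k$-supported on every subset (in particular on $\emptyset$), so an $n$-simplex $((x,k),y)$ of $(X\otimes K)\boxtimes Y$ lies in the box product exactly when $(x,y)$ does, yielding the canonical isomorphism $(X\otimes K)\boxtimes Y\cong(X\boxtimes Y)\otimes K$ natural in all variables, and symmetrically in the second variable.

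I expect the only genuinely substantive step to be the image computation in the $E\mathcal M$-closedness argument — namely confirming that injectivity of $u_k$ simultaneously preserves disjointness and co-infiniteness of the union — since everything else reduces to direct appeals to Proposition~\ref{prop:f(k)supp} and Lemma~\ref{lem:ksupp}. The subtlety worth flagging is that co-infiniteness of $u_k(A_k\cup B_k)$ does not follow from co-infiniteness of $A_k\cup B_k$ for an arbitrary injection of $\omega$ unless one argues via the image directly, so I would make that point explicit rather than treat it as routine.
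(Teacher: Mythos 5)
Your proposal is correct and follows essentially the same route as the paper: Proposition~\ref{prop:f(k)supp} for closure under simplicial operators, Lemma~\ref{lem:ksupp} for closure under the diagonal $E\mathcal M$-action (with the witnesses $(u_k(A_k),u_k(B_k))$, whose disjointness and co-infinite union follow from injectivity of $u_k$ exactly as you flag), mildness via the union $A_k\cup B_k$, and the observation that the trivial action on $K$ makes the associativity isomorphism restrict for the tensor claim. The paper compresses the functoriality and tensor steps to a one-line verification, but the content is identical.
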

	\begin{proof}
		Proposition~\ref{prop:f(k)supp} shows that $ X\boxtimes Y $ is simplicial and Lemma \ref{lem:ksupp} implies that it is closed under the diagonal $E\mathcal M$-action on the cartesian product $X\times Y$. Furthermore, $ X\boxtimes Y $ is mild as $ (x,y) \in (X\boxtimes Y)_n $ is $ k $-supported on the co-infinite set $ A_k\cup B_k $ for $ A_k $ and $ B_k $ as in the definition.

		It is then clear that the box product defines a simplicial subfunctor of the cartesian product. Moreover, one immediately checks from the definitions that for any simplicial set $ K $ and mild $ \EM $-simplicial sets $ X,Y $ the associativity isomorphism $ K\times (X\times Y) \to (K\times X)\times Y $ restricts to $ K\times (X\boxtimes Y) \to (K\times X) \boxtimes Y $, and similarly for the second variable, i.e.~the box product preserves tensors in each variable.
	\end{proof}

	\begin{prop}\label{prop:symmon}
		The unitality, associativity, and symmetry isomorphisms of the cartesian product on $ \EMSSet $ restrict to corresponding isomorphisms for $ \boxtimes $ on $ \EMSSetm $. This makes $ \EMSSetm $ into a simplicial symmetric monoidal category with tensor product $ \boxtimes $ and unit the terminal $ \EM $-simplicial set.
	\end{prop}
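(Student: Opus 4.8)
The plan is to leverage the fact, established in Proposition~\ref{prop:boxprod}, that $\boxtimes$ is a subfunctor of the cartesian product. If I can show that the unit, associativity, and symmetry isomorphisms of the cartesian symmetric monoidal structure on $\EMSSet$ carry the relevant box-product subobjects onto one another, then the restricted maps are again isomorphisms of mild $\EM$-simplicial sets, and every coherence diagram (pentagon, the two hexagons, the unit triangle) commutes automatically: it commutes in $\EMSSet$, and the inclusions $X\boxtimes Y\hookrightarrow X\times Y$ are monomorphisms, so an equality of two maps landing in a cartesian product persists after restricting the target to the box product. So the whole task reduces to checking that the three structure isomorphisms restrict.

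For the unit I would first observe that the terminal object $*$ is mild, since its unique simplex in each degree is fixed by all of $\EM$ and hence $k$-supported on the co-infinite set $\emptyset$ for every $k$. Taking $B_k=\emptyset$ in Definition~\ref{def:BoxProdT}, membership $(x,*)\in(X\boxtimes *)_n$ then amounts to $x$ being $k$-supported on a co-infinite set for each $k$, which holds for every $x\in X_n$ because $X$ is mild. Thus $X\boxtimes *=X\times *$, and the cartesian unit isomorphism restricts to the desired $X\boxtimes *\cong X$ (and symmetrically for $*\boxtimes X$). Symmetry is even easier: the defining condition of Definition~\ref{def:BoxProdT} is manifestly symmetric in the two variables, so the flip $(x,y)\mapsto(y,x)$ restricts to a bijection $X\boxtimes Y\to Y\boxtimes X$.

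The heart of the argument is associativity, and this is where I expect the real obstacle. My plan is to show that, under the associativity isomorphism of $\times$, both $((X\boxtimes Y)\boxtimes Z)_n$ and $(X\boxtimes(Y\boxtimes Z))_n$ correspond to the set of triples $(x,y,z)$ for which, for every $0\le k\le n$, there exist \emph{pairwise} disjoint sets $A_k,B_k,C_k$ with co-infinite union on which $x,y,z$ are respectively $k$-supported. The backward implications are routine: one groups two of the three sets together and uses that $k$-support passes to supersets (enlarging a support set only shrinks the relevant stabilizer $\M_{A_k}$). The forward implication is the crux. An element of $((X\boxtimes Y)\boxtimes Z)_n$ supplies, for each $k$, disjoint $A_k',B_k'$ witnessing $(x,y)\in X\boxtimes Y$ together with disjoint $P_k,Q_k$ witnessing $((x,y),z)\in(X\boxtimes Y)\boxtimes Z$; here ``$(x,y)$ is $k$-supported on $P_k$'' unwinds, via the diagonal action on $X\boxtimes Y$, to both $x$ and $y$ being $k$-supported on $P_k$. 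Since $A_k'\cup B_k'$ and $P_k\cup Q_k$ are co-infinite, so are $A_k'$, $B_k'$, and $P_k$, and I can invoke the intersection property of Proposition~\ref{prop:capSupp} — applied to $X_n$ viewed as an $\M$-set through $i_k$, for which $k$-support is precisely support — to conclude that $x$ is $k$-supported on $A_k:=A_k'\cap P_k$ and $y$ on $B_k:=B_k'\cap P_k$, while $z$ is $k$-supported on $C_k:=Q_k$. These three sets are pairwise disjoint ($A_k\cap B_k\subseteq A_k'\cap B_k'=\emptyset$ and $A_k,B_k\subseteq P_k$, $C_k=Q_k$ with $P_k\cap Q_k=\emptyset$) and their union lies in the co-infinite set $P_k\cup Q_k$, as required. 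The symmetric computation handles $X\boxtimes(Y\boxtimes Z)$, and since the associativity isomorphism of $\times$ sends $((x,y),z)$ to $(x,(y,z))$, it matches the two subobjects exactly.

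With all three structure isomorphisms restricted, the monoidal coherence is inherited from $\EMSSet$ as explained above. For the simplicial refinement I would note that the structure maps, being restrictions of simplicial natural transformations, are again simplicial natural, and that $\boxtimes$ preserves simplicial tensors in each variable by Proposition~\ref{prop:boxprod}; hence $(\EMSSetm,\boxtimes,*)$ is a simplicial symmetric monoidal category. The single genuinely nontrivial step is the forward direction of associativity, namely producing one pairwise-disjoint family $A_k,B_k,C_k$ out of the two separately chosen witnessing pairs — and this is exactly what the co-infinite intersection property of Proposition~\ref{prop:capSupp} is tailored to provide.
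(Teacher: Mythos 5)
Your proposal is correct and follows essentially the same route as the paper: the crucial step in both is to use Proposition~\ref{prop:capSupp} to replace the witnessing sets for $x$ and $y$ by their intersections with the set supporting the pair $(x,y)$, after which disjointness and co-infiniteness of the union are immediate. Your reformulation via a common ``unbiased'' description of both bracketings is just a slightly more symmetric packaging of the paper's argument (and is exactly what the paper records afterwards in Remark~\ref{rem:unbiasedBox}).
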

	\begin{proof}
		We show that the associativity isomorphism restricts to an isomorphism, the arguments for the unitality and symmetry isomorphisms being similar but easier.

		We take $ ((x,y),z) \in (X\boxtimes Y)\boxtimes Z $ and show $ (x,(y,z))\in X\boxtimes (Y\boxtimes Z) $. By the definition of the box product we get sets $ A_k,B_k,C_k,D_k\subset \omega $ such that $ x $ is $ k $-supported on $ A_k $, $ y $ is $ k $-supported on $ B_k $, $ A_k \cap B_k = \emptyset $, $ |(A_k\cup B_k)^c| = \infty $, $ z $ is $ k $-supported on $ C_k $, $ (x,y) $ is $ k $-supported on $ D_k $, $ C_k \cap D_k = \emptyset $ and $ |(C_k\cup D_k)^c| = \infty $. Note that while we know that $(x,y)$ is also $k$-supported on $A_k\cup B_k$, the set $D_k$ provided by the definition of the box product is a priori unrelated to $A_k$ and $B_k$, and in particular $C_k$ need not be disjoint from $A_k$ and $B_k$; we fix this as follows:

		As $(x,y)$ is $k$-supported on $D_k$, so are $x$ and $y$; thus, Proposition \ref{prop:capSupp} shows that $ x $ is $ k $-supported on $A_k'\mathrel{:=} A_k\cap D_k $ and $ y $ is $ k $-supported on $B_k'\mathrel{:=} B_k\cap D_k $; in particular, $(x,y)$ is $k$-supported on $D_k'\mathrel{:=}A_k'\cup B_k'\subset D_k$. Replacing $A_k$ by $A_k'$, $B_k$ by $B_k'$, and $D_k$ by $D_k'$  we may then indeed assume without loss of generality that $ A_k \cup B_k = D_k $.

		But then $ B_k\cap C_k \subset D_k\cap C_k = \emptyset $ and $ |(B_k\cup C_k)^c|\geq |(D_k\cup C_k)^c| = \infty $, so $ (y,z) \in Y\boxtimes Z $ and $ (y,z) $ is $ k $-supported on $ B_k\cup C_k $. Finally we observe that $ A_k\cap (B_k\cup C_k) = \emptyset $ and $ |(A_k\cup B_k \cup C_k)^c|=|(D_k\cup C_k)^c| = \infty $ hence $ (x,(y,z))\in X\boxtimes (Y\boxtimes Z) $ as claimed. Symmetrically, one shows that $(x,(y,z))\in X\boxtimes(Y\boxtimes Z)$ if $((x,y),z)\in (X\boxtimes Y)\boxtimes Z$.

		The coherence relations for the resulting isomorphisms then directly follow from the relations for the cartesian symmetric monoidal structure on $\EMSSet$.
	\end{proof}

	\begin{rem}\label{rem:unbiasedBox}
		For mild $ \EM $-simplicial sets $ X_1,\dots,X_m $ the argument above gives by induction that for each bracketing of $ X_1\boxtimes\dots\boxtimes X_m $ the image of the natural embedding into $X_1\times\cdots\times X_m$ is given in degree $ n $ precisely by the elements $ (x_1,\dots, x_m) $ such that for each $ 0\leq k\leq n $ there exist pairwise disjoint sets $ A_k^{(i)}\subset \omega $ with $ \bigcup_{i=1}^m A_k^{(i)} $ co-infinite such that $x_i$ is $k$-supported on $A_i^{(k)}$. We write $X_1\boxtimes\cdots\boxtimes X_m$ (without any bracketing) for this subcomplex, and refer to it as the \emph{unbiased ($m$-fold) box product}.
	\end{rem}

	\begin{rem}
		By definition, the inclusion $\cat{$\bm{E\mathcal M}$-SSet}^\tau\hookrightarrow\cat{$\bm{E\mathcal M}$-SSet}^\mu$ is strict symmetric monoidal, and it follows formally that the right adjoint $(-)^\tau$ acquires a lax symmetric monoidal structure. Unravelling the definitions, this structure is given by the unique map $*\to *^\tau$ and the maps $(X^\tau)\boxtimes (Y^\tau)\to (X\boxtimes Y)^\tau$ induced by the inclusions; in particular, it is actually a \emph{strong} symmetric monoidal structure.
	\end{rem}

	\subsection{An operadic description of the box product} Next, we will recall the \emph{operadic product} \cite[Construction~2.1.18]{lenzGglobal} on all $E\mathcal M$-simplicial sets:

	\begin{const} \label{const:operadBox}
		For $ X_1, \dots ,X_n \in \EMSSet $ we define
		\begin{equation}\label{eq:operadBox}
		E{\Inj}(n\times \omega,\omega)\times_{\EM^n} X_1\times\dots\times X_n
		\end{equation}
		as the quotient of $ E{\Inj}(n\times \omega,\omega)\times X_1\times\dots\times X_n $ by the equivalence relation generated on $m$-simplices by
		\begin{align*}
		&(f_0,\dots,f_m;u_1.x_1,\dots,u_n.x_n)\\
		&\quad\sim(f_0\circ(u_1^{(0)}\amalg\cdots\amalg u_n^{(0)}),\dots,f_m\circ(u_1^{(m)}\amalg\cdots\amalg u_n^{(m)}); x_1,\dots,x_n)
		\end{align*}
		for all $u_i=(u_i^{(0)},\dots,u_i^{(m)})\in (E\mathcal M)_m$; put differently, this is the coequalizer of the maps
		\begin{equation*}
		E\Inj(n\times\omega,\omega)\times E\mathcal M^n\times X_1\times\cdots\times X_n\rightrightarrows E\Inj(n\times\omega,\omega)\times X_1\times\cdots X_n
		\end{equation*}
		induced by the left $E\mathcal M$-actions on the $X_i$'s and by the right action on $E\Inj(n\times\omega,\omega)$ via precomposition, respectively.

		The $E\mathcal M$-action on $E\Inj(n\times\omega,\omega)$ via postcomposition then gives an $E\mathcal M$-action on $(\ref{eq:operadBox})$, and this then becomes a functor $\EMSSet^{\times n}\to\EMSSet$ in the obvious way. This functor moreover comes with a natural map
		\begin{equation}\label{eq:operadNat}
		\Phi\colon E{\Inj}(n\times \omega,\omega)\times_{\EM^n} X_1\times\dots\times X_n \to X_1\times\dots\times X_n
		\end{equation}
		which is on $ m $-simplices given by \[ \Phi[f_0,\dots,f_m;x_1,\dots,x_n]= ((f_0\iota_1,\dots,f_m\iota_1).x_1,\dots,(f_0\iota_n,\dots,f_m\iota_n).x_n) \]
		where $ \iota_j(t) =(j,t) $ for $ j=1,\dots,n $.
	\end{const}

	\begin{rem}\label{rk:boxtimes-classical}
		The simplicial monoid $E\mathcal M$ can be identified with the $1$-ary operations of a certain simplicial operad $\mathcal I$, the \emph{injections operad}; here the $n$-ary operations $\mathcal I(n)$ are given by $E\Inj(n\times\omega,\omega)$, and the operadic structure maps are given by `composition and juxtaposition.' There is a general construction producing bifunctors from operads \cite{may-operadic}, and specializing this to the operad $\mathcal I$ yields the case $n=2$ of the above construction.

		On the other hand, in the topological world we can apply this general construction to the \emph{linear isometries operad} $\mathcal L$ with $\mathcal L(n)$ the space of linear isometric embeddings $\mathbb R[n\times\omega]\to\mathbb R[\omega]$ and similarly defined structure maps, yielding an operadic product on the category $\textbf{$\bm{\mathcal L}$-Top}$ of topological spaces with continuous action by the topological monoid $\mathcal L\mathrel{:=}\mathcal L(1)$. This operadic product and various variants of it have been studied extensively in the literature \cite{kriz-may, Elmendorf, blum, lind, boehme}, and it is also typically referred to as \emph{box product} and denoted by $\boxtimes$ or $\boxtimes_{\mathcal L}$. One can show that this box product is suitably associative and commutative, but it is \emph{not} unital in the $1$-categorical sense. Instead, there is a natural comparison map $X\boxtimes_{\mathcal L} *\to X$ for any $\mathcal L$-space $X$, and Blumberg, Cohen, and Schlichtkrull \cite[Definition~4.9]{blum} defined a \emph{$*$-module} as an $\mathcal L$-space for which this map is an isomorphism; already before that, a similar concept in the stable world had been studied under the name \emph{$S$-module} by Elmendorf, K\v rí\v z, Mandell, and May \cite{Elmendorf}. The box product can then be shown to restrict to a symmetric monoidal structure on $*$-modules, and similarly for $S$-modules.
	\end{rem}

	In analogy with the topological story we therefore define:

	\begin{defn}\label{def:starMod}
		An $E\mathcal M$-simplicial set $X$ is called a \emph{$*$-module} if the above map $\Phi\colon E\Inj(2\times\omega,\omega)\times_{E\mathcal M^2}(X\times *)\to X\times *\cong X$ is an isomorphism.
	\end{defn}

	As the main results of this section we will now compare this to the notions studied in the previous subsection. Namely we will show:

	\begin{thm}\label{thm:starModEq}
		An $E\mathcal M$-simplicial set $X$ is a $*$-module if and only if it is mild.
	\end{thm}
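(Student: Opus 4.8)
The plan is to work directly with the comparison map $\Phi$ of Definition~\ref{def:starMod} and to show that it is a degreewise bijection precisely when $X$ is mild. For the \emph{only if} direction I would first observe that the image of $\Phi$ always lands inside $X^\mu$. Indeed, unwinding Construction~\ref{const:operadBox} gives $\Phi[f_0,\dots,f_m;x]=(f_0\iota_1,\dots,f_m\iota_1).x$, which by Lemma~\ref{lem:ksupp} is $k$-supported on $\im(f_k\iota_1)$; since $f_k\colon 2\times\omega\to\omega$ is injective, $\im(f_k\iota_1)$ is disjoint from the infinite set $\im(f_k\iota_2)$ and hence co-infinite. Thus $\Phi$ factors through $X^\mu\subseteq X$, so if $\Phi$ is surjective (in particular if it is an isomorphism) then $X=X^\mu$, i.e.\ $X$ is mild.

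For the \emph{if} direction, assume $X$ mild. I would first reformulate the source: since the second factor is a point, the relation in Construction~\ref{const:operadBox} allows precomposing the second block of each $f_j$ by an arbitrary injection without constraint, and quotienting this out identifies the source with the balanced product $E\mathcal M^\mu\times_{E\mathcal M}X$ (using Example~\ref{ex:mildM} to recognize the surviving first-block data $f_k\iota_1$ as \emph{admissible} tuples, i.e.\ tuples of injections with co-infinite image). Under this identification $\Phi$ becomes $[g;x]\mapsto g.x$ and the defining relation reads $(g;u.x)\sim(gu;x)$. Surjectivity is then immediate: for $x\in X_m$ with $x$ $k$-supported on a co-infinite $A_k$, one picks $g_k\in\M_{A_k}$ with co-infinite image, and then $g.x=x$ by Lemma~\ref{lem:agreeSupp}, so $x$ is hit.

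The real content is injectivity. Suppose $g.x=g'.x'=:y$. I would first move both representatives to have second coordinate $y$: choosing a bijection $b_k$ agreeing with $g_k$ on a co-infinite support $A_k$ of $x$, Lemma~\ref{lem:agreeSupp} gives $g.x=b.x$, whence $x=b^{-1}.y$ and $[g;x]=[gb^{-1};y]$. The new first coordinate $\hat g:=gb^{-1}$ is again admissible, and each $\hat g_k$ fixes the co-infinite set $D_k:=g_k(A_k)$, on which $y$ is $k$-supported by Lemma~\ref{lem:ksupp}. Doing the same for $(g',x')$ and intersecting supports via Proposition~\ref{prop:capSupp}, I may assume that $\hat g$ and $\hat g'$ both lie in $\M_{E_k}$ slotwise for a common co-infinite $k$-support $E_k$ of $y$. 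The engine of the argument is then elementary: if two admissible tuples $\beta,\beta'$ satisfy $\beta_k,\beta'_k\in\M_{E_k}$ and $\im\beta_k\subseteq\im\beta'_k$ for all $k$, then $u_k:=(\beta'_k)^{-1}\beta_k$ is defined, lies in $\M_{E_k}$, and satisfies $\beta=\beta'u$ with $u.y=y$, so $(\beta';y)\sim(\beta'u;y)=(\beta;y)$; that is, \emph{slotwise-comparable} $E$-fixing admissible tuples are identified over $y$.

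The main obstacle is that one cannot simply pass to a common refinement of $\hat g$ and $\hat g'$: the images $\im\hat g_k$ and $\im\hat g'_k$ are co-infinite, but their union need not be co-infinite (ruling out a common admissible upper bound) and their intersection need not properly exceed $E_k$ (ruling out a common admissible lower bound), so no single comparison step connects them. I would resolve this by a short zigzag, using that the poset of infinite–co-infinite subsets of the infinite set $E_k^c$ is connected, indeed any two such subsets are joined by a zigzag $S\supseteq(S\cap U)\subseteq U\supseteq(U\cap T)\subseteq T$ of length at most four with $U$ chosen to meet both $S$ and $T$ in infinite sets. Applying this with $S=\im\hat g_k\setminus E_k$ and $T=\im\hat g'_k\setminus E_k$, and realizing the four intermediate images by $E_k$-fixing admissible tuples, produces a chain of slotwise-comparable $E$-fixing tuples from $\hat g$ to $\hat g'$; crucially the inclusion pattern $\supseteq,\subseteq,\supseteq,\subseteq$ can be taken uniform in all degrees $k$ (padding shorter slots with trivial steps), so each of the four steps is a single application of the relation. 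Applying the engine to each consecutive pair yields $[\hat g;y]=[\hat g';y]$, hence $[g;x]=[g';x']$, which together with surjectivity shows $\Phi$ is an isomorphism. The only genuinely delicate point is this connectivity step and the synchronization of the four moves across all simplicial degrees; everything else is bookkeeping with the support lemmas of Section~\ref{sec:M-sets}.
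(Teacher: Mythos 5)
Your proposal is correct in outline but follows a genuinely different route from the paper. The paper does not prove the ``mild $\Rightarrow$ $*$-module'' direction directly: it deduces it as the special case $n=2$, $X_2=*$ of the stronger Theorem~\ref{thm:boxprodOperadic} (the $n$-ary operadic product agrees with the $n$-ary box product), whose injectivity proof funnels the combinatorics through Lemma~\ref{lem:kfsupp} and Lemmas~\ref{lem:Inj/M} and~\ref{lem:f.xEqualg.x}; the latter in turn reduce everything to the known fact that $\Inj(n\times\omega,\omega)/\mathcal M^n$ is a single point via a ``template map'' trick, rather than by exhibiting explicit zigzags. You instead collapse the second (trivial) factor first, recast the source as a balanced product over $E\mathcal M$ with slotwise-admissible tuples, and prove injectivity by hand via a length-four zigzag in the poset of $E_k$-fixing co-infinite images. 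Your opening reduction ($x=b^{-1}.y$ via bijections agreeing with $g_k$ on a support of $x$) is essentially an inline version of the paper's Lemma~\ref{lem:kfsupp}. What the paper's route buys is the full $n$-ary statement (needed later for the symmetric monoidal structure and the comparison with $\mathcal I$-algebras) and a cleaner handling of the connectivity issues; what yours buys is a self-contained, more elementary argument for exactly the statement at hand. Your ``only if'' direction is identical to the paper's.

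Two steps deserve more than the justification you give them. First, the identification of the source with $E\mathcal M^\mu\times_{E\mathcal M}X$ is not purely formal: you must show that two lifts $f,f'\in\Inj(2\times\omega,\omega)$ with $f\iota_1=f'\iota_1$ are connected by the second-block relation $f\sim f(\id\amalg v)$, i.e.\ that the fibers of $f\mapsto f\iota_1$ are single equivalence classes. This is true (it is the $n=1$ case of the orbit statement $\Inj(\omega,C)/\mathcal M=*$ for $C$ countably infinite, the same external input the paper uses inside Lemma~\ref{lem:Inj/M}, or another small zigzag of the type you already deploy), but as written your parenthetical only addresses that $f\iota_1$ is admissible, not that the quotient map is injective; without this, injectivity of your $\Psi$ does not yet give injectivity of $\Phi$. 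Second, in the connectivity step the set $U$ must meet $S$ and $T$ in infinite sets \emph{and} be co-infinite in $E_k^c$; the naive choice $U=S_0\cup T_0$ with $S_0\subset S$, $T_0\subset T$ infinite halves can fail to be co-infinite when $S\cup T$ is cofinite in $E_k^c$, so one needs a back-and-forth construction simultaneously keeping $U\cap S$, $S\setminus U$, $U\cap T$, $T\setminus U$ infinite. Both points are fixable with the techniques you are already using, so I would classify them as gaps in exposition rather than in the method.
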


	\begin{thm}\label{thm:boxprodOperadic}
		Let $X_1,\dots,X_n$ be mild $E\mathcal M$-simplicial sets. Then $\Phi$ restricts to an isomorphism $E\Inj(n\times\omega,\omega)\times_{E\mathcal M^n} X_1\times\cdots X_n\cong X_1\boxtimes\cdots\boxtimes X_n$.
	\end{thm}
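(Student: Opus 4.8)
The plan is to show that in each simplicial degree $m$ the map $\Phi$ restricts to a bijection between the operadic product and the subset $(X_1\boxtimes\cdots\boxtimes X_n)_m$ of $(X_1\times\cdots\times X_n)_m$ described in Remark~\ref{rem:unbiasedBox}. Since $\Phi$ is a map of $E\mathcal M$-simplicial sets by Construction~\ref{const:operadBox} and the box product is a mild $E\mathcal M$-simplicial subset by Proposition~\ref{prop:boxprod}, a degreewise bijection onto it is automatically an isomorphism of $E\mathcal M$-simplicial sets. I will write $[f_0,\dots,f_m;x_1,\dots,x_n]$, abbreviated $[f_\bullet;x_\bullet]$, for a point of the operadic product and set $y_i:=(f_0\iota_i,\dots,f_m\iota_i).x_i$, so that $\Phi[f_\bullet;x_\bullet]=(y_1,\dots,y_n)$. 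The proof then splits into showing that the image of $\Phi$ lands in the box product, that it surjects onto it, and that $\Phi$ is injective; the last point is the crux.

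For the image, fix a representative and use mildness to pick, for each $k$, a co-infinite set $S_k^{(i)}$ on which $x_i$ is $k$-supported. By Lemma~\ref{lem:ksupp} the component $y_i$ is then $k$-supported on $A_k^{(i)}:=f_k(\{i\}\times S_k^{(i)})$. These sets are pairwise disjoint in $i$ because $f_k$ is injective, and their union is co-infinite: its complement contains $f_k(\{i\}\times(\omega\setminus S_k^{(i)}))$, which is infinite and, again by injectivity of $f_k$, disjoint from every $A_k^{(i')}$. Thus $(y_1,\dots,y_n)$ lies in the box product. Conversely, given $(y_1,\dots,y_n)$ in the box product with witnessing data $A_k^{(i)}$, I construct $f_k\in\Inj(n\times\omega,\omega)$ by setting $f_k(i,a)=a$ for $a\in A_k^{(i)}$ (consistent and injective there since the $A_k^{(i)}$ are disjoint) and extending arbitrarily but injectively into the infinite set $\omega\setminus\bigcup_i A_k^{(i)}$. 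Then each $f_k\iota_i$ lies in $\M_{A_k^{(i)}}$, so it fixes $y_i$ in the $k$-th slot and hence $(f_0\iota_i,\dots,f_m\iota_i).y_i=y_i$; therefore $\Phi[f_\bullet;y_\bullet]=(y_1,\dots,y_n)$, proving surjectivity onto the box product.

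For injectivity I would show that the class $[f_\bullet;x_\bullet]$ is determined by its image $(y_1,\dots,y_n)$, by reducing it to a standard representative. Using the supports $A_k^{(i)}=f_k(\{i\}\times S_k^{(i)})$ produced above, build $e_k$ fixing each $A_k^{(i)}$ pointwise as in the previous paragraph. Rewriting $y_i=(f_0\iota_i,\dots,f_m\iota_i).x_i$ and applying the defining relation of the operadic product (with $u_i=(f_0\iota_i,\dots,f_m\iota_i)$) moves the $E\mathcal M$-action onto the first coordinate, turning $[e_\bullet;y_\bullet]$ into $[p_\bullet;x_\bullet]$ with $p_k(i,t)=e_k(i,f_k(i,t))$; since $e_k$ fixes $A_k^{(i)}$, for $t\in S_k^{(i)}$ we get $p_k(i,t)=f_k(i,t)$, i.e.\ the tuples $p_\bullet$ and $f_\bullet$ agree on $\bigcup_i\{i\}\times S_k^{(i)}$. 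It then remains to see that the class $[\,\cdot\,;x_1,\dots,x_n]$ depends on the first coordinate only through its restriction to this set, which gives $[f_\bullet;x_\bullet]=[e_\bullet;y_\bullet]$. For two representatives of the same image one intersects the two systems of supports via Proposition~\ref{prop:capSupp} and transports the supports of the $x_i$ to the intersection using the preimage statement of Lemma~\ref{lem:agreeSupp}-(\ref{item:aS-preimage}), so that a single choice of $e_\bullet$ serves both; this simultaneously yields injectivity and the well-definedness of the inverse.

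The main obstacle is exactly the lemma invoked at the end: that two tuples of injections $n\times\omega\to\omega$ agreeing on $\bigcup_i\{i\}\times S_k^{(i)}$ (with $x_i$ being $k$-supported on the co-infinite $S_k^{(i)}$ and the images having co-infinite union) induce the same class in the operadic product. This is an $n$-variable, operadic analogue of Proposition~\ref{prop:equalInM/M_A}; as there, the two tuples will generally \emph{not} be connected by a single precomposition with a block injection fixing the $x_i$, and one must pass through a common refinement. Making room for the required bijection is precisely where the co-infiniteness of $\bigcup_i f_k(\{i\}\times S_k^{(i)})$ established above and the estimate of Proposition~\ref{prop:infiniteCompl} enter, and I expect the statement to follow by adapting the proof of Proposition~\ref{prop:equalInM/M_A} to this block setting. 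Alternatively, one could treat the case $n=2$ first and deduce the general statement by induction, using that both the box product (Remark~\ref{rem:unbiasedBox}) and the operadic product are suitably associative and that the intermediate box products remain mild by Proposition~\ref{prop:boxprod}; this repackages, but does not remove, the core difficulty.
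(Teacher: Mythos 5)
Your overall architecture---image, surjectivity, and the reduction of injectivity to the statement that the class $[f_0,\dots,f_m;x_1,\dots,x_n]$ depends on the $f_k$ only through their restrictions to $\bigcup_i\{i\}\times S_k^{(i)}$---is exactly the paper's, and your image and surjectivity arguments are complete and agree with the ones given there. However, there are two genuine gaps in the injectivity part. The first is the one you flag yourself: the key lemma that two tuples of injections agreeing on the (co-infinite) supports represent the same class is only conjectured, with the suggestion that it should follow by adapting Proposition~\ref{prop:equalInM/M_A}. The paper does not prove it that way: it first establishes the purely combinatorial Lemma~\ref{lem:Inj/M}, identifying the equivalence relation on $\Inj(n\times\omega,\omega)$ generated by precomposition with block maps $f_1\amalg\cdots\amalg f_n$, $f_i\in\M_{A_i}$, by exhibiting an explicit map $h\mapsto\tilde h$ that hits both given injections and descends along $\Inj(n\times\omega,\omega)/\M^n\to\Inj(n\times\omega,\omega)/{\sim}$, and then using that the source is a single point; the statement you actually need (Lemma~\ref{lem:f.xEqualg.x}) is deduced from this. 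Since this is, as you say, the crux of the whole theorem, leaving it at ``I expect the statement to follow'' is a real gap rather than a routine verification.

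The second gap is the support-transport step. You invoke Lemma~\ref{lem:agreeSupp}-(\ref{item:aS-preimage}) to pull supports of $y_i=(f_0\iota_i,\dots,f_m\iota_i).x_i$ back to supports of $x_i$, but that lemma concerns a single $\M$-action, whereas $k$-support of $y_i$ on $f_k\iota_i(A)$ is a statement about the element obtained by acting with the whole tuple, with different injections in the different slots. To conclude that $x_i$ is $k$-supported on $A$ one must first cancel the action in the slots other than $k$; this is precisely Lemma~\ref{lem:kfsupp} in the paper, whose proof is not formal: it uses injectivity of the action on mild objects (Lemma~\ref{lem:injAct}) together with a preliminary reduction via Proposition~\ref{prop:capSupp}. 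Once these two lemmas are supplied, your ``standard representative'' packaging of injectivity does work and is a mild cosmetic variant of the paper's argument, which instead compares $f_\bullet$ and $g_\bullet$ directly via block injections $s_k^{(j)}$ extending the induced bijections between the transported supports, after first disposing of the special case $f_\bullet=g_\bullet$ by Lemma~\ref{lem:injAct}.
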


	The proof of the second theorem will require some preparations; for now let us already use it to prove the first one:

 	\begin{proof}[Proof of Theorem~\ref{thm:starModEq}]
 		If $ X $ is mild, then Theorem \ref{thm:boxprodOperadic} in particular implies that $\Phi: E{\Inj}(2\times \omega, \omega) \times_{{\EM}^2} (X\times *) \to X\boxtimes *$ is an isomorphism. But $X\boxtimes *=X\times *$, whence $ X $ is a $ * $-module.

 		Conversely, assume $ X $ is a $ * $-module and let $ x\in X_n $ arbitrary.	Then there are $ f_0, \dots f_n \in \Inj(2\times \omega, \omega) $ and $ y \in X_n $ with
 		$x = \Phi[f_0, \dots , f_n;y] = (f_0 \iota_1, \dots, f_n \iota_1).y$ by assumption. Lemma~\ref{lem:ksupp} therefore shows that $ x $ is $ k $-supported on $ \im(f_k\iota_1) $ for any $k$, which is co-infinite as its complement contains $\im(f_k\iota_2)$.
 	\end{proof}

	We now turn to the preparations for the proof of Theorem~\ref{thm:boxprodOperadic}. We begin with the following converse to Lemma~\ref{lem:ksupp}:

	\begin{lem}\label{lem:kfsupp}
		Let $X$ be an $E\mathcal M$-simplicial set, let $x\in X_n$ be co-infinitely supported, let $u_0,\dots,u_n\in\mathcal M$, and let $A\subset\omega$ co-infinite such that $(u_0,\dots,u_n).x$ is $k$-supported on $u_k(A)$. Then $x$ is $k$-supported on $A$.
		\begin{proof}
			Replacing $X$ by $X^\mu$ (which contains $x$ by assumption), we may assume without loss of generality that $X$ is mild.

			Let now $B$ be co-infinite such that $x$ is $k$-supported on $B$. By Lemma~\ref{lem:ksupp}, $(u_0,\dots,u_n).x$ is $k$-supported on $u_k(B)$, hence on $u_k(A\cap B)=u_k(A)\cap u_k(B)$ by Proposition~\ref{prop:capSupp}. We may therefore assume without loss of generality that $A\subset B$, so it will be enough by Lemma~\ref{lem:agreeSupp}-(\ref{item:aS-preimage}) applied to $i_k^*X_n$ to show that $i_k(u_k).x$ is $k$-supported on $u_k(A)$.

			For this we let $f$ be any injection fixing $u_k(A)$ pointwise. Then
			\begin{align*}
				&(u_0,\dots,u_{k-1},1,u_{k+1},\dots,u_n).i_k(f).i_k(u_k).x\\
			 	&\qquad= i_k(f).(u_0,\dots,u_n).x = (u_0,\dots,u_n).x\\
				&\qquad= (u_0,\dots,u_{k-1},1,u_{k+1},\dots,u_n).i_k(u_k).x
			\end{align*}
			by assumption, hence $i_k(f).i_k(u_k).x=i_k(u_k).x$ since $(u_0,\dots,u_{k-1},1,u_{k+1},\dots,u_n)$ acts injectively by Lemma~\ref{lem:injAct}. Letting $f$ vary, the claim follows immediately.
		\end{proof}
	\end{lem}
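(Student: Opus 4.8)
The plan is to establish this as a converse to Lemma~\ref{lem:ksupp} by stripping the coordinates $u_j$ with $j\neq k$ off the hypothesis, a cancellation that is only legitimate once the relevant actions are known to be injective. First I would reduce to the case that $X$ is mild: since $x$ is co-infinitely supported it lies in $X^\mu$, so replacing $X$ by $X^\mu$ leaves the statement unchanged but makes Lemma~\ref{lem:injAct} available. The crucial observation here is that for mild $X$, co-infinite supportedness of every $n$-simplex says precisely that for each fixed $j$ the pullback $\M$-set $i_j^*X_n$ (that is, $X_n$ with the $\M$-action through the $j$-th coordinate inclusion $i_j\colon\M\to\M^{1+n}$) is a \emph{mild} $\M$-set; hence each $i_j(u_j)$ acts injectively on $X_n$ by Lemma~\ref{lem:injAct}.

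Next I would carry out the reduction to a single coordinate. Pick a co-infinite $B$ on which $x$ is $k$-supported. Then $(u_0,\dots,u_n).x$ is $k$-supported on $u_k(B)$ by Lemma~\ref{lem:ksupp}, and it is $k$-supported on $u_k(A)$ by hypothesis, so Proposition~\ref{prop:capSupp} applied in $i_k^*X_n$ (both $u_k(A)$ and $u_k(B)$ being co-infinite) shows it is $k$-supported on $u_k(A)\cap u_k(B)=u_k(A\cap B)$. Since $k$-support on the smaller set $A\cap B$ is a stronger conclusion than $k$-support on $A$, I may replace $A$ by $A\cap B$ and thereby assume $A\subseteq B$. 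Now $x$ is $k$-supported on the co-infinite set $B\supseteq A$, so Lemma~\ref{lem:agreeSupp}-(\ref{item:aS-preimage}), applied in $i_k^*X_n$ with $f=u_k$, reduces the entire claim to showing that $i_k(u_k).x$ is $k$-supported on $u_k(A)$.

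The computational heart is then as follows. Fix an arbitrary $f\in\M_{u_k(A)}$ and write $v\mathrel{:=}(u_0,\dots,u_{k-1},1,u_{k+1},\dots,u_n)$. A direct coordinatewise multiplication in $\M^{1+n}$ yields the two identities $v\cdot i_k(f)\cdot i_k(u_k)=i_k(f)\cdot(u_0,\dots,u_n)$ and $v\cdot i_k(u_k)=(u_0,\dots,u_n)$. Applying these to $x$ and using that $i_k(f)$ fixes $(u_0,\dots,u_n).x$ (as the latter is $k$-supported on $u_k(A)$ and $f\in\M_{u_k(A)}$), I get $v.\bigl(i_k(f).i_k(u_k).x\bigr)=v.\bigl(i_k(u_k).x\bigr)$. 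Since $v$ is a composite of the maps $i_j(u_j)$ with $j\neq k$, each injective by the observation above, $v$ acts injectively and may be cancelled, giving $i_k(f).i_k(u_k).x=i_k(u_k).x$. Letting $f$ range over $\M_{u_k(A)}$ shows $i_k(u_k).x$ is $k$-supported on $u_k(A)$, which by the previous paragraph finishes the argument.

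I expect the main obstacle to be precisely this cancellation step, since monoid actions admit no cancellation in general: the whole proof hinges on first establishing injectivity of $v$, and hence on the somewhat delicate point that mildness of $X$ is exactly what forces each single-coordinate restriction $i_j^*X_n$ to be mild, so that Lemma~\ref{lem:injAct} applies. A secondary source of care is the bookkeeping between the two $\M$-actions in play — the full $\M^{1+n}$-action on $X_n$ and its pullbacks $i_j^*$ — making sure Proposition~\ref{prop:capSupp} and Lemma~\ref{lem:agreeSupp} are each invoked in the correct pullback $\M$-set.
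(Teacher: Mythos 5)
Your proof is correct and follows essentially the same route as the paper's: the same reduction to mild $X$, the same replacement of $A$ by $A\cap B$ via Lemma~\ref{lem:ksupp} and Proposition~\ref{prop:capSupp}, the same appeal to Lemma~\ref{lem:agreeSupp}-(\ref{item:aS-preimage}) in $i_k^*X_n$, and the same cancellation of $v=(u_0,\dots,u_{k-1},1,u_{k+1},\dots,u_n)$. The only difference is that you spell out why $v$ acts injectively (factoring it as a product of the $i_j(u_j)$ and applying Lemma~\ref{lem:injAct} to each mild pullback $\M$-set $i_j^*X_n$), a point the paper leaves implicit in its citation of that lemma.
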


 	\begin{lem}\label{lem:Inj/M}
 		Let $ n\geq 1 $ and let $ A_1,\dots,A_n $ be subsets of $ \omega $ with infinite complement. Consider the  equivalence relation on $ \Inj (n\times \omega,\omega) $ generated by the relation $\phi \sim \phi(f_1 \amalg \dots \amalg f_n)$ for all $ f_i\in \M_{A_i} $, $i=1,\dots,n$. Then two elements of $ \Inj (n\times \omega,\omega) $ are equivalent if and only if they agree on the subsets $ \{i\}\times A_i $ of $ n\times \omega $ for all $ i=1,\dots,n. $
 	\end{lem}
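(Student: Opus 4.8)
The forward implication is immediate: each generator replaces the $i$-th coordinate $\phi_i$ of $\phi\in\Inj(n\times\omega,\omega)$ (viewed as a tuple $(\phi_1,\dots,\phi_n)$ of injections with pairwise disjoint images) by $\phi_i f_i$ with $f_i\in\M_{A_i}$, and since $f_i$ fixes $A_i$ pointwise this does not change $\phi_i|_{A_i}$; hence agreement on the sets $\{i\}\times A_i$ is preserved along the generated equivalence.

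The substance is the converse. The plan is to connect $\phi$ and $\psi$ (with $\phi_i|_{A_i}=\psi_i|_{A_i}=:j_i$ for all $i$) by a zig-zag of generators, exploiting that right multiplication by $\M_{A_i}$ in the $i$-th slot can only \emph{shrink} the image $\phi_i(\omega)$ and never enlarge it. The cleanest building block is a \emph{common lower bound}: if for every $i$ the intersection $\phi_i(A_i^c)\cap\psi_i(A_i^c)$ is infinite, then coordinatewise I choose an injection $A_i^c\hookrightarrow\phi_i(A_i^c)\cap\psi_i(A_i^c)$ and pull it back along $\phi_i$ and $\psi_i$ to obtain $p_i,q_i\in\M_{A_i}$ with $\phi_i p_i=\psi_i q_i$. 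Setting $\rho:=\phi\circ(p_1\amalg\cdots\amalg p_n)=\psi\circ(q_1\amalg\cdots\amalg q_n)$ gives $\phi\sim\rho\sim\psi$; note that $\rho$ automatically lies in $\Inj(n\times\omega,\omega)$ since $\rho_i(\omega)\subseteq\phi_i(\omega)$, so no cross-coordinate disjointness has to be checked.

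In general the per-coordinate intersections $\phi_i(A_i^c)\cap\psi_i(A_i^c)$ need not be infinite, so I must first \emph{manufacture} overlap by passing to larger representatives. The obstruction to doing this naively — and the heart of the proof — is precisely that the coordinates of an element of $\Inj(n\times\omega,\omega)$ are required to have pairwise disjoint images: one cannot simply enlarge each $\phi_i(A_i^c)$ and $\psi_i(A_i^c)$ to a common target, as a chosen point might be forced into two different coordinates at once. I resolve this in two steps. First, arguing as in Proposition~\ref{prop:infiniteCompl} — but now with the several subsets $A_1,\dots,A_n$ and the $2n$ injections $\phi_1,\dots,\phi_n,\psi_1,\dots,\psi_n$ simultaneously — I thin out the $A_i^c$-parts, replacing $\phi,\psi$ by equivalent representatives $\phi',\psi'$ whose \emph{combined} image $\bigcup_i\phi'_i(\omega)\cup\bigcup_i\psi'_i(\omega)$ is co-infinite. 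The forced parts $\bigcup_i\phi_i(A_i)=\bigcup_i\psi_i(A_i)$ are already pairwise disjoint and co-infinite, so only the $A_i^c$-parts must be shrunk, which is possible by a diagonal/chain construction reserving infinitely many points; this frees up an infinite region into which I can expand.

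Second, having created room, I pick pairwise disjoint infinite sets $N_1,\dots,N_n$ inside the (now infinite) complement of the combined image and enlarge both representatives \emph{into the same fresh region}: I let $\Phi_i$ agree with $j_i$ on $A_i$ and map $A_i^c$ bijectively onto $\phi'_i(A_i^c)\cup N_i$, and likewise $\Psi_i$ onto $\psi'_i(A_i^c)\cup N_i$. Freshness of the $N_i$ guarantees that $\Phi,\Psi\in\Inj(n\times\omega,\omega)$, the inclusions $\phi'_i(\omega)\subseteq\im\Phi_i$ give $\Phi\sim\phi'\sim\phi$ (and symmetrically $\Psi\sim\psi$), and by construction $\Phi_i(A_i^c)\cap\Psi_i(A_i^c)\supseteq N_i$ is infinite for every $i$. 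Applying the common-lower-bound step to $\Phi$ and $\Psi$ then yields $\Phi\sim\Psi$, and stringing the resulting zig-zag together gives $\phi\sim\psi$. As with Propositions~\ref{prop:infiniteCompl} and~\ref{prop:equalInM/M_A}, the only genuinely delicate point is the simultaneous thinning in the first step; everything else is elementary bookkeeping with countable sets.
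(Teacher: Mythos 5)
Your proof is correct, but it takes a genuinely different route from the paper's. The paper avoids any explicit zig-zag: given $f,g$ agreeing on the sets $\{i\}\times A_i$, it fixes bijections $\omega\setminus A_i\cong\omega$ and $\omega\setminus f(\bigcup_i\{i\}\times A_i)\cong\omega$ and uses them to build a single self-map $h\mapsto\tilde h$ of $\Inj(n\times\omega,\omega)$ that hits both $f$ and $g$ and descends to a map $\Inj(n\times\omega,\omega)/\mathcal M^n\to\Inj(n\times\omega,\omega)/{\sim}$; since the source is a single point by \cite[Lemma~A.5]{schwede}, the identity $[f]=[g]$ drops out immediately. You instead construct the zig-zag by hand: thin both representatives (via an adaptation of the chain construction of Proposition~\ref{prop:infiniteCompl} to several coordinates and $2n$ injections) until the combined image is co-infinite, inflate both into a common fresh region $N_1,\dots,N_n$, and then meet them at a common lower bound. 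The individual steps all check out --- in particular the remark that $\rho=\phi(p_1\amalg\cdots\amalg p_n)$ needs no separate cross-coordinate disjointness check, and the observation that the forced part $\bigcup_i\phi_i(A_i)$ is automatically co-infinite because $\bigcup_i\{i\}\times A_i^c$ is infinite and maps injectively into its complement. What your route buys is self-containedness (no appeal to the transitivity statement for $\mathcal M^n$ on $\Inj(n\times\omega,\omega)$) and an explicit, bounded-length zig-zag; what it costs is the simultaneous-thinning step, which you rightly flag as the only delicate point and which would need to be written out in full (the alternating choice of elements for the chains $B^{(i)}_j$, $\tilde B^{(i)}_j$, $C_j$) for the argument to be complete.
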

 	\begin{proof}
 		The `only if' part is clear. For the `if' part, we fix $f,g\in\Inj(n\times\omega,\omega)$ agreeing on $\bigcup_{i=1}^n\{i\}\times A_i$, and we pick bijections $\phi_i\colon\omega\setminus A_i\cong\omega$ and $\psi\colon \omega\setminus f(\bigcup_{i=1}^n\{i\}\times A_i)\cong\omega$. Given $h\in\Inj(n\times\omega,\omega)$, we then define $\tilde h\colon n\times\omega\to\omega$ via
		\begin{equation*}
			\tilde h(i,x)=\begin{cases}
				f(i,x)=g(i,x) & \text{if }x\in A_i\\
				\psi h(i,\phi_i(x)) & \text{otherwise}.
			\end{cases}
		\end{equation*}
		One then easily checks that $h\mapsto \tilde h$ defines a map $\Inj(n\times\omega,\omega)\to\Inj(n\times\omega,\omega)$ hitting $f$ and $g$, and that this descends to $\Inj(n\times\omega,\omega)/\mathcal M^n\to\Inj(n\times\omega,\omega)/{\sim}$. However, the left hand side consists of a single equivalence class by \cite[Lemma A.5]{schwede}, so necessarily $[f]=[g]$ and the claim follows.
 	\end{proof}

 	\begin{lem}
 		\label{lem:f.xEqualg.x}
 		Let $ X_1,\dots,X_n\in \EMSSetm $, $ (x_1,\dots,x_n)\in (X_1,\dots,X_n)_m $ with $x_j$ $k$-supported on some co-infinite $ A_{k}^{(j)} $ for all $j,k$, and let $ f_0,\dots,f_m,g_0,\dots,g_m \in \Inj(n\times\omega,\omega) $ such that $ f_k $ and $ g_k $ agree on $\bigcup_{j=1}^n \{j\}\times A_{k}^{(j)}  $ for all $k$. Then $$ [f_0, \dots , f_m;x_1,\dots,x_n] = [g_0, \dots , g_m;x_1,\dots,x_n] $$ in $ E{\Inj}(n\times \omega, \omega) \times_{{\EM}^n} (X_1\times\dots\times X_n) $.
 	\end{lem}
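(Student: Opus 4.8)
The plan is to deduce this from the single-level statement of Lemma~\ref{lem:Inj/M} together with the defining relation of the operadic quotient in Construction~\ref{const:operadBox}. The guiding idea is that precomposing the $f_k$ by injections fixing the relevant supports of the $x_i$ does not change the class $[f_0,\dots,f_m;x_1,\dots,x_n]$, because such precompositions can be absorbed into an $E\mathcal M$-action fixing each $x_i$.

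First I would record the key absorption step. Suppose $u_i=(u_i^{(0)},\dots,u_i^{(m)})\in(E\mathcal M)_m$ satisfies $u_i^{(k)}\in\mathcal M_{A_k^{(i)}}$ for all $0\le k\le m$. Writing $u_i=i_0(u_i^{(0)})\dotsm i_m(u_i^{(m)})$ as a product of the commuting elements $i_k(u_i^{(k)})$ and using that $x_i$ is $k$-supported on $A_k^{(i)}$, we get $i_k(u_i^{(k)}).x_i=x_i$ and hence $u_i.x_i=x_i$. Feeding this into the defining relation of Construction~\ref{const:operadBox} yields
$$[f_0,\dots,f_m;x_1,\dots,x_n]=[f_0(u_1^{(0)}\amalg\dots\amalg u_n^{(0)}),\dots,f_m(u_1^{(m)}\amalg\dots\amalg u_n^{(m)});x_1,\dots,x_n].$$
In particular, choosing all components but the $k_0$-th to be the identity shows that precomposing a single $f_{k_0}$ by $v_1\amalg\dots\amalg v_n$ with $v_i\in\mathcal M_{A_{k_0}^{(i)}}$, while leaving the other $f_k$ untouched, preserves the class.

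With this in hand I would change the $f_k$ into the $g_k$ one level at a time. Fix $k_0$ and apply Lemma~\ref{lem:Inj/M} with $A_i\mathrel{:=}A_{k_0}^{(i)}$ (co-infinite by hypothesis): since $f_{k_0}$ and $g_{k_0}$ agree on $\bigcup_{j}\{j\}\times A_{k_0}^{(j)}$, they are connected by a finite chain of precompositions of the form $\phi\mapsto\phi(v_1\amalg\dots\amalg v_n)$, $v_i\in\mathcal M_{A_{k_0}^{(i)}}$, and their inverses. By the previous paragraph each such move preserves the class (symmetry of equality of classes handles the inverse moves), so we may replace $f_{k_0}$ by $g_{k_0}$ without changing the class while keeping the remaining entries fixed. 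Iterating over $k_0=0,\dots,m$ transforms $[f_0,\dots,f_m;x_1,\dots,x_n]$ into $[g_0,\dots,g_m;x_1,\dots,x_n]$, as desired.

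The only genuine subtlety---and the point I would be most careful about---is the interface between Lemma~\ref{lem:Inj/M}, which concerns a single copy of $\Inj(n\times\omega,\omega)$, and the relation of Construction~\ref{const:operadBox}, which a priori acts on all simplicial levels $k$ simultaneously through one element $u_i\in(E\mathcal M)_m$. The resolution is precisely the observation that the components $u_i^{(0)},\dots,u_i^{(m)}$ of $u_i$ may be prescribed independently; setting all but one to the identity isolates a single level and matches the generators of the equivalence relation in Lemma~\ref{lem:Inj/M}. Everything else is a direct application of the two cited results.
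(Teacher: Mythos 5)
Your proposal is correct and follows essentially the same route as the paper, which deduces the lemma inductively (level by level) from Lemma~\ref{lem:Inj/M} via the absorption of precompositions into the $E\mathcal M$-action, exactly as you describe. The key points you flag --- isolating a single simplicial level by setting the other components of $u_i$ to the identity, and handling the zigzag generated by the relation in Lemma~\ref{lem:Inj/M} by symmetry of equality --- are precisely the content of the argument the paper cites from the tame case.
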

 	\begin{proof}
		This follows inductively from the previous lemma by the same argument as in the tame case \cite[Lemma 2.1.20]{lenzGglobal}.
 	\end{proof}

 	\begin{proof}[Proof of Theorem~\ref{thm:boxprodOperadic}]
 		With all of the above results at hand, we can employ a similar strategy to the tame case considered in \cite[Theorem~2.1.19]{lenzGglobal}.

		Lemma \ref{lem:ksupp} implies that the image of $ \Phi $ is contained in $ X_1\boxtimes \dots \boxtimes X_n $.
 		For the other inclusion, let $ (x_1,\dots,x_n)\in (X_1\boxtimes\dots\boxtimes X_n)_m $ and pick for each $0\le k\le m$ pairwise disjoint $A_{k}^{(1)},\dots,A_k^{(n)}$ with $\bigcup_{j=1}^n A^{(j)}_k$ co-infinite such that $x_j$ is $k$-supported on $A_{k}^{(j)}$ (see Remark~\ref{rem:unbiasedBox}). Then we can choose $ f_k\in \Inj(n\times \omega,\omega) $ with $ f_k(j,t) = t $ for all $ t\in A_{k}^{(j)} $. As $ f_k\iota_j\in \M_{ A_{k}^{(j)} } $, we then get
 		\begin{align*}
 		\Phi[f_0,\dots, f_m;x_1,\dots,x_n] &= ((f_0 \iota_1, \dots, f_m \iota_1).x_1,\dots,(f_0 \iota_n, \dots, f_m \iota_n).x_n) \\
 		&= (i_0(f_0 \iota_1) \dots i_n(f_m \iota_1).x_1,\dots, i_0(f_0 \iota_n) \dots i_n(f_m \iota_n).x_n) \\
 		&= (x_1,\dots,x_n).
 		\end{align*}

 		For injectivity, let
		\begin{align*}
		f_0,\dots,f_m, g_0,\dots,g_m &\in \Inj(n\times\omega,\omega)\\
		(x_1,\dots,x_n),(y_1,\dots,y_n)&\in (X_1\times\dots\times X_n)_m
		\end{align*}
		such that $\Phi[f_0,\dots,f_m;x_1,\dots,x_n] \allowbreak=\allowbreak \Phi[g_0,\dots,g_m;y_1,\dots,y_n]$ and we want to show that $[f_0,\dots,f_m;x_1,\dots,x_n] = [g_0,\dots,g_m;y_1,\dots,y_n]$ in $ E{\Inj}(n\times \omega, \omega) \times_{{\EM}^n} (X_1\times\dots\times X_n) $.

 		For this, we will first consider the special case that $ f_i = g_i $ for $ 0\leq i\leq m $. Then $(f_0\iota_j,\dots,f_m\iota_j).x_j=(f_0\iota_j,\dots,i_m\iota_j).y_j$ for any $j$ by definition of $\Phi$, hence also $x_j=y_j$ by Lemma~\ref{lem:injAct} as claimed.

 		In the general case we pick for every $ 0\leq k\leq m $ and $1\le j\le n$ co-infinite sets $ {A'}_{k}^{(j)}, {B'}_{k}^{(j)}\subset \omega $ such that $ x_j $ is $ k $-supported on $  {A'}_{k}^{(j)} $ and $ y_j $ is $ k $-supported on $ {B'}_{k}^{(j)} $. By Lemma \ref{lem:ksupp}, $ (f_0 \iota_j, \dots, f_m \iota_j).x_j = (g_0 \iota_j, \dots, g_m \iota_j).y_j  $ is $ k $-supported on $ f_k\iota_j( {A'}_{k}^{(j)}) $ and on $ g_k\iota_j({B'}_{k}^{(j)}) $, whence on $C_k^{(j)}\mathrel{:=} f_k\iota_j( {A'}_{k}^{(j)})\cap g_k\iota_j({B'}_{k}^{(j)})$ by Proposition~\ref{prop:capSupp}. Lemma~\ref{lem:kfsupp} then shows that $x_j$ is $k$-supported on $A_k^{(j)}\mathrel{:=} (f_k\iota_j)^{-1}(C_k^{(j)})$ while $y_j$ is $k$-supported on $B_k^{(j)}\mathrel{:=} (g_k\iota_j)^{-1}(C_k^{(j)})$. Altogether, we have therefore achieved that $x_j$ and $y_j$ are $k$-supported on co-infinite sets $A_k^{(j)}$ and $B_k^{(j)}$, respectively, such that  $(f_k\iota_j)(A_k^{(j)})=(g_k\iota_j)(B_k^{(j)})$.

		By injectivity of $ f_k\iota_j $ and $ g_k\iota_j $, there is then a (unique) bijection $\sigma_k^{(j)}\mskip-.5\thinmuskip\colon A_k^{(j)}\to B_k^{(j)}$ with $g_k\iota_j\sigma^{(j)}_k=f_k\iota_j|_{A_k^{(j)}}$, which we can further extend to an injection $s_k^{(j)}\in\mathcal M$ as $B_k^{(j)}$ is co-infinite. But then $g_k(s_k^{(1)}\amalg\cdots\amalg s_k^{(n)})$ agrees with $f_k$ on $\bigcup_{j=1}^n\{j\}\times A_k^{(j)}$, so Lemma~\ref{lem:f.xEqualg.x} shows
		\begin{equation}\label{eq:mthm-meq}
			\begin{aligned}
			&[f_0,\dots,f_m;x_1,\dots,x_n]\\
			&\qquad= [g_0(s_0^{(1)}\amalg\cdots\amalg s_0^{(n)}),\dots,g_m(s_m^{(1)}\amalg\cdots\amalg s_m^{(n)});x_1,\dots,x_n]\\
			&\qquad= [g_0,\dots,g_m;(s_0^{(1)},\dots,s_m^{(1)}).x_1,\dots,(s_0^{(n)},\dots,s_m^{(n)}).x_n];
			\end{aligned}
		\end{equation}
 		in particular their images under $\Phi$ agree, whence
 		\begin{align*}
 		\Phi[g_0,\dots,g_m;y_1,\dots,y_n]
 		&=\Phi[f_0,\dots,f_m;x_1,\dots,x_n]\\
 		&= \Phi[g_0,\dots,g_m;(s_0^{(1)},\dots,s_m^{(1)}).x_1,\dots,(s_0^{(n)},\dots,s_m^{(n)}).x_n].
 		\end{align*}
 		The special case above then shows that $ (s_0^{(j)},\dots,s_m^{(j)}).x_j=y_j $, and plugging this into $(\ref{eq:mthm-meq})$ finishes the proof.
 	\end{proof}

 	As a consequence of the theorem, we can also express the right adjoint $ (-)^{\mu} $ in terms of the operadic product:

 	\begin{prop}\label{prop:mildFunOperadic}
 		Let $ X $ be any $ \EM $-simplicial set. Then $ \Phi $ restricts to an isomorphism $ E{\Inj}(2\times \omega, \omega) \times_{{\EM}^2} (X\times *) \to X^{\mu} \times * \cong X^{\mu} $.
 	\end{prop}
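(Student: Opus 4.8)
The plan is to deduce the proposition formally from Theorem~\ref{thm:boxprodOperadic} together with naturality of $\Phi$, reducing the whole statement to a single surjectivity assertion. Throughout I write $\Phi_X$ and $\Phi_{X^\mu}$ for the components of $\Phi$ at $(X,*)$ and $(X^\mu,*)$, respectively. First I would record that $X^\mu\boxtimes *=X^\mu\times *$ as simplicial subsets of $X\times *$: the unique simplex of $*$ is $\EM$-fixed, hence $k$-supported on $\emptyset$, so the box product condition on a pair $(w,*)$ collapses to $w$ being co-infinitely supported. Since $X^\mu$ and $*$ are mild, Theorem~\ref{thm:boxprodOperadic} (with $n=2$) then says exactly that $\Phi_{X^\mu}\colon E\Inj(2\times\omega,\omega)\times_{\EM^2}(X^\mu\times *)\to X^\mu\times *\cong X^\mu$ is an isomorphism.

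Writing $j\colon X^\mu\hookrightarrow X$ for the inclusion and $\bar\iota$ for the induced map of operadic products, naturality of $\Phi$ yields $\Phi_X\circ\bar\iota=(j\times\id)\circ\Phi_{X^\mu}$, i.e.\ $\Phi_X\circ\bar\iota=j\circ\Phi_{X^\mu}$ under the identifications $X^\mu\times *\cong X^\mu$ and $X\times *\cong X$. From this it follows purely formally that once $\bar\iota$ is known to be surjective, $\Phi_X$ is a levelwise bijection onto the simplicial subset $X^\mu\subset X$: surjectivity of $\bar\iota$ makes the image of $\Phi_X$ equal to the image of $j\circ\Phi_{X^\mu}$, which is $X^\mu$; and if $\Phi_X(\bar\iota P)=\Phi_X(\bar\iota Q)$, then $\Phi_{X^\mu}(P)=\Phi_{X^\mu}(Q)$ (as $j$ is injective), whence $P=Q$ by injectivity of $\Phi_{X^\mu}$ and therefore $\bar\iota P=\bar\iota Q$. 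As $\Phi_X$ is an $\EM$-equivariant simplicial map, such a levelwise bijection is the desired isomorphism.

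It therefore remains to show that $\bar\iota$ is surjective, i.e.\ that every $m$-simplex $[f_0,\dots,f_m;x,*]$ of the source admits a representative whose $X$-coordinate lies in $X^\mu_m$. I would set $z\mathrel{:=}(f_0\iota_1,\dots,f_m\iota_1).x$, the $X$-coordinate of $\Phi_X[f_0,\dots,f_m;x,*]$; by Lemma~\ref{lem:ksupp} this is $k$-supported on $\im(f_k\iota_1)$, which is co-infinite because its complement contains the infinite set $\im(f_k\iota_2)$, so indeed $z\in X^\mu_m$. Using the defining relation of the operadic product (Construction~\ref{const:operadBox}) in the second, terminal variable, one has for all $h_0,\dots,h_m\in\Inj(2\times\omega,\omega)$ and all $v^{(0)},\dots,v^{(m)}\in\M$ that
\[
[h_0,\dots,h_m;z,*]=[h_0\circ((f_0\iota_1)\amalg v^{(0)}),\dots,h_m\circ((f_m\iota_1)\amalg v^{(m)});x,*].
\]
The goal is then to choose the $v^{(k)}$ and $h_k$ so that $h_k\circ((f_k\iota_1)\amalg v^{(k)})=f_k$ for every $k$, which exhibits $[f_\bullet;x,*]=[h_\bullet;z,*]$ as lying in the image of $\bar\iota$.

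To construct such $h_k$, I would prescribe $h_k$ on $\{1\}\times\im(f_k\iota_1)$ by $(1,c)\mapsto c$ and on $\{2\}\times\im(v^{(k)})$ by $(2,v^{(k)}(b))\mapsto f_k\iota_2(b)$; these partial assignments are injective with disjoint images $\im(f_k\iota_1)$ and $f_k\iota_2(\im v^{(k)})\subset\im(f_k\iota_2)$, and they already force $h_k\circ((f_k\iota_1)\amalg v^{(k)})=f_k$. It then remains to extend $h_k$ injectively over the rest of $2\times\omega$, avoiding the two images above. This is exactly the step I expect to be the main obstacle: if $f_k$ happens to be surjective there is no room left, so one cannot take $v^{(k)}=\id$. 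The fix is to choose each $v^{(k)}$ with co-infinite image (e.g.\ $v^{(k)}(b)=2b$); then $\im(f_k\iota_2)\setminus f_k\iota_2(\im v^{(k)})$ is infinite, providing the slack needed to extend $h_k$ to a genuine injection $2\times\omega\to\omega$ irrespective of $\im(f_k)$. With this choice the construction closes up, completing the proof of surjectivity and hence of the proposition.
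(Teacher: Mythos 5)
Your formal skeleton is sound and matches the paper's: identify $X^\mu\boxtimes *=X^\mu\times *$, apply Theorem~\ref{thm:boxprodOperadic} to see that $\Phi_{X^\mu}$ is an isomorphism, and use naturality to reduce everything to surjectivity of the comparison map $\bar\iota$. The gap is in your construction of the injections $h_k$. The forced assignment $(2,v^{(k)}(b))\mapsto f_k\iota_2(b)$ has image \emph{all} of $\im(f_k\iota_2)$: every value $f_k(2,b)$ is attained at the point $(2,v^{(k)}(b))$, which lies in the domain $\{2\}\times\im(v^{(k)})$ of the partial map. It is not the smaller set $f_k\iota_2(\im v^{(k)})$ that you claim. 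Consequently the two forced pieces of $h_k$ together have image exactly $\im(f_k\iota_1)\cup\im(f_k\iota_2)=\im(f_k)$, and the remaining part of $2\times\omega$ (which is always infinite, as it contains $\{1\}\times(\omega\setminus\im(f_k\iota_1))\supset\{1\}\times\im(f_k\iota_2)$ up to relabelling) must be mapped injectively into $\omega\setminus\im(f_k)$. When $f_k$ is surjective this is impossible, and taking $v^{(k)}$ with co-infinite image does not create any room: the ``slack'' $\im(f_k\iota_2)\setminus f_k\iota_2(\im v^{(k)})$ you point to consists entirely of values already occupied by the forced part of $h_k$.

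The missing ingredient is the paper's preliminary normalization, which uses the operadic relation in the \emph{second} ($*$-)coordinate rather than the first: for $d\in\M$ with $d(x)=2x$ one has $[f_0,\dots,f_m;x,*]=[f_0(\id\amalg d),\dots,f_m(\id\amalg d);x,(d,\dots,d).*]=[f_0(\id\amalg d),\dots,f_m(\id\amalg d);x,*]$, so one may assume from the start that each $f_k$ misses an infinite set. Note that this replacement does not change $f_k\iota_1$, hence not your simplex $z$ either. After it, $\omega\setminus\im(f_k)$ is infinite and your extension of $h_k$ to a genuine injection $2\times\omega\to\omega$ goes through; the rest of your argument is then correct.
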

 	\begin{proof}
		Consider the naturality square
 		\begin{equation*}
 		\begin{tikzcd}
 		{E{\Inj}(2\times \omega, \omega) \times_{{\EM}^2} (X^{\mu}\times *)} \arrow[r] \arrow[d,"\Phi"',"\cong"] & {E{\Inj}(2\times \omega, \omega) \times_{{\EM}^2} (X\times *)} \arrow[d,"\Phi",]\\
 		{X^{\mu}} \arrow[r,hook] & {X}
 		\end{tikzcd}
 		\end{equation*}
		in which the left hand vertical map is an isomorphism by Theorem~\ref{thm:boxprodOperadic} while the lower horizontal map is injective by definition. It follows formally that also the top horizontal map is injective. We will now prove that it is also surjective, whence an isomorphism, which will then immediately imply the claim.

 		For this we take $ f_0,\dots,f_m\in {\Inj}(2\times \omega, \omega) $ and $ x\in X_m $ arbitrary. If we now let $d\in\mathcal M^\mu$ be the map with $d(x)=2x$, then
		\begin{equation*}
			[f_0(\id\amalg d),\dots,f_m(\id\amalg d);x,*]=[f_0,\dots,f_m;x,(d,\dots,d).*]=[f_0,\dots,f_m;x,*]
		\end{equation*}
		by definition of the operadic product; up to replacing $f_j$ by $f_j(\id\amalg d)$ we may therefore assume that each $f_j$ misses an infinite set $A_j\subset\omega$. We now pick for each $j$ an injection $\phi_j\colon \{1,3,5,\dots\}\to A_j$ and we define $g_j\colon 2\times\omega\to\omega$ via
		\begin{equation*}
			g_j(i,x)=\begin{cases}
				f_j(i,x/2) & \text{if $i=1$ and $x$ is even}\\
				\phi_j(x) & \text{if $i=1$ and $x$ is odd}\\
				f_j(i,x) & \text{otherwise}.
			\end{cases}
		\end{equation*}
		Then $g_j$ is again injective and $g_j(d\amalg\id)=f_j$, so that
		\begin{equation*}
			[f_0,\dots,f_m;x,*]=[g_0,\dots,g_m;(d,\dots,d).x,*].
		\end{equation*}
		But by Lemma~\ref{lem:ksupp} $(d,\dots,d).x$ is $k$-supported on the co-infinite set $\im(d)$ for all $k$, i.e.~$(d,\dots,d).x\in X^\mu$, finishing the proof.
 	\end{proof}

	\subsection{Further categorical properties} The operadic product clearly preserves colimits in each variable separately. Theorem~\ref{thm:boxprodOperadic} therefore implies:

	\begin{cor}\label{cor:boxprod-closed}
		The box product on $\EMSSetm$ preserves colimits in each variable separately.\qed
	\end{cor}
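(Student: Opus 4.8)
The plan is to deduce the statement from the corresponding property of the operadic product, transported along the natural isomorphism of Theorem~\ref{thm:boxprodOperadic}. I would proceed in three steps. First, I would check that the operadic product of Construction~\ref{const:operadBox} preserves colimits in each variable separately. Fixing all but the $i$-th input, the functor sending $X_i$ to $E\Inj(n\times\omega,\omega)\times_{\EM^n}X_1\times\cdots\times X_n$ is a coequalizer of the two functors
\begin{equation*}
X_i\mapsto E\Inj(n\times\omega,\omega)\times\EM^n\times X_1\times\cdots\times X_n,\qquad X_i\mapsto E\Inj(n\times\omega,\omega)\times X_1\times\cdots\times X_n,
\end{equation*}
each of which is of the form $X_i\mapsto K\times X_i$ for a fixed $E\mathcal M$-simplicial set $K$. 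Since both finite cartesian products and colimits in $\EMSSet$ are computed on underlying simplicial sets, and the category of simplicial sets is cartesian closed, each such functor preserves colimits; as a coequalizer is itself a colimit and colimits commute with colimits, the operadic product therefore preserves colimits in the $i$-th variable. This is the ``clearly'' in the preamble.

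Second, I would record that colimits in $\EMSSetm$ agree with those in $\EMSSet$: by Lemma~\ref{lemma:tau-mu}\,(\ref{item:tm-mildFunctor}) the inclusion $\EMSSetm\hookrightarrow\EMSSet$ is a fully faithful left adjoint, hence preserves (indeed creates) all small colimits.

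Finally, I would combine the two. For a fixed mild $Y$, Theorem~\ref{thm:boxprodOperadic} supplies a natural isomorphism between $Y\boxtimes(-)\colon\EMSSetm\to\EMSSetm$ and the restriction to mild inputs of the operadic product $E\Inj(2\times\omega,\omega)\times_{\EM^2}Y\times(-)$, whose values are again mild by Proposition~\ref{prop:boxprod}. Given a colimit diagram in $\EMSSetm$, its colimit coincides with the colimit formed in $\EMSSet$ by the second step; the operadic product preserves that colimit by the first step; and the result is mild, hence is the colimit in $\EMSSetm$. Thus $Y\boxtimes(-)$ preserves colimits, and symmetrically for the other variable. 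I do not anticipate a genuine obstacle: the only point requiring care is the compatibility of colimits in the mild subcategory with those in the ambient category, which is exactly what the left-adjoint property of the inclusion provides.
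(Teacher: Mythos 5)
Your proposal is correct and follows exactly the paper's route: the paper deduces the corollary from Theorem~\ref{thm:boxprodOperadic} together with the observation that the operadic product of Construction~\ref{const:operadBox} preserves colimits in each variable, which is precisely your argument. You merely spell out the details the paper leaves implicit (the coequalizer presentation, cartesian closedness of $\SSet$, and the fact that colimits in $\EMSSetm$ are created in $\EMSSet$ via Lemma~\ref{lemma:tau-mu}), all of which are handled correctly.
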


 	On the other hand, we can now prove:

	\begin{thm}\label{thm:mild-presentable}
		The functor $(-)^\mu\colon\EMSSet\to\EMSSetm$ admits a right adjoint, exhibiting $\EMSSetm$ as an accessible Bousfield localization of $\EMSSet$. In particular, $\EMSSetm$ is locally presentable.
		\begin{proof}
			By Proposition~\ref{prop:mildFunOperadic} the endofunctor $(-)^\mu$ of $\EMSSet$ is isomorphic to $E\Inj(2\times\omega,\omega)\times_{E\mathcal M^2}({-}\times *)$, hence in particular cocontinuous. As the enriched functor category $\EMSSet$ is locally presentable, the Special Adjoint Functor Theorem \cite[Corollary~5.5.2.9]{lurie} therefore implies that it admits a right adjoint $R$, which we can restrict to $\EMSSetm$ to obtain the desired right adjoint of $(-)^\mu\colon\EMSSet\to\EMSSetm$. Note that this is automatically fully faithful, as the \emph{left} adjoint of $(-)^\mu$ is simply the inclusion, hence fully faithful.

			It only remains to show that the composite $R(-)^\mu\colon \EMSSet\to\EMSSet$ is accessible. However, $(-)^\mu$ is itself again a right adjoint, whence so is $R(-)^\mu$. As right adjoint functors of locally presentable categories are accessible \cite[Proposition~5.4.7.7]{lurie}, the claim follows.
		\end{proof}
	\end{thm}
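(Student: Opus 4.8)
The plan is to obtain the right adjoint from an adjoint functor theorem, so the entire argument rests on verifying that $(-)^\mu$ is cocontinuous and that $\EMSSet$ is locally presentable. The delicate point is cocontinuity: by Lemma~\ref{lemma:tau-mu} the functor $(-)^\mu\colon\EMSSet\to\EMSSetm$ is \emph{right} adjoint to the inclusion, so it manifestly preserves limits and there is no formal reason to expect it to preserve colimits as well. To get around this I would invoke Proposition~\ref{prop:mildFunOperadic}: composing with the inclusion back into $\EMSSet$, it identifies the endofunctor $(-)^\mu$ with the operadic product $E\Inj(2\times\omega,\omega)\times_{\EM^2}({-}\times *)$. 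Since the operadic product is defined as a quotient of a cartesian product and hence preserves colimits in each variable separately, this description makes cocontinuity of $(-)^\mu$ evident.

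Granting cocontinuity, the construction of the right adjoint is formal. The category $\EMSSet$ is an enriched functor category and thus locally presentable, so a cocontinuous endofunctor admits a right adjoint $R$ by the special (presentable) adjoint functor theorem. Restricting $R$ along the inclusion $\EMSSetm\hookrightarrow\EMSSet$ then yields the sought right adjoint to $(-)^\mu\colon\EMSSet\to\EMSSetm$.

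It remains to recognise this adjunction as an accessible Bousfield localization. Full faithfulness of the right adjoint is purely formal: we have an adjoint triple $\incl\dashv(-)^\mu\dashv R$ whose leftmost functor, the inclusion, is fully faithful, and for adjoint triples the mate correspondence forces the rightmost functor $R$ to be fully faithful as well. For accessibility I would again use that $(-)^\mu$ is itself a right adjoint (Lemma~\ref{lemma:tau-mu}); hence the localization endofunctor $R(-)^\mu$ is a composite of right adjoints between locally presentable categories, so it is a right adjoint and therefore accessible. Finally, an accessible localization of a locally presentable category is again locally presentable, which yields the concluding clause.

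The single genuine obstacle is the cocontinuity of $(-)^\mu$: this is where the concrete combinatorics of the injections operad enters, via the operadic reinterpretation of Proposition~\ref{prop:mildFunOperadic}, and without it the remaining formal machinery would have nothing to act on. Everything after that step is standard bookkeeping with presentable categories and adjoint triples.
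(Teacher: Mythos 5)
Your proposal is correct and follows essentially the same route as the paper: cocontinuity of $(-)^\mu$ via the operadic description from Proposition~\ref{prop:mildFunOperadic}, the adjoint functor theorem for locally presentable categories to produce $R$, full faithfulness of $R$ from full faithfulness of the inclusion in the adjoint triple, and accessibility of $R(-)^\mu$ as a composite of right adjoints. No gaps.
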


	\begin{rem}
		The analogue of the previous theorem in the tame world does \emph{not} hold: the right adjoint $(-)^\tau\colon\EMSSet\to\EMSSett$ is not cocontinuous. As a simple counterexample, the right $\mathcal M$-action on $E\mathcal M$ via precomposition defines a diagram $F\colon B\mathcal M^{\op}\to \EMSSet$ with colimit the terminal $E\mathcal M$-simplicial set (apply Proposition~\ref{prop:equalInM/M_A} in the special case $A=\emptyset$), hence also $(\colim F)^\tau=*$. However, $E\mathcal M^\tau=\emptyset$ (Examples~\ref{ex:mildM} and~\ref{ex:EX-supp}), hence also $\colim \big((-)^\tau\circ F\big)=\emptyset$.
	\end{rem}

 	\section{Commutative \texorpdfstring{$*$}{*}-algebras and their homotopy theory}\label{sec:*-alg}
	\subsection{A reminder on global homotopy theory}
	We will now recall how $E\mathcal M$-simplicial sets can be used to model \emph{global homotopy theory} with respect to finite groups in the sense of \cite{schwede-book}, and more generally how $E\mathcal M$-$G$-simplicial sets for finite groups $G$ (i.e.~simplicial sets with an action of $E\mathcal M\times G$, or equivalently $G$-objects in $\cat{$\bm{E\mathcal M}$-SSet}$) model \emph{$G$-global homotopy theory} in the sense of \cite{lenzGglobal}. For this we will need the following terminology:

	\begin{defn}
		Let $H$ be a finite group. A \emph{complete $H$-set universe} is a countable $H$-set into which any other countable $H$-sets embeds equivariantly. A finite subgroup $H\subset\mathcal M$ is called \emph{universal} if $\omega$ with the restriction of the tautological $\mathcal M$-action is a complete $H$-set universe.
	\end{defn}

	Universal subgroups of $\mathcal M$ abound: any finite group $H$ admits an injective homomorphism $H\to\mathcal M$ with universal image, and this injection is unique up to conjugating with an invertible element of $\mathcal M$, see \cite[Lemma~1.2.8]{lenzGglobal}.

	\begin{defn}
		Let $G$ be a finite group, let $H\subset\mathcal M$ be a subgroup, and let $\phi\colon H\to G$ be any homomorphism. If $X$ is an $(E\mathcal M\times G)$-simplicial set, then we write $X^\phi$ for the fixed points with respect to the \emph{graph subgroup} $\Gamma_{H,\phi}=\{(h,\phi(h)) :h\in H\}$.

		We call an $(E\mathcal M\times G)$-equivariant map $f\colon X\to Y$ a \emph{$G$-global weak equivalence} if the induced map $f^\phi\colon X^\phi\to Y^\phi$ is a weak homotopy equivalence for every \emph{universal} $H\subset\mathcal M$ and every homomorphism $\phi\colon H\to G$.
	\end{defn}

	\begin{prop}\label{prop:g-gl-model}
		Let $G$ be any finite group. Then the category $\cat{$\bm{E\mathcal M}$-$\bm G$-SSet}$ of $(E\mathcal M\times G)$-simplicial sets and equivariant maps admits a combinatorial, simplicial, proper model structure with weak equivalences the $G$-global weak equivalences and generating cofibrations
		\begin{equation*}
			\{E\mathcal M\times_\phi G\times(\del\Delta^n\hookrightarrow\Delta^n) : \text{$H\subset\mathcal M$ universal, $\phi\colon H\to G$, $n\ge0$}\},
		\end{equation*}
		where `$\times_\phi$' denotes the quotient of the product by the diagonal of the evident right $H$-action on $E\mathcal M$ and the right $H$-action on $G$ via $\phi$. 	We call this model structure the \emph{$G$-global model structure}.

		Moreover, $\cat{$\bm{E\mathcal M}$-$\bm G$-SSet}$ also admits a \emph{$G$-global injective model structure} with the same weak equivalences and with cofibrations those maps that are underlying cofibrations of simplicial sets. This model structure is again combinatorial, simplicial, and proper, and the identity constitutes a Quillen equivalence
		\begin{equation}\label{eq:id-QE}
			\cat{$\bm{E\mathcal M}$-$\bm G$-SSet}_\textup{$G$-gl}\rightleftarrows\cat{$\bm{E\mathcal M}$-$\bm G$-SSet}_\textup{inj.~$G$-gl}.
		\end{equation}
		Finally, the $G$-global weak equivalences are stable under filtered colimits.
	\end{prop}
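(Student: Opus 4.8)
The plan is to establish Proposition~\ref{prop:g-gl-model} by recognizing it as an instance of the general machinery for constructing $G$-global model structures on functor categories, most of which is already set up in \cite{lenzGglobal}. The category $\cat{$\bm{E\mathcal M}$-$\bm G$-SSet}$ is a category of simplicial presheaves (namely $(E\mathcal M\times G)$-objects in simplicial sets), so it is locally presentable and enriched, tensored, and cotensored over simplicial sets; the enrichment is via the cartesian simplicial structure, inherited from $\cat{$\bm{E\mathcal M}$-SSet}$.

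First I would construct the projective (i.e.~non-injective) $G$-global model structure by a standard cofibrant-generation argument. The generating cofibrations are the maps $E\mathcal M\times_\phi G\times(\del\Delta^n\hookrightarrow\Delta^n)$ listed in the statement, and the generating acyclic cofibrations are the corresponding horn inclusions together with the equivariant acyclicity witnesses; here the key input is that the free objects $E\mathcal M\times_\phi G$ corepresent the $\phi$-fixed points in the sense that $\Hom(E\mathcal M\times_\phi G\times K, Y)\cong\Hom(K, Y^\phi)$ for any simplicial set $K$, which reduces the characterization of the weak equivalences and fibrations to statements about the ordinary simplicial model structure on the fixed-point simplicial sets $Y^\phi$. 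One then verifies Quillen's small object conditions: local presentability gives the smallness hypotheses, and the two-out-of-three and retract axioms for the $G$-global weak equivalences follow since they are detected objectwise on $\phi$-fixed points by weak homotopy equivalences of simplicial sets. Properness and simpliciality are likewise checked on fixed points, using that $(-)^\phi$ commutes with the relevant (co)tensors and with pullbacks and pushouts along (co)fibrations. This step is essentially a transcription of the construction in \cite[Section~1.2]{lenzGglobal}, the only novelty being the bookkeeping of the extra $G$-action, and I expect it to be routine.

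Next I would construct the $G$-global \emph{injective} model structure. Since $\cat{$\bm{E\mathcal M}$-$\bm G$-SSet}$ is locally presentable and the $G$-global weak equivalences form an accessible class of maps closed under the two-out-of-three axiom (accessibility because they are detected by an accessible functor into a product of simplicial sets via fixed points, and the acyclicity condition is itself accessible), the existence of a model structure whose cofibrations are the underlying simplicial cofibrations follows from the general injective-model-structure theorem (e.g.~\cite[Proposition~A.2.8.2]{lurie}). That the two model structures share the same weak equivalences is built into the construction, and since every projective cofibration is in particular an underlying cofibration, the identity is a left Quillen functor from the projective to the injective structure; as the weak equivalences coincide, it is automatically a Quillen equivalence, giving~\eqref{eq:id-QE}.

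The main obstacle, and the one genuinely requiring attention, is the final claim that $G$-global weak equivalences are stable under filtered colimits. Here I would argue as follows: filtered colimits in $\cat{$\bm{E\mathcal M}$-$\bm G$-SSet}$ are computed underlying, and—crucially—the fixed-point functor $(-)^\phi$ for a graph subgroup $\Gamma_{H,\phi}$ of a \emph{finite} group $H$ commutes with filtered colimits of simplicial sets, since fixed points under a finite group are a finite limit and filtered colimits commute with finite limits in $\sSet$. Therefore for a filtered diagram of $G$-global weak equivalences $\{f_i\colon X_i\to Y_i\}$, the induced map on colimits satisfies $(\colim_i X_i)^\phi\cong\colim_i(X_i^\phi)\to\colim_i(Y_i^\phi)\cong(\colim_i Y_i)^\phi$, and this is a filtered colimit of weak homotopy equivalences of simplicial sets; since weak homotopy equivalences of simplicial sets are themselves closed under filtered colimits (again because homotopy groups, being built from finitely many simplices, commute with filtered colimits), the map on fixed points is a weak homotopy equivalence for every universal $H$ and every $\phi\colon H\to G$, which is exactly the assertion. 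The only subtlety to verify carefully is the interchange of fixed points with filtered colimits, which rests precisely on the finiteness of the groups $H$ appearing in the definition of universal subgroups.
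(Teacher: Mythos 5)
Your proposal is correct in substance, but it takes a genuinely different route from the paper: the paper simply quotes both model structures from \cite[Proposition~1.1.2]{lenzGglobal} (see also Corollary~1.2.34 of \emph{op.~cit.} for the first one) and only argues the Quillen equivalence $(\ref{eq:id-QE})$ by hand, via exactly the observation you make --- the two structures have the same weak equivalences and every generating cofibration of the left-hand structure is a monomorphism. What you do instead is re-derive everything: the projective structure by transfer along the fixed-point functors $(-)^\phi$ corepresented by the objects $E\mathcal M\times_\phi G$, the injective structure from general locally-presentable machinery, and the closure of the weak equivalences under filtered colimits by the standard argument (finiteness of $H$ makes $(-)^\phi$ commute with filtered colimits, and weak homotopy equivalences of simplicial sets are closed under filtered colimits) --- this last point is the one place where your write-up is actually more detailed than the paper, which inherits the claim from the cited result. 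Two points would need care in a full write-up. First, \cite[Proposition~A.2.8.2]{lurie} is not the right citation for the injective structure: that result produces the injective model structure with the \emph{objectwise} weak equivalences, whereas here the weak equivalences are the strictly smaller class of $G$-global ones; the correct tool is Smith's recognition theorem \cite[Proposition~A.2.6.13]{lurie} (recorded in the paper as Proposition~\ref{prop:lurie-criterion}), applied to a generating set for the monomorphisms together with the $G$-global weak equivalences --- your remarks about accessibility and two-out-of-three are exactly the hypotheses one then verifies, together with the observation that a map with the right lifting property against all monomorphisms has $\phi$-fixed points an acyclic Kan fibration. Second, the assertion that weak equivalences and fibrations of the projective structure are ``detected on fixed points'' hides the cellularity conditions (that $(-)^\phi$ carries pushouts along the generating cofibrations, transfinite compositions, and retracts to the corresponding constructions of simplicial sets); this is the real content of the transfer and is where the structure of $E\mathcal M\times_\phi G$ enters. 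You defer this to \cite[Section~1.2]{lenzGglobal}, which is legitimate and is, in effect, what the paper itself does.
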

	\begin{proof}
		These model structures are special cases of \cite[Proposition~1.1.2]{lenzGglobal}, also see Corollary~1.2.34 of \emph{op.~cit.} for the former model structure. Finally, the identity defines a Quillen equivalence $(\ref{eq:id-QE})$ as both model structures have the same weak equivalences and any (generating) cofibration of the $G$-global model structure is an injective cofibration.
	\end{proof}

	As already promised above, for $G=1$ this specializes to a model of global homotopy theory for finite groups: there is a zig-zag of Quillen equivalences to Schwede's $\mathcal F\mskip-.5\thinmuskip in$-global model structure on orthogonal spaces \cite[Theorem~1.4.8]{schwede-book}, see \cite[Theorems~1.4.30 and~1.4.31, Corollary~1.5.29]{lenzGglobal}. On the other hand, for general $G$ the above refines $G$-equivariant homotopy theory in the following sense:

	\begin{defn}
		A map $f$ in $\cat{$\bm{E\mathcal M}$-$\bm G$-SSet}$ is called a \emph{$G$-equivariant weak equivalence} if $f^\phi$ is a weak homotopy equivalence for every universal $H\subset\mathcal M$ and every \emph{injective} $\phi\colon H\to G$.
	\end{defn}

	\begin{prop}\label{prop:g-gl-vs-g-equiv}
		Fix an embedding $j\colon G\to\mathcal M$ with universal image and write $\delta\mathrel{:=}(j,\id)\colon G\to\mathcal M\times G$. Then $\delta^*\colon\cat{$\bm{E\mathcal M}$-$\bm G$-SSet}_\textup{$G$-gl}\to\cat{$\bm{G}$-SSet}$ is left Quillen for the usual $G$-equivariant model structure on the target, and the induced adjunction $$\Ho(\cat{$\bm{E\mathcal M}$-$\bm G$-SSet})\rightleftarrows\Ho(\cat{$\bm G$-SSet})$$ on homotopy categories is a Bousfield localization at the $G$-equivariant weak equivalences.
	\end{prop}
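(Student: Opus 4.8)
The plan is to establish three things in turn: that $\delta^*$ is left Quillen; that a map is inverted by $\mathbb L\delta^*$ exactly when it is a $G$-equivariant weak equivalence; and that the derived right adjoint $\mathbb R\delta_*$ is fully faithful, which together say precisely that $\mathbb L\delta^*\dashv\mathbb R\delta_*$ exhibits $\Ho(\cat{$\bm G$-SSet})$ as the Bousfield localization of $\Ho(\cat{$\bm{E\mathcal M}$-$\bm G$-SSet})$ at the $G$-equivariant weak equivalences. The workhorse throughout is the fixed-point identity $(\delta^*X)^K=X^{\Gamma_{j(K),\psi_K}}$ for $K\le G$, where $\psi_K\colon j(K)\to G$ sends $j(k)\mapsto k$: here $\psi_K$ is injective, and $j(K)$ is universal because the restriction to $K$ of a complete $G$-set universe is a complete $K$-set universe (any countable $K$-set embeds $K$-equivariantly into the restriction of its induced $G$-set).

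For left Quillenness I would recall that in the $G$-equivariant model structure every object is cofibrant and the cofibrations are exactly the monomorphisms; since $\delta^*$ visibly preserves underlying injections it preserves cofibrations, and by the displayed identity it sends $G$-global weak equivalences to maps that are weak equivalences on all fixed points $(-)^K$, i.e.\ to weak equivalences of $G$-simplicial sets. A left adjoint preserving cofibrations and all weak equivalences is left Quillen; in particular $\mathbb L\delta^*$ is computed by $\delta^*$ itself, with no cofibrant replacement needed.

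To identify the inverted maps I would show $\delta^*f$ is a weak equivalence if and only if $f$ is a $G$-equivariant weak equivalence. The cases $(H,\phi)=(j(K),\psi_K)$ are covered by the above; for a general universal $H\subset\mathcal M$ and injective $\phi\colon H\to G$ with image $K=\phi(H)$, both $\phi^{-1}\colon K\to\mathcal M$ and $j|_K\colon K\to\mathcal M$ have universal image, so by \cite[Lemma~1.2.8]{lenzGglobal} there is an invertible $w\in\mathcal M$ with $w\phi^{-1}(k)w^{-1}=j(k)$ for all $k\in K$. Acting by the invertible element $(w,e)$ then conjugates $\Gamma_{H,\phi}$ to $\Gamma_{j(K),\psi_K}$ and hence yields a natural isomorphism $X^\phi\cong(\delta^*X)^K$; thus checking all universal $H$ and injective $\phi$ is the same as checking all $K\le G$.

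It remains to prove $\mathbb R\delta_*$ fully faithful, equivalently that the derived counit is an isomorphism; this is the crux. Since $\delta^*$ preserves weak equivalences, for fibrant $Y$ this reduces to the ordinary counit $\delta^*\delta_*Y\to Y$ being a $G$-equivariant weak equivalence, where $\delta_*Y=\maps_G(\EM\times G,Y)$ and the counit is evaluation at the identity. The main computation I would carry out is the $G$-set isomorphism $(\EM\times G)/\delta(K)\cong\EM\times G/K$, with $G$ acting on the first factor by $j$-conjugation, $g\cdot\xi=j(g)\xi j(g)^{-1}$, and by translation on $G/K$; this gives $(\delta^*\delta_*Y)^K\cong\maps_K(\EM,Y)$, under which the counit becomes restriction along the $K$-fixed vertex $\{\id\}\hookrightarrow\EM$. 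For every $L\le K$ the conjugation fixed points $(\EM)^L\cong E(\mathcal M^L)$ — with $\mathcal M^L$ the nonempty monoid of $L$-equivariant self-injections of $\omega$ — are contractible, so $\{\id\}\hookrightarrow\EM$ is a $K$-equivariant acyclic cofibration; as $Y$ is fibrant, the counit is therefore a weak equivalence on each $K$-fixed points, hence a $G$-equivariant weak equivalence. Together with the previous step this identifies the localization as being at the $G$-equivariant weak equivalences. I expect the identification of this double-coset $G$-set and the recognition of its fixed points as contractible $E$-constructions to be the principal obstacle; everything else is formal.
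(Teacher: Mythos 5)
Your argument is correct, but note that the paper itself does not prove this proposition: its ``proof'' is a citation to \cite[Theorem~1.2.92 and Proposition~1.2.95]{lenzGglobal}, so you have in effect reconstructed the content of those external results. Your three steps are all sound: cofibrations of the $G$-global model structure are levelwise injections and hence go to cofibrations of $\cat{$\bm G$-SSet}$, while the identity $(\delta^*X)^K=X^{\Gamma_{j(K),\psi_K}}$ together with universality of $j(K)$ (restriction of a complete universe) shows $\delta^*$ is homotopical; the conjugation argument via the uniqueness clause of \cite[Lemma~1.2.8]{lenzGglobal} correctly reduces all pairs $(H,\phi)$ with $\phi$ injective to the pairs $(j(K),\psi_K)$, so $\delta^*$ inverts exactly the $G$-equivariant weak equivalences; and the untwisting isomorphism $(\EM\times G)/\delta(K)\cong\EM\times G/K$ (which uses that elements of finite subgroups of $\M$ are bijections, so $j(g)^{-1}$ exists) identifies the counit on $K$-fixed points with restriction along $\{\id\}\hookrightarrow\EM$, which is a $K$-equivariant acyclic cofibration because $(\EM)^L\cong E(\M^L)$ is the nerve of an indiscrete groupoid on a nonempty set. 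Two small points deserve a sentence each in a final write-up: first, the fibrancy you invoke is fibrancy of $Y$ in the genuine model structure on $\cat{$\bm G$-SSet}$ (so that $\operatorname{res}^G_KY$ is fibrant and SM7 gives that evaluation at $\id$ is an acyclic fibration), which is the right thing since the derived counit is computed at a fibrant replacement in the target; second, to pass from ``$\mathbb{R}\delta_*$ is fully faithful and $\mathbb{L}\delta^*$ inverts exactly the $G$-equivariant weak equivalences'' to ``Bousfield localization \emph{at} the $G$-equivariant weak equivalences'' one should observe that the unit $X\to\delta_*(\delta^*X)^{\mathrm{fib}}$ is itself a $G$-equivariant weak equivalence (immediate from the triangle identity once the counit is known to be one), so that every map inverted by $\mathbb{L}\delta^*$ lies in the saturation of that class. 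Neither point is a gap so much as an elision of a standard step.
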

	\begin{proof}
		See \cite[Theorem~1.2.92 and Proposition~1.2.95]{lenzGglobal}.
	\end{proof}

	\begin{rem}
		In fact, \emph{loc.~cit.} shows that $\delta^*\colon \Ho(\cat{$\bm{E\mathcal M}$-$\bm G$-SSet})\to\Ho(\cat{$\bm G$-SSet})$ sits in a sequence of four adjoints $\delta^!\dashv\delta_!\dashv\delta^*\dashv\delta_*$, also cf.~\cite{rezkCohesion}.
	\end{rem}

	Finally, the same homotopy theory can also be modelled by tame actions:

	\begin{prop}\label{prop:tame-model}
		The{\hskip0pt minus 1pt} category{\hskip0pt minus 1pt} $\cat{$\bm{E\mathcal M}$-$\bm G$-SSet}^\tau${\hskip0pt minus 1pt} admits{\hskip0pt minus 1pt} a{\hskip0pt minus 1pt} combinatorial{\hskip0pt minus 1pt} model{\hskip0pt minus 1pt} structure with weak equivalences the $G$-global weak equivalences and with generating cofibrations given by the maps
		\begin{equation}\label{eq:gen-cof-tame}
			E\Inj(A,\omega)\times_\phi  G\times(\del\Delta^n\hookrightarrow\Delta^n)
		\end{equation}
		for all $n\ge0$, finite groups $H$, finite non-empty faithful $H$-sets $A$, and homomorphisms $\phi\colon H\to G$. We call this the \emph{positive $G$-global model structure}. It is simplicial, proper, and filtered colimits in it are homotopical.
	\end{prop}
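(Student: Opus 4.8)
The plan is to realise the positive $G$-global model structure as an instance of the general recognition machinery for combinatorial $G$-global model structures from \cite{lenzGglobal}, the same engine underlying Proposition~\ref{prop:g-gl-model}; concretely, this amounts to verifying the conditions of Jeff Smith's theorem for the proposed set $I^+$ of positive generating cofibrations and the class $W$ of $G$-global weak equivalences. First I would record that the ambient category $\cat{$\bm{E\mathcal M}$-$\bm G$-SSet}$ is a locally presentable enriched presheaf category and that $\cat{$\bm{E\mathcal M}$-$\bm G$-SSet}^\tau$ sits inside it as the full subcategory of tame objects; by the $G$-equivariant analogue of Lemma~\ref{lemma:tau-mu} this inclusion is a left adjoint with right adjoint $(-)^\tau$, so colimits in the tame category agree with those computed in the ambient one. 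Granting that the tame category is itself locally presentable (which I would deduce from \cite{lenzGglobal}, since, in contrast to the mild case treated in Theorem~\ref{thm:mild-presentable}, the coreflector $(-)^\tau$ is not cocontinuous and the slick argument there does not apply), the recognition theorem reduces everything to three points: that $W$ is an accessible, accessibly embedded, $2$-out-of-$3$- and retract-closed subcategory of the arrow category; that every $I^+$-injective lies in $W$; and that $W\cap I^+\text{-cof}$ is closed under pushouts and transfinite composition.

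The technical heart of the argument is a co-representability statement for the sources of the positive generators. Using the $G$-equivariant, simplicial upgrade of the Yoneda-type identification $\Hom_{\M}(\Inj(A,\omega),Y)\cong\{y\in Y : y\text{ supported on }A\}$ — which holds because $\Inj(A,\omega)$ is generated by the inclusion $A\hookrightarrow\omega$ and, by Lemma~\ref{lem:agreeSupp}-(\ref{item:aS-through-A}), two injections agreeing on $A$ act identically on an element supported on $A$ — I would show that for a finite non-empty faithful $H$-set $A$ and a homomorphism $\phi\colon H\to G$, maps out of $E\Inj(A,\omega)\times_\phi G$ compute a positive fixed-point functor on tame objects. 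Combining this with the characterisation of universal subgroups of $\M$ (any finite $H$ acting faithfully on $A$ embeds with image inside a universal subgroup, and $\omega$ is a complete universe), I would identify the effect of testing against $I^+$ with evaluating the graph-fixed-point functors $X\mapsto X^{\psi}$ for all universal $H'\subset\M$ and all $\psi\colon H'\to G$, exactly the functors detecting $W$. In particular this yields that the $I^+$-injectives are precisely the maps that are simplicial acyclic fibrations on all these fixed points, proving $I^+\text{-inj}\subseteq W$.

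With co-representability in hand, accessibility and $2$-out-of-$3$/retract-closure of $W$ follow from the corresponding facts for weak homotopy equivalences of simplicial sets together with the stability of $G$-global weak equivalences under filtered colimits recorded in Proposition~\ref{prop:g-gl-model}; closure of $W\cap I^+\text{-cof}$ under pushouts and transfinite composition again uses this filtered-colimit stability and a standard small-object argument, the relevant fixed-point functors commuting with the filtered colimits at issue because the sources of $I^+$ are finitely presentable. This produces the combinatorial model structure. Simpliciality I would obtain by checking the pushout-product axiom on generators, using that $E\Inj(A,\omega)\times_\phi G\times K$ remains of the required form under tensoring with a simplicial set $K$; properness and homotopicity of filtered colimits are then inherited from the ambient $G$-global model structure of Proposition~\ref{prop:g-gl-model} via the compatibility of tame colimits with ambient colimits noted above.

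I expect the main obstacle to be the matching between the positive generators and the global weak equivalences, i.e.\ the co-representability step and the verification that finite non-empty faithful $H$-sets $A$ suffice to detect the graph-fixed-point equivalences indexed by all universal subgroups. This is where the support combinatorics of Section~\ref{sec:M-sets} (especially Lemma~\ref{lem:agreeSupp} and the behaviour of supports under the $\M$-action) meets the universal-subgroup theory of \cite{lenzGglobal}, and it is also the step where the restriction to tame — rather than mild or arbitrary — objects is essential, since only there do simplices possess genuine finite supports making the fixed-point functors co-representable. A secondary technical point is establishing local presentability of $\cat{$\bm{E\mathcal M}$-$\bm G$-SSet}^\tau$ without the cocontinuity of $(-)^\tau$ that drove Theorem~\ref{thm:mild-presentable}.
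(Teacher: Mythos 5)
The paper does not actually prove this statement: both Proposition~\ref{prop:tame-model} and Theorem~\ref{thm:tame-vs-all} are disposed of by the single citation to \cite[Theorem~1.4.60]{lenzGglobal}, so there is no in-paper argument to compare against. Your outline is, in substance, the strategy of that reference and of the paper's own fully written-out mild analogue (Proposition~\ref{prop:*-mod-quillen}): verify the Smith/Lurie recognition criterion, with the only non-formal input being that $I^+$-injective maps are $G$-global weak equivalences, reduced via adjunction to a statement about the corepresented functors $\maps^{E\mathcal M}(E\Inj(A,\omega),-)^\phi$. Your corepresentability claim is correct (and holds for arbitrary targets, since $\Inj(A,\omega)$ is the quotient of $\mathcal M$ by the relation of agreeing on the finite set $A$), and left properness, simpliciality, and homotopical filtered colimits do come out essentially as you say.

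Two caveats. First, your assertion that the $I^+$-injectives are \emph{precisely} the maps that are acyclic fibrations on ``all these fixed points'' is only tautologically true for the corepresented functors; for the graph-fixed-point functors $(-)^\psi$ themselves it is false (that would characterize the acyclic fibrations of the non-positive structure). What is actually needed, and what constitutes essentially all of the technical content, is the one-way implication: if $\maps^{E\mathcal M}(E\Inj(A,\omega),p)^\phi$ is an acyclic Kan fibration for every finite non-empty faithful $(H,A,\phi)$, then $p^\psi$ is a weak equivalence for every universal $H'\subset\mathcal M$ and every $\psi\colon H'\to G$. In the mild case one takes $A=\omega$ and uses that $E\mathcal M^\mu\hookrightarrow E\mathcal M$ is an equivariant homotopy equivalence; here $A$ must be finite, so one has to express $X^\psi$ for tame $X$ (up to weak equivalence) as a filtered colimit of the corepresented functors over finite faithful $H'$-subsets of $\omega$. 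You correctly identify this as the main obstacle but do not carry it out, so as written the proposal is a plan rather than a proof of the decisive step. Second, local presentability of $\cat{$\bm{E\mathcal M}$-$\bm G$-SSet}^\tau$ also requires an actual argument (for instance exhibiting the objects $E\Inj(A,\omega)\times_\phi G\times\Delta^n$ as a set of finitely presentable generators), since, as you note, the coreflection trick of Theorem~\ref{thm:mild-presentable} is unavailable. Both points are settled in \cite{lenzGglobal}, which is presumably why the paper contents itself with the citation.
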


	\begin{thm}\label{thm:tame-vs-all}
		The adjunction $\incl\hskip-1pt\colon\hskip0pt minus 3pt\cat{$\bm{E\hskip0pt minus 1pt\mathcal M}$-$\bm G$-SSet}^\tau\hskip0pt minus 3pt\rightleftarrows\hskip0pt minus 3pt\cat{$\bm{E\hskip0pt minus 1pt\mathcal M}$-$\bm G$-SSet}_\textup{inj.~$\mskip-.5\thinmuskip G$-gl}\hskip0pt minus 3pt:\!(-)^\tau$ is a Quillen equivalence.
	\end{thm}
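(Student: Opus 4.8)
The plan is to verify that $(\incl, (-)^\tau)$ is a Quillen adjunction and then to reduce the Quillen equivalence to a single weak-equivalence statement about fibrant objects.

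\textbf{Quillen adjunction.} First I would check that $\incl$ is left Quillen. Since $\incl$ preserves all colimits (the tame objects are closed under colimits in $\EMSSet$ by Lemma~\ref{lemma:tau-mu}) and sends each generating cofibration $E\Inj(A,\omega)\times_\phi G\times(\del\Delta^n\hookrightarrow\Delta^n)$ of the positive model structure to a monomorphism, it carries all positive cofibrations to injective cofibrations. Moreover, the weak equivalences of both model structures are the $G$-global weak equivalences, which are detected on the same fixed-point simplicial sets; hence $\incl$ preserves (and reflects) weak equivalences, and as a left adjoint preserving cofibrations and weak equivalences it automatically preserves acyclic cofibrations. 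This is in fact the reason the statement is phrased in terms of the \emph{injective} structure: the generating cofibrations of the positive tame structure need not be cofibrations in the (non-injective) $G$-global model structure of Proposition~\ref{prop:g-gl-model}.

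\textbf{Reduction.} For tame $X$ the unit $X\to(\incl X)^\tau=X^\tau=X$ is an isomorphism, so $\incl$ is fully faithful and, being homotopical, coincides with its own left derived functor. A standard computation of the derived (co)unit then shows that the adjunction is a Quillen equivalence \emph{if and only if} the counit $Y^\tau\hookrightarrow Y$ is a $G$-global weak equivalence for every injectively fibrant $Y$: for fibrant $Y$ the derived counit is the composite $\incl\,Q(Y^\tau)\to Y^\tau\hookrightarrow Y$ of a cofibrant replacement (a weak equivalence preserved by $\incl$) followed by the counit, and the derived unit $X\to(\hat X)^\tau$ at a cofibrant $X$ with fibrant replacement $X\to\hat X$ then follows by two-out-of-three.

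\textbf{Main step.} It remains to prove that $Y^\tau\hookrightarrow Y$ is a $G$-global weak equivalence for injectively fibrant $Y$, which I expect to be the main obstacle. Fibrancy is essential here: $(E\M)^\tau=\emptyset$ (Examples~\ref{ex:mildM} and~\ref{ex:EX-supp}) whereas $E\M$ is $G$-globally weakly contractible, so the statement is false for general $Y$. Unwinding the definition, one must show that the inclusion of fixed points $(Y^\tau)^\phi\hookrightarrow Y^\phi$ is a weak homotopy equivalence for every universal $H\subset\M$ and every $\phi\colon H\to G$. Crucially this cannot be achieved by an algebraic reduction of support—an $H$-equivariant self-embedding of $\omega$ cannot turn an infinite support into a finite one—so the deformation of a $\phi$-fixed simplex into the finitely supported part must instead be produced by solving lifting problems against the fibrant object $Y$. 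The strategy I would follow is to exhibit, using the completeness of the $H$-universe $\omega$ together with the support combinatorics of Section~\ref{sec:M-sets} (Propositions~\ref{prop:capSupp} and~\ref{prop:f(k)supp}, and the injectivity of Lemma~\ref{lem:injAct}), suitable $G$-global acyclic cofibrations along which any $\phi$-fixed simplex, and more generally any relative filling problem, can be pushed into $(Y^\tau)^\phi$; fibrancy of $Y$ then supplies the required fillers. The technical heart is checking that fibrancy provides exactly these fillers—precisely what fails for $E\M$—and this runs parallel to the comparison of tame versus general actions carried out in \cite{lenzGglobal}.
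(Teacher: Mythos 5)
Your formal reduction is correct and is the standard one: $\incl$ is left Quillen for the injective $G$-global structure because it preserves colimits and monomorphisms and both sides have the same weak equivalences; since the unit is the identity on tame objects and $\incl$ is homotopical, the adjunction is a Quillen equivalence precisely when the counit $Y^\tau\hookrightarrow Y$ is a $G$-global weak equivalence for every injectively fibrant $Y$. Your observation that fibrancy is indispensable, witnessed by $(E\mathcal M)^\tau=\emptyset$ against the $G$-global weak contractibility of $E\mathcal M$, is also correct and worth making explicit.

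The problem is that the proposal stops exactly where the theorem starts. The assertion that for fibrant $Y$ one can ``exhibit suitable $G$-global acyclic cofibrations along which any $\phi$-fixed simplex, and more generally any relative filling problem, can be pushed into $(Y^\tau)^\phi$'' is a statement of intent, not an argument: no such acyclic cofibrations are constructed, and nothing is said about why the fillers supplied by fibrancy would land in, or deformation retract onto, the finitely supported part. This step carries essentially all of the mathematical content of the statement --- it is precisely what distinguishes fibrant objects from $E\mathcal M$ in your own counterexample --- and it is not a routine verification; note that the paper itself does not prove it internally but defers the entire theorem (together with the existence of the positive model structure on the tame side) to \cite[Theorem~1.4.60]{lenzGglobal}. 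As written, your text is a correct reduction of the theorem to the key comparison $(Y^\tau)^\phi\hookrightarrow Y^\phi$ for fibrant $Y$, together with an unsubstantiated plan for that comparison, so it cannot be accepted as a proof. (A minor side point: your parenthetical explanation that the injective structure is needed because $E\Inj(A,\omega)\times_\phi G$ fails to be cofibrant in the model structure of Proposition~\ref{prop:g-gl-model} is not justified anywhere and is not needed for the Quillen adjunction; the injective structure already makes that part immediate.)
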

	\begin{proof}[Proof of Proposition~\ref{prop:tame-model} and Theorem~\ref{thm:tame-vs-all}]
		See \cite[Theorem~1.4.60]{lenzGglobal}.
	\end{proof}

	\begin{rem}
		One can drop the assumption that the set $A$ in $(\ref{eq:gen-cof-tame})$ be non-empty, leading to a \emph{$G$-global model structure} \cite[Remark~1.4.61]{lenzGglobal}. The above model structure will however be more useful for the study of commutative monoids later.
	\end{rem}

	\subsection{The model structure on simplicial \texorpdfstring{$\bm*$}{*}-modules}
	Throughout, let $G$ be a finite group. In this subsection, we will define similar model structures on $\cat{$\bm{E\mathcal M}$-$\bm G$-SSet}^\mu$ and show that they are equivalent to the above models.

 	\begin{prop}\label{prop:*-mod-quillen}
		The category $\cat{$\bm{E\mathcal M}$-$\bm G$-SSet}^\mu$ admits a combinatorial model structure with weak equivalences the $G$-global weak equivalences and generating cofibrations given by the maps
		\begin{equation}\label{eq:gen-cof-pos-mild}
			E\Inj(A,\omega)^\mu\times_\phi G\times(\del\Delta^n\hookrightarrow\Delta^n)
		\end{equation}
		for all $n\ge0$, finite groups $H$, non-empty (finitely or infinitely) countable faithful $H$-sets $A$, and homomorphisms $\phi\colon H\to G$. 	We call this the \emph{positive $G$-global model structure}. It is simplicial, proper, and filtered colimits in it are homotopical.
	\end{prop}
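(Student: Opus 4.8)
The plan is to obtain the model structure from the recognition theorem of Jeff Smith for combinatorial model categories, following the template of the tame case \cite[Theorem~1.4.60]{lenzGglobal}; the whole point is that the combinatorial input needed there has by now been reproven for co-infinite supports (notably Lemma~\ref{lem:ksupp} and Propositions~\ref{prop:f(k)supp} and~\ref{prop:equalInM/M_A}). As input I would take the locally presentable category $\EMSSetm$ (Theorem~\ref{thm:mild-presentable}), the set $I$ of proposed generating cofibrations in $(\ref{eq:gen-cof-pos-mild})$, and the class $W$ of $G$-global weak equivalences. Concretely this means verifying: (a) $W$ satisfies two-out-of-three and is closed under retracts; (b) $W$ is an accessible, accessibly embedded subcategory of the arrow category; (c) every map with the right lifting property against $I$ lies in $W$; and (d) the intersection $W\cap\operatorname{cof}(I)$ is closed under pushout and transfinite composition.

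Conditions (a) and (b) are the soft part. Two-out-of-three and retract-closure are immediate from the definition of $W$ through the fixed-point functors $(-)^\phi$ and weak homotopy equivalences of simplicial sets. For accessibility I would use that, by Lemma~\ref{lemma:tau-mu}-(\ref{item:tm-mildFunctor}), $\EMSSetm$ is closed under all colimits in $\EMSSet$, so that filtered colimits in $\EMSSetm$ are computed underlying; since the fixed points are finite limits and hence commute with filtered colimits, and since the $G$-global weak equivalences are stable under filtered colimits by Proposition~\ref{prop:g-gl-model}, the class $W$ is closed under filtered colimits and is therefore accessible. This simultaneously proves the last assertion of the proposition, namely that filtered colimits in the positive $G$-global model structure are homotopical.

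The real content is in (c) and (d). For (c) I would pass to adjoints: lifting a map $p$ against all of $E\Inj(A,\omega)^\mu\times_\phi G\times(\del\Delta^n\hookrightarrow\Delta^n)$ is equivalent to the induced map of mapping complexes $\underline\Hom(E\Inj(A,\omega)^\mu\times_\phi G,-)$ being a trivial Kan fibration, so everything reduces to identifying these complexes. The key lemma to prove is that, as $A$ ranges over the non-empty countable faithful $H$-sets and in particular over a complete $H$-set universe, the object $E\Inj(A,\omega)^\mu\times_\phi G$ corepresents a homotopically correct model of the $\phi$-fixed points on mild $E\mathcal M$-$G$-simplicial sets. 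This is exactly where the passage to $(-)^\mu$ is essential: for infinite $A$ the object $E\Inj(A,\omega)^\mu$ genuinely differs from $E\Inj(A,\omega)$ (cf.\ Example~\ref{ex:mildM}), and one must check that restricting to injections of co-infinite image is precisely what matches the co-infinite supports available in a mild target, the relevant bookkeeping being supplied by Lemma~\ref{lem:ksupp} and Proposition~\ref{prop:equalInM/M_A}. Granting this identification, RLP against $I$ forces each $p^\phi$ to be a trivial fibration, whence $p\in W$.

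I expect the acyclicity condition (d) to be the main obstacle, as is usual for Smith's theorem. Here I would argue exactly as in the tame case, exhibiting a set $J$ of trivial cofibrations (the horn-type maps $E\Inj(A,\omega)^\mu\times_\phi G\times(\Lambda^n_i\hookrightarrow\Delta^n)$ together with the maps comparing different choices of $A$) with $\operatorname{cof}(J)=W\cap\operatorname{cof}(I)$, the verification again resting on the fixed-point identification of (c) and on the filtered-colimit homotopicity from (b). It remains to record the structural claims. The simplicial structure is the cartesian tensoring $K\otimes X=K\times X$, which preserves mildness because $E\mathcal M$ acts trivially on $K$ so that supports are inherited from $X$; the pushout-product axiom then follows on generators, since these already carry the factor $\del\Delta^n\hookrightarrow\Delta^n$ and the pushout-product of simplicial cofibrations is again a cofibration. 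Finally, left properness follows since the cofibrations are underlying monomorphisms, colimits in $\EMSSetm$ agree with those in the (left proper) ambient category $\EMSSet$ by Lemma~\ref{lemma:tau-mu}-(\ref{item:tm-mildFunctor}) and Proposition~\ref{prop:g-gl-model}, and $W$ is detected underlying; right properness is obtained by the analogue of the corresponding tame argument in \cite{lenzGglobal}.
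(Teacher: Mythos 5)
Your overall strategy (recognition theorem for combinatorial model categories, local presentability from Theorem~\ref{thm:mild-presentable}, accessibility of $W$ from stability under filtered colimits, corepresentability of fixed points for the lifting condition, left properness from the cofibrations being underlying monomorphisms) matches the paper's, but the two steps that carry the actual content are asserted rather than proved, and for one of them the route you propose is substantially harder than necessary. For condition (c), the entire point is the claim that $E\Inj(A,\omega)^\mu\times_\phi G$ ``corepresents a homotopically correct model of the $\phi$-fixed points.'' You correctly flag that this is where $(-)^\mu$ matters, but you never supply the mechanism. The paper's argument is concrete: for $A=\omega$ a complete $H$-set universe one shows that the inclusion $E\mathcal M^\mu\hookrightarrow E\mathcal M$ admits a left-$E\mathcal M$-equivariant, right-$H$-equivariant homotopy inverse, given by right multiplication with an $H$-equivariant injection $u\colon\omega\to\omega$ whose image has infinite complement (obtained from an $H$-equivariant embedding $\omega\amalg\omega\to\omega$), with the homotopy given by acting with $(1,u)$. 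This identifies $\maps^{E\mathcal M}(E\mathcal M^\mu,X)^\phi$ with $X^\phi$ up to weak equivalence and is what forces $p^\phi$ to be an acyclic Kan fibration. Without this (or an equivalent) construction, your step (c) is a restatement of what needs to be shown.

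For the acyclicity condition (d), your plan --- exhibit an explicit set $J$ of horn-type maps plus ``maps comparing different choices of $A$'' with $\operatorname{cof}(J)=W\cap\operatorname{cof}(I)$ --- is exactly the hard way, and it is not clear the proposed $J$ would generate all trivial cofibrations; you would essentially need to determine the fibrations in advance. The paper avoids this entirely by using the variant of Smith's theorem in \cite[Proposition~A.2.6.13]{lurie} (Proposition~\ref{prop:lurie-criterion}), where condition (d) is replaced by: $W$ is perfect, and weak equivalences are stable under pushout along pushouts of maps in $I$. The first holds because perfectness pulls back along the filtered-colimit-preserving inclusion into $\cat{$\bm{E\mathcal M}$-$\bm G$-SSet}$ (Remark~\ref{rk:all-about-perfect}); the second is immediate because every pushout of a generating cofibration is an injective cofibration and the injective $G$-global model structure is left proper. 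You should switch to this formulation; as written, your step (d) is a genuine gap. Finally, for right properness ``the analogue of the tame argument'' needs the specific observation that $R_\phi=\maps^{E\mathcal M}(E\mathcal M^\mu,-)^\phi$ both detects $G$-global weak equivalences (via the identification above) and sends fibrations of the new model structure to Kan fibrations (since its left adjoint preserves generating cofibrations and is homotopical); merely citing the tame case does not address the new generators.
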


	Our construction of this model structure will rely on the following general criterion from \cite[Proposition~A.2.6.13]{lurie}:

	\begin{prop}\label{prop:lurie-criterion}
		Let $\mathscr C$ be a locally presentable category, let $I$ be a \emph{set} of maps in $\mathscr C$, and let $\mathscr W\subset\mathscr C$ be a subcategory. Then there exists a left proper combinatorial model structure on $\mathscr C$ with weak equivalences $\mathscr W$ and generating cofibrations given by $I$ provided that the following conditions are satisfied:
		\begin{enumerate}
			\item The subcategory $\mathscr W\subset\mathscr C$ is \emph{perfect} \cite[Definition~A.2.6.10]{lurie}.
			\item If $j$ is a pushout of a map in $I$ along an arbitrary map in $\mathscr C$, then weak equivalences are stable under pushouts along $j$.
			\item If $p$ is a map in $\mathscr C$ with the right lifting property against all maps in $I$, then $p\in\mathscr W$\kern-2.5pt.\qed
		\end{enumerate}
	\end{prop}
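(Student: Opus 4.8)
The plan is to verify the hypotheses of Jeff Smith's recognition theorem by hand rather than quoting a black box, since the entire content is the passage from the accessibility packaged into condition~(1) to an \emph{explicit set} of generating trivial cofibrations. Write $\mathrm{cof}(S)$ for the weakly saturated class generated by a class $S$ (closure under pushout, transfinite composition, and retract) and $\mathrm{inj}(S)$ for the maps with the right lifting property against $S$. I would \emph{define} the cofibrations to be $\mathrm{cof}(I)$, the weak equivalences to be $\mathscr W$, and the fibrations to be $\mathrm{inj}(\mathrm{cof}(I)\cap\mathscr W)$. Completeness and cocompleteness are free from local presentability, two-out-of-three for $\mathscr W$ is part of condition~(1), and all three classes are visibly closed under retracts; so everything reduces to the two factorization axioms, after which the lifting axioms and the identification of the acyclic (co)fibrations follow by the standard retract argument.

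The $(\text{cofibration},\text{acyclic fibration})$-factorization is the easy one. Choosing a regular cardinal $\kappa$ with $\mathscr C$ locally $\kappa$-presentable and the domains and codomains of $I$ being $\kappa$-presentable, the small object argument applied to $I$ factors any map as $f=p\circ i$ with $i\in\mathrm{cof}(I)$ and $p\in\mathrm{inj}(I)$. Condition~(3) gives $p\in\mathscr W$, and since $\mathrm{inj}(I)=\mathrm{inj}(\mathrm{cof}(I))\subseteq\mathrm{inj}(\mathrm{cof}(I)\cap\mathscr W)$ the map $p$ is a fibration, hence acyclic. The same inclusion, once the retract argument is available, will identify the acyclic fibrations with $\mathrm{inj}(I)$.

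The real work is the $(\text{acyclic cofibration},\text{fibration})$-factorization, which demands a \emph{set} $J$ with $\mathrm{cof}(J)=\mathrm{cof}(I)\cap\mathscr W$; this is exactly where perfectness earns its keep. Using that $\mathscr W$ is an accessible, accessibly embedded subcategory of $\mathrm{Fun}(\Delta^1,\mathscr C)$ closed under filtered colimits, I would enlarge $\kappa$ so that $\mathscr W$ is moreover closed under $\kappa$-filtered colimits and generated under them by its $\kappa$-presentable members, and then take $J$ to be a set of representatives for the isomorphism classes of those morphisms in $\mathrm{cof}(I)\cap\mathscr W$ between $\kappa$-presentable objects. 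The inclusion $\mathrm{cof}(J)\subseteq\mathrm{cof}(I)\cap\mathscr W$ is formal: $J\subseteq\mathrm{cof}(I)$ by construction, and condition~(2) makes $\mathscr W$ stable under the pushouts of $I$-cells occurring in any $J$-cell complex, while two-out-of-three together with closure under filtered colimits upgrades this to stability under transfinite composition and retracts, so $\mathrm{cof}(J)\subseteq\mathscr W$.

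The hard direction $\mathrm{cof}(I)\cap\mathscr W\subseteq\mathrm{cof}(J)$ is the main obstacle. Given $f\in\mathrm{cof}(I)\cap\mathscr W$, the small object argument for $J$ factors $f=q\circ j$ with $j\in\mathrm{cof}(J)\subseteq\mathscr W$, whence $q\in\mathscr W$ by two-out-of-three. It then suffices to prove the key lemma $\mathrm{inj}(J)\cap\mathscr W\subseteq\mathrm{inj}(I)$, for then $q$ is an acyclic fibration, the cofibration $f$ lifts against it, and the retract argument places $f$ in $\mathrm{cof}(J)$. That lemma is where essentially all the subtlety lives: confronted with a lifting problem for a map in $I$ against such a $q$, one filters $q$ by its $\kappa$-presentable approximations and invokes the $\kappa$-accessibility of $\mathscr W$ to locate, inside $q$, a $\kappa$-presentable acyclic cofibration — a member of $J$ up to isomorphism — against which the lift exists, transporting the solution back. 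Granting the lemma, one gets $\mathrm{cof}(J)=\mathrm{cof}(I)\cap\mathscr W$, so $\mathrm{inj}(J)$ is precisely the class of fibrations and the small object argument for $J$ supplies the second factorization; the retract argument then identifies acyclic fibrations with $\mathrm{inj}(I)$ and acyclic cofibrations with $\mathrm{cof}(J)$, and left properness drops out by extending condition~(2) along transfinite compositions exactly as in the proof that $\mathrm{cof}(J)\subseteq\mathscr W$. The model structure is combinatorial because $\mathscr C$ is presentable and $I$, $J$ are sets.
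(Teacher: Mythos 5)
The paper gives no proof of this proposition at all --- it is quoted verbatim from Lurie (HTT, Proposition A.2.6.13) with a \qed --- so what you are really attempting is a reconstruction of Smith's recognition theorem. Your architecture is the standard one (and, up to packaging, Lurie's own): cofibrations $\mathrm{cof}(I)$, the set $J$ of $\kappa$-presentable members of $\mathrm{cof}(I)\cap\mathscr W$, the key lemma $\mathrm{inj}(J)\cap\mathscr W\subseteq\mathrm{inj}(I)$, and the retract argument to get $\mathrm{cof}(I)\cap\mathscr W\subseteq\mathrm{cof}(J)$. The easy factorization, the identification of the acyclic (co)fibrations, and the deduction of left properness are all handled correctly. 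However, there is a genuine gap at the step $\mathrm{cof}(J)\subseteq\mathscr W$.

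You write that ``condition (2) makes $\mathscr W$ stable under the pushouts of $I$-cells occurring in any $J$-cell complex.'' This conflates two different statements. Condition (2) says that pushouts \emph{of weak equivalences along} cobase changes of maps in $I$ stay in $\mathscr W$ --- a left-properness-type condition, in which the cofibration is the map one pushes \emph{along}. What $\mathrm{cell}(J)\subseteq\mathscr W$ requires is the transposed statement: the cobase change \emph{of} a map $j\in\mathrm{cof}(I)\cap\mathscr W$ \emph{along an arbitrary map} $g$ lies in $\mathscr W$. This is not formal --- it is exactly the point where Lurie's hypotheses are weaker than the version of Smith's theorem that simply assumes $\mathrm{cof}(I)\cap\mathscr W$ closed under pushout --- and your sketch never supplies the bridge; tellingly, it also never uses condition (3) here, which the bridge needs. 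The repair: first upgrade (2), using closure of $\mathscr W$ under filtered colimits and retracts (both consequences of perfectness; retracts via splitting idempotents as sequential colimits), to the statement that pushouts of $\mathscr W$-maps along arbitrary maps of $\mathrm{cof}(I)$ lie in $\mathscr W$. Then, given $j\colon A\to B$ in $\mathrm{cof}(I)\cap\mathscr W$ and $g\colon A\to A'$, factor $g=p\circ i$ with $i\in\mathrm{cell}(I)$ and $p\in\mathrm{inj}(I)$ by the small object argument. The cobase change $j_1$ of $j$ along the cofibration $i$ lies in $\mathscr W$ by the upgraded (2); for the cobase change $j'$ of $j_1$ along $p$, note $p\in\mathscr W$ by condition (3), its cobase change $p'$ along the cofibration $j_1$ lies in $\mathscr W$ by the upgraded (2) again, and $j'\circ p=p'\circ j_1$ together with two-out-of-three gives $j'\in\mathscr W$. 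The same missing ingredient --- factoring into an $I$-cell complex followed by an $\mathrm{inj}(I)$-map and invoking (3) plus two-out-of-three --- is also what your key-lemma sketch glosses over: to ``locate a member of $J$ inside $q$'' one must factor the $\kappa$-presentable approximation $q_\alpha$ as $r_\alpha\circ j_\alpha$ with $j_\alpha$ a $\kappa$-presentable $I$-cell complex and $r_\alpha\in\mathrm{inj}(I)$, conclude $j_\alpha\in\mathscr W$ (hence $j_\alpha\in J$) from (3) and two-out-of-three, and then solve the original lifting problem by lifting $i$ against $r_\alpha$ and lifting $j_\alpha$ against $q\in\mathrm{inj}(J)$. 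With these two points made explicit, your proof goes through.
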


	\begin{rem}\label{rk:all-about-perfect}
		Below, we will treat the notion of a perfect class of morphisms as a blackbox. The only facts we will need are the following:
		\begin{enumerate}
			\item If $\mathscr C$ is a left proper combinatorial model category in which filtered colimits are homotopical, then conversely the subcategory $\mathscr W$ of weak equivalences is perfect \cite[Remark~A.2.6.14]{lurie}.
			\item If $F\colon\mathscr C\to\mathscr D$ is a functor of locally presentable categories preserving filtered colimits and $\mathscr W\subset\mathscr D$ is perfect, then so is $F^{-1}(\mathscr W)\subset\mathscr C$ \cite[Corollary~A.2.6.12]{lurie}.
		\end{enumerate}
	\end{rem}

	We will further need the following homotopical information on the box product:

	\begin{prop}\label{prop:operadic-prod-homotopical}
		Let $X_1,\dots,X_n$ be any $E\mathcal M$-$G$-simplicial sets. Then the natural map $\Phi\colon E\Inj(n\times\omega,\omega)\times_{E\mathcal M^n}X_1\times\cdots\times X_n\to X_1\times\cdots\times X_n$ from Construction~\ref{const:operadBox} is a $G$-global weak equivalence.

		Moreover, if $X_1=X_2=\cdots=X_n$ and we equip both sides with the $\Sigma_n$-action via permuting the factors and acting on $n$ in the tautological way, then $\Phi$ is even a $(G\times\Sigma_n)$-global weak equivalence.
		\begin{proof}
			See \cite[Proposition~5.17]{lenzOperads}.
		\end{proof}
	\end{prop}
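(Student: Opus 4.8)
The plan is to reduce to the defining fixed-point condition for $G$-global weak equivalences and then to exploit that $E\Inj(n\times\omega,\omega)$ is a \emph{universal space} whose relevant fixed points are contractible, while its right $E\mathcal M^n$-action is free. First I would record that $\Phi$ factors as
\[
E\Inj(n\times\omega,\omega)\times_{E\mathcal M^n}\textstyle\prod_i X_i \xrightarrow{\ \rho_*\ } E\mathcal M^n\times_{E\mathcal M^n}\textstyle\prod_i X_i\cong\textstyle\prod_i X_i,
\]
where $\rho\colon E\Inj(n\times\omega,\omega)\to E\mathcal M^n$ is the restriction map $f\mapsto(f\iota_1,\dots,f\iota_n)$ from Construction~\ref{const:operadBox}; it is left $E\mathcal M$-equivariant and right $E\mathcal M^n$-equivariant, and the $G$-action on the balanced product comes only from the (commuting) $G$-actions on the $X_i$. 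Thus it suffices to show that balancing $\rho$ with $\prod_i X_i$ over $E\mathcal M^n$ yields a $G$-global weak equivalence; the case $n=1$ is trivial, as then $\rho$ is (up to the canonical identification $\omega\cong 1\times\omega$) the identity of $E\mathcal M$. I would then observe that the right $E\mathcal M^n$-action on both $E\Inj(n\times\omega,\omega)$ and $E\mathcal M^n$ is degreewise free: $f\iota_j u_j=f\iota_j$ forces $u_j=\id$ since $f\iota_j$ is injective, hence left-cancellable. This freeness is what makes the strict balanced product agree with the homotopy-invariant two-sided bar construction $B(-,E\mathcal M^n,\prod_i X_i)$.

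The heart of the argument is to show that $\rho$ is an equivalence on all graph-subgroup fixed points. For this I would use two standard properties of the $E$-construction: that it commutes with fixed points, $E(S)^L=E(S^L)$, and that $E(S)$ is a contractible Kan complex whenever $S\neq\emptyset$. Fixing a universal $H\subset\mathcal M$ and a homomorphism $\phi\colon H\to G$, the freeness reduction lets one express the relevant fixed points through fixed-point sets of $\Inj(n\times\omega,\omega)$ and of $\mathcal M^n$ under the appropriate (pre/post-)composition actions of $H$; these are precisely the sets of $H$-equivariant injections of the countable $H$-sets $n\times\omega$ (respectively $\omega$) into $\omega$. Since $H$ is universal, $\omega$ is a complete $H$-set universe, so all of these equivariant-injection sets are nonempty, and hence the associated $E$-constructions are contractible. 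Therefore both $E\Inj(n\times\omega,\omega)$ and $E\mathcal M^n$ have contractible graph-subgroup fixed points and $\rho$ restricts to a map of contractible Kan complexes on each.

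Combining these, $\rho_*$ should induce a $G$-global weak equivalence on balanced products: on $\Gamma_{H,\phi}$-fixed points, the freeness of Step~2 permits a cocycle/bundle decomposition of $\bigl(E\Inj(n\times\omega,\omega)\times_{E\mathcal M^n}\prod_i X_i\bigr)^{\phi}$ over the pieces of $\prod_i X_i^{\phi}$ with fibers the contractible fixed-point sets above, and identically for $E\mathcal M^n$ (recovering $\prod_i X_i^{\phi}$), so that $\rho_*^{\phi}$ is a weak equivalence; equivalently, the bar-construction description turns the fiberwise contractibility into a levelwise weak equivalence of bar constructions. For the second assertion, $\Sigma_n$ acts on $E\Inj(n\times\omega,\omega)$ by permuting the $n$ copies of $\omega$ in the source, compatibly with $\rho$ (permuting the factors of $E\mathcal M^n$), with the diagonal $\Sigma_n$-action on $\prod_i X_i$ when $X_1=\cdots=X_n$, and with the $E\mathcal M$- and $G$-actions. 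I would rerun the above with graph subgroups of $\mathcal M\times G\times\Sigma_n$; the only new input is that the relevant fixed-point sets of $\Inj(n\times\omega,\omega)$ remain nonempty, which again holds because they are equivariant-injection sets of countable sets into $\omega$, on which the ambient group acts only through its $\mathcal M$-component.

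The main obstacle will be the assembling step, namely the interaction of fixed points with the \emph{monoid}-balanced product: because $E\mathcal M^n$ is only a monoid, one cannot take fixed points of numerator and denominator separately, and the freeness of the action must be leveraged carefully—via the cocycle/bundle decomposition, or by exhibiting $E\Inj(n\times\omega,\omega)$ as cofibrant in a suitable $(E\mathcal M^n\times G)$-global model structure so that $(-)\times_{E\mathcal M^n}\prod_i X_i$ is left Quillen—to convert the fiberwise contractibility into an honest weak equivalence on the base. Closely related, the $\Sigma_n$-strengthening requires verifying that the graph subgroups of $\mathcal M\times G\times\Sigma_n$ which occur still have universal image in $\mathcal M$, so that completeness of the universe $\omega$ can legitimately be invoked.
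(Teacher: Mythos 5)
Your preliminary reductions are sound: $\Phi$ does factor as $\rho_*$ followed by the canonical isomorphism $E\mathcal M^n\times_{E\mathcal M^n}\prod_iX_i\cong\prod_iX_i$, the right $E\mathcal M^n$-action on $E\Inj(n\times\omega,\omega)$ is free, $E(S)$ is a contractible Kan complex whenever $S\neq\emptyset$, and universality of $H$ is what makes the relevant injection sets non-empty. But there is a genuine gap at exactly the point you defer as ``the main obstacle'': nothing in the proposal controls the $\Gamma_{H,\phi}$-fixed points of the monoid-balanced product, and none of your suggested bridges closes this. The bar-construction bridge fails for a structural reason, not a technical one: writing $P=E\Inj(n\times\omega,\omega)$ and $M=E\mathcal M^n$, even if freeness gave $B(P,M,\prod_iX_i)\simeq P\times_M\prod_iX_i$ non-equivariantly, the same comparison would be needed for the augmentation $B(M,M,\prod_iX_i)\to\prod_iX_i$, and that augmentation is \emph{not} a $\Gamma_{H,\phi}$-equivariant homotopy equivalence via the usual extra degeneracy: inserting the unit of $M$ does not commute with the graph-subgroup action, precisely because $\Gamma_{H,\phi}$ mixes the $G$-action with the left $E\mathcal M$-action that the balancing absorbs. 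Concretely, the $k$-th level of the bar construction has fixed points $P^H\times M^k\times\bigl(\prod_iX_i\bigr)^{\phi(H)}$ (fixed points of each factor separately, the $\mathcal M$-action on the $X_i$ playing no role), whereas the target of $\Phi$ has the diagonal fixed points $\bigl(\prod_iX_i\bigr)^{\Gamma_{H,\phi}}$; relating these two is the entire content of the proposition, not an assembling formality. The cofibrancy/left Quillen variant merely repackages the same unproven flatness statement. Relatedly, your identification of the fixed points is off: $\Gamma_{H,\phi}$ acts on the factor $E\Inj(n\times\omega,\omega)$ by postcomposition only, so its fixed points are $E(\Inj(n\times\omega,\omega^H))$, \emph{not} the set of $H$-equivariant injections; equivariant injections appear only after the cocycle analysis you allude to but never carry out.

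For comparison: the paper gives no argument of its own but cites \cite[Proposition~5.17]{lenzOperads}, and the argument there (like all analogous statements in this series of papers) sidesteps fixed points of quotients entirely. One fixes a universal $H\subset\mathcal M$ and $\phi\colon H\to G$, uses completeness of the universe $\omega$ to choose an $H$-\emph{equivariant} injection $\psi\colon n\times\omega\to\omega$, and checks that $s(x_1,\dots,x_n)=[\psi,\dots,\psi;x_1,\dots,x_n]$ defines a $\Gamma_{H,\phi}$-equivariant map in the other direction (equivariance uses $h\psi=\psi(h\amalg\cdots\amalg h)$ together with the balancing relation). Both composites are then $\Gamma_{H,\phi}$-equivariantly homotopic to identities: $\Phi s$ is the action of the $H$-fixed injections $\psi\iota_j\in\mathcal M^H$, which the canonical edges $(\psi\iota_j,1)$ of $E\mathcal M$ contract equivariantly onto the identity, while $s\Phi\simeq\id$ uses that any two maps into an $E$-construction are canonically homotopic and that these homotopies descend to the balanced product. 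Hence $\Phi$ restricts to a homotopy equivalence on all graph-subgroup fixed points, with no need to compute them. This also dissolves your closing worry about graph subgroups of $\mathcal M\times G\times\Sigma_n$: for the $\Sigma_n$-statement one needs $\psi$ to be equivariant for $H$ acting on $n\times\omega$ through $\phi_{\Sigma_n}$ on $n$ and tautologically on $\omega$, which exists because $n\times\omega$ is then a countable $H$-set and $\omega$ is a complete $H$-set universe; no condition on the image of $H$ in $\Sigma_n$ is needed.
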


	Combining the first half with Proposition~\ref{prop:mildFunOperadic} we immediately get:

	\begin{cor}\label{cor:mu-we}
		Let $X$ be any $E\mathcal M$-$G$-simplicial set. Then the inclusion $X^\mu\hookrightarrow X$ is a $G$-global weak equivalence.\qed
	\end{cor}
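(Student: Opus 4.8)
The plan is to realize the inclusion $X^\mu\hookrightarrow X$ as one of the two factors of the comparison map $\Phi$ and then invoke the two-out-of-three property of weak equivalences. Concretely, I would first specialize Proposition~\ref{prop:operadic-prod-homotopical} to the case $n=2$ with $X_1=X$ and $X_2=*$. Since the terminal object $*$ is a perfectly good $E\mathcal M$-$G$-simplicial set, this immediately yields that
\[
\Phi\colon E\Inj(2\times\omega,\omega)\times_{E\mathcal M^2}(X\times *)\to X\times *\cong X
\]
is a $G$-global weak equivalence.

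Next I would bring in Proposition~\ref{prop:mildFunOperadic}, which identifies the source of this very $\Phi$ with $X^\mu$. More precisely, it asserts that $\Phi$ restricts to an isomorphism $E\Inj(2\times\omega,\omega)\times_{E\mathcal M^2}(X\times *)\xrightarrow{\ \cong\ }X^\mu$, and the naturality square appearing in its proof exhibits the original $\Phi$ as exactly the composite of this isomorphism with the inclusion $X^\mu\hookrightarrow X$.

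Finally, I would conclude by two-out-of-three: the composite $\Phi$ is a $G$-global weak equivalence by the first step, and its first factor is an isomorphism, hence trivially a $G$-global weak equivalence, so the remaining factor $X^\mu\hookrightarrow X$ is a $G$-global weak equivalence as well. There is no genuine obstacle here, since all of the real work sits in the two cited propositions; the only point to be careful about is that the map $\Phi$ of Proposition~\ref{prop:operadic-prod-homotopical} and the map $\Phi$ of Proposition~\ref{prop:mildFunOperadic} are literally the same arrow (both being the map $\Phi$ of Construction~\ref{const:operadBox} for $n=2$), which is precisely what licenses the factorization.
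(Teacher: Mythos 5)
Your proposal is correct and is exactly the paper's argument: the corollary is stated immediately after Proposition~\ref{prop:operadic-prod-homotopical} with the remark that it follows by combining that proposition (for $n=2$, $X_1=X$, $X_2=*$) with the identification of the source of $\Phi$ as $X^\mu$ from Proposition~\ref{prop:mildFunOperadic}. The factorization of $\Phi$ through the isomorphism onto $X^\mu$ followed by the inclusion, plus two-out-of-three, is precisely what the paper intends.
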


	\begin{proof}[Proof of Proposition~\ref{prop:*-mod-quillen}]
		We will verify the assumptions of Proposition~\ref{prop:lurie-criterion}.

		By Theorem~\ref{thm:mild-presentable}, $\cat{$\bm{E\mathcal M}$-$\bm G$-SSet}^\mu$ is locally presentable. Moreover, by the first half of Remark~\ref{rk:all-about-perfect} together with Proposition~\ref{prop:g-gl-model}, the $G$-global weak equivalences in $\cat{$\bm{E\mathcal M}$-$\bm G$-SSet}$ are perfect, hence so are the $G$-global weak equivalences in $\cat{$\bm{E\mathcal M}$-$\bm G$-SSet}^\mu$ by the second half of the remark applied to the inclusion.

		Clearly, all the putative generating cofibrations are injective cofibrations, whence so is any pushout $j$ of any of them. Thus, the second assumption follows from left properness of the injective $G$-global model structure on $\cat{$\bm{E\mathcal M}$-$\bm G$-SSet}$.

		Finally, if a map $p$ enjoys the right lifting property against all maps of the form $E\mathcal M^\mu\times_\phi G\times(\del\Delta^n\hookrightarrow\Delta^n)$ for a fixed universal $H\subset\mathcal M$ and homomorphism $\phi\colon H\to G$, then an easy adjointness argument shows that $\maps^{E\mathcal M}(E\mathcal M^\mu,X)^\phi$ is an acyclic Kan fibration; here $H$ acts via its right action on $\mathcal M^\mu$ and its action on $X$, while $G$ acts on $X$ as before. The inclusion $E\mathcal M^\mu\hookrightarrow E\mathcal M$ has a left-$E\mathcal M$-equivariant-right-$H$-equivariant homotopy inverse as follows: as $H$ is universal, we find an $H$-equivariant injecion $\omega\amalg\omega\to\omega$; restricting to the first summand then gives an $H$-equivariant injection $u\colon\omega\to\omega$ missing infinitely many elements, and right multiplication with $u$ then induces the desired homotopy inverse, with homotopy similarly given by acting with $(1,u)$. Thus, the acyclic Kan fibration $\maps^{E\mathcal M}(E\mathcal M^\mu,X)^\phi$ agrees up to weak equivalence with $p^\phi$; in particular the latter is a weak homotopy equivalence. Letting $\phi$ vary, it follows that $p$ is a $G$-global weak equivalence. This completes the proof that the model structure exists, that it is left proper and combinatorial, and that filtered colimits in it are homotopical.

		Next, we will show that this model structure is simplicial, for which we have to verify the pushout product axiom. The pushout product axiom for (generating) cofibrations is clear. For the acyclicity part, let $i\colon A\to B$ be a cofibration in $\cat{SSet}$ and $f\colon X\to Y$ a cofibration in $\cat{$\bm{E\mathcal M}$-$\bm G$-SSet}^\mu$; we consider the diagram
		\begin{equation*}
			\begin{tikzcd}
				A\times X\arrow[dr,phantom,"\ulcorner"{very near end}]\arrow[d, "i\times X"']\arrow[r, "A\times f"] & A\times Y\arrow[d]\arrow[ddr, bend left=15pt, "i\times Y"]\\
				B\times X\arrow[r]\arrow[drr, bend right=15pt, "B\times f"'] & P\arrow[dr, dashed, "i\ppo f"{description}]\\
				&& B\times Y
			\end{tikzcd}
		\end{equation*}
		defining the pushout product $i\ppo f$. If $f$ is a $G$-global weak equivalence, then so are $A\times f$ and $B\times f$; on the other hand, also the map $B\times X\to P$ is a $G$-global weak equivalence as a pushout of $A\times f$ along the injective cofibration $i\times X$. Thus, $2$-out-of-$3$ shows that $i\ppo f$ is a weak equivalence. The argument for the case where $i$ is a weak homotopy equivalence is analogous.

		Finally, we have to show right properness, for which we consider any pullback
		\begin{equation}\label{diag:pb-mild-EM-G}
			\begin{tikzcd}
				A\arrow[d]\arrow[r, "g"]\arrow[dr,phantom,"\lrcorner"{very near start}] & B\arrow[d, "p"]\\
				C\arrow[r, "f"'] & D
			\end{tikzcd}
		\end{equation}
		in $\cat{$\bm{E\mathcal M}$-$\bm G$-SSet}^\mu$ such that $f$ is a $G$-global weak equivalence and $p$ is a fibration in $\cat{$\bm{E\mathcal M}$-$\bm G$-SSet}$; we have to show that $g$ is a $G$-global weak equivalence again.

		For this we fix a universal $H\subset\mathcal M$ together with a homomorphism $\phi\colon H\to G$, and we write $R_\phi\colon \cat{$\bm{E\mathcal M}$-$\bm G$-SSet}^\mu\to\cat{SSet}$ for the functor $\maps^{E\mathcal M}(E\mathcal M^\mu,-)^\phi$. If $f$ is any map of mild $E\mathcal M$-$G$-simplicial sets, then we have seen above that $R_\phi f$ is a weak homotopy equivalence if and only if $f^\phi$ is so.
		On the other hand, $R_\phi$ is right adjoint to the functor $\cat{SSet}\to\cat{$\bm{E\mathcal M}$-$\bm G$-SSet}^\mu,\allowbreak X\mapsto E\mathcal M^\mu\times_\phi G\times X$, and the latter sends generating cofibrations to generating cofibrations and is homotopical. Thus, $R_\phi$ in particular sends fibrations of the positive $G$-global model structure to Kan fibrations. Thus, applying the right adjoint $R_\phi$ to $(\ref{diag:pb-mild-EM-G})$ expresses $R_\phi g$ as the pullback of the weak homotopy equivalence $R_\phi f$ along the Kan fibration $R_\phi p$, so $R_\phi g$ is a weak homotopy equivalence by right properness of the Kan-Quillen model structure, and accordingly also $g^\phi$ is a weak homotopy equivalence as desired.
	\end{proof}

	\begin{rem}
		The argument above works for a wide class of sets of generating cofibrations $I$: namely, we only need that $I$ consists of injective cofibrations, that all $E\mathcal M^\mu\times_\phi G$ are $I$-cofibrant, and that the pushout product of any map in $I$ with $\del\Delta^n\hookrightarrow\Delta^n$ ($n\ge0$) is an $I$-cofibration. In particular, one can use this to obtain a \emph{projective $G$-global model structure} on $\cat{$\bm{E\mathcal M}$-$\bm G$-SSet}^\mu$ with generating cofibrations given by the maps $E\mathcal M^\mu\times_\phi G\times(\del\Delta^n\hookrightarrow\Delta^n)$ as well as a \emph{$G$-global model structure} where we drop the assumption that the set $A$ in $(\ref{eq:gen-cof-pos-mild})$ be non-empty. Finally, if $I$ is a set of generating cofibrations for the injective model structure on $\cat{$\bm{E\mathcal M}$-$\bm G$-SSet}$, applying this to $I^\mu=\{i^\mu :i\in I\}$ can be shown to yield an \emph{injective $G$-global model structure} with the $G$-global weak equivalences as weak equivalences and the underlying cofibrations as cofibrations. As the positive $G$-global model structure will suffice for our purposes, we leave the details to the interested reader.
	\end{rem}

	\begin{prop}\label{prop:*-mod-equiv}
		The adjunctions
		\begin{align*}
			(-)^\mu\colon\cat{$\bm{\EM}$-$\bm G$-SSet}_\textup{$G$-gl.} &\rightleftarrows \cat{$\bm{\EM}$-$\bm G$-SSet}^\mu_\textup{pos.~$G$-gl} :\!R\\
			\incl\colon\cat{$\bm{\EM}$-$\bm G$-SSet}^\mu_\textup{pos.~$G$-gl}&\rightleftarrows\cat{$\bm{\EM}$-$\bm G$-SSet}_\textup{inj.~$G$-gl} :\!(-)^\mu\\
			\incl\colon \cat{$\bm{\EM}$-$\bm G$-SSet}^\tau_\textup{pos.~$G$-gl} &\rightleftarrows\cat{$\bm{\EM}$-$\bm G$-SSet}^\mu_\textup{pos.~$G$-gl} :\!(-)^\tau
		\end{align*}
		are Quillen equivalences.
	\end{prop}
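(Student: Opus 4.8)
The plan is to deduce all three Quillen equivalences from Corollary~\ref{cor:mu-we} (that $X^\mu\hookrightarrow X$ is always a $G$-global weak equivalence), the previously established equivalences of Theorem~\ref{thm:tame-vs-all} and the identity equivalence $(\ref{eq:id-QE})$, together with formal two-out-of-three arguments. The observation that makes everything run is that all the functors involved are \emph{homotopical}: the inclusions are homotopical since the $G$-global weak equivalences in the subcategories are by definition detected in the ambient category, and $(-)^\mu$ is homotopical by a two-out-of-three argument applied to the naturality square of the inclusions $X^\mu\hookrightarrow X$ for a map $f\colon X\to Y$, using Corollary~\ref{cor:mu-we} on both $X$ and $Y$. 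I would record these facts first, and also note that $(-)^\tau=(-)^\tau\circ(-)^\mu$ (a finitely supported element is co-infinitely supported), so the relevant triangles of right adjoints commute.

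I would treat the second adjunction first, as it is the crux. One checks it is a Quillen adjunction by noting that every generating cofibration of the positive $G$-global structure on $\cat{$\bm{E\mathcal M}$-$\bm G$-SSet}^\mu$ is an underlying monomorphism, hence so is every cofibration; thus the inclusion preserves cofibrations, and being homotopical it preserves acyclic cofibrations. For the Quillen equivalence, since both the inclusion and $(-)^\mu$ are homotopical they induce an adjunction on homotopy categories; its unit is an isomorphism (for mild $X$ one has $X^\mu=X$), and its counit is precisely the inclusion $X^\mu\hookrightarrow X$, a weak equivalence by Corollary~\ref{cor:mu-we}. Hence the derived adjunction is an adjoint equivalence.

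For the first adjunction the only genuine point is that $(-)^\mu$ is left Quillen; the equivalence is then formal. I would compute its effect on a generating cofibration: using that $(-)^\mu$ is cocontinuous (Theorem~\ref{thm:mild-presentable}), ignores the simplicial and $G$-directions, and satisfies $(E\mathcal M)^\mu=E(\mathcal M^\mu)$ (Example~\ref{ex:mildFunctor}), one obtains
\[
\big(E\mathcal M\times_\phi G\times(\del\Delta^n\hookrightarrow\Delta^n)\big)^\mu\cong E(\mathcal M^\mu)\times_\phi G\times(\del\Delta^n\hookrightarrow\Delta^n).
\]
Since $\mathcal M^\mu=\Inj(\omega,\omega)^\mu$ (Example~\ref{ex:mildM}) with $\omega$ a non-empty faithful $H$-set (as $H\subset\mathcal M$), the right-hand side is a generating cofibration of the positive $G$-global structure on the mild category. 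Cocontinuity then forces $(-)^\mu$ to preserve all cofibrations, and homotopicity handles acyclic cofibrations, so $(-)^\mu$ is left Quillen. Being homotopical, its left derived functor is $\Ho((-)^\mu)$, which factors as the equivalence $\Ho(\cat{$\bm{E\mathcal M}$-$\bm G$-SSet}_{G\text{-gl}})\xrightarrow{\ \sim\ }\Ho(\cat{$\bm{E\mathcal M}$-$\bm G$-SSet}_{\text{inj.~}G\text{-gl}})$ from $(\ref{eq:id-QE})$ followed by the equivalence induced by the $(-)^\mu$ of the second adjunction; hence it is an equivalence and the first adjunction is a Quillen equivalence.

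The third adjunction I would then obtain by two-out-of-three. The inclusion $\cat{$\bm{E\mathcal M}$-$\bm G$-SSet}^\tau\hookrightarrow\cat{$\bm{E\mathcal M}$-$\bm G$-SSet}^\mu$ is left Quillen because for finite $A$ one has $E\Inj(A,\omega)=E\Inj(A,\omega)^\mu$, so the generating cofibrations of the tame positive structure are exactly those of the mild positive structure with finite $A$ (homotopicity again handling acyclic cofibrations). Composing this inclusion with that of the second adjunction yields the inclusion into $\cat{$\bm{E\mathcal M}$-$\bm G$-SSet}_{\text{inj.~}G\text{-gl}}$, whose adjunction is a Quillen equivalence by Theorem~\ref{thm:tame-vs-all} (its right adjoint being $(-)^\tau=(-)^\tau\circ(-)^\mu$); since the second factor is a Quillen equivalence, the third one is too. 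The main obstacle throughout is the single non-formal computation in the third paragraph, namely identifying $(E\mathcal M\times_\phi G)^\mu$ with $E(\mathcal M^\mu)\times_\phi G$ and recognizing it as a generating cofibration of the mild positive structure; here cocontinuity of $(-)^\mu$ is essential and one must take care that the various right-action conventions entering the $\times_\phi$-construction match up.
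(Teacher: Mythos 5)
Your proposal is correct and follows essentially the same route as the paper: establish that all the left adjoints are homotopical and send generating cofibrations to cofibrations (hence are left Quillen), and then deduce the equivalences from Corollary~\ref{cor:mu-we}, Theorem~\ref{thm:tame-vs-all}, and two-out-of-three on homotopy categories. The only difference is that you spell out the computation of $(-)^\mu$ on generating cofibrations, which the paper dismisses as ``direct inspection.''
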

	\begin{proof}
		Corollary~\ref{cor:mu-we} implies that $(-)^\mu\colon\cat{$\bm{E\mathcal M}$-$\bm G$-SSet}\to\cat{$\bm{E\mathcal M}$-$\bm G$-SSet}^\mu$ is homotopical and that the induced functor on homotopy categories is quasi-inverse to the functor induced by $\cat{$\bm{E\mathcal M}$-$\bm G$-SSet}^\mu\hookrightarrow \cat{$\bm{E\mathcal M}$-$\bm G$-SSet}$. On the other hand, also $\cat{$\bm{E\mathcal M}$-$\bm G$-SSet}^\tau\hookrightarrow \cat{$\bm{E\mathcal M}$-$\bm G$-SSet}$ induces an equivalence of homotopy categories by Theorem~\ref{thm:tame-vs-all}, whence so does $\cat{$\bm{E\mathcal M}$-$\bm G$-SSet}^\tau\hookrightarrow \cat{$\bm{E\mathcal M}$-$\bm G$-SSet}^\mu$ by $2$-out-of-$3$.

		It therefore only remains to show that the above adjunctions are Quillen adjunctions; however, for each of these the left adjoint is homotopical and sends generating cofibrations to cofibrations by direct inspection.
	\end{proof}

	\begin{prop}
		The positive $G$-global model structure on $\cat{$\bm{E\mathcal M}$-$\bm G$-SSet}^\mu$ admits a Bousfield localization with weak equivalences the $G$-equivariant weak equivalences; we call this the \emph{positive $G$-equivariant model structure}. It is simplicial, combinatorial, proper, and filtered colimits in it are homotopical.

		If $j\colon G\to\mathcal M$ is a universal embedding and $\delta=(j,\id)\colon G\to \mathcal M\times G$, then $$\delta^*\colon\cat{$\bm{E\mathcal M}$-$\bm G$-SSet}^\mu\to\cat{$\bm G$-SSet}$$ is the left half of a Quillen equivalence with respect to this model structure.
		\begin{proof}
			It is clear that $\delta^*\colon \cat{$\bm{E\mathcal M}$-$\bm G$-SSet}^\mu\to \cat{$\bm G$-SSet}$ is a simplicial left adjoint and that it sends generating cofibrations to cofibrations and $G$-global weak equivalences to $G$-equivariant weak equivalences. Thus, \cite[Corollary~A.3.7.10]{lurie} shows that the desired Bousfield localization exists, is simplicial, left proper, and combinatorial, and that $\delta^*$ descends to a left Quillen functor from the positive $G$-equivariant model structure. The $G$-equivariant weak equivalences are obviously stable under filtered colimits, and as any fibration of the positive $G$-equivariant model structure is also a fibration in the positive $G$-global model structure the same argument as in the proof of Proposition~\ref{prop:*-mod-quillen} shows right properness.

			Finally, the induced functor on homotopy categories is an equivalence by the combination of Propositions~\ref{prop:*-mod-equiv} and~\ref{prop:g-gl-vs-g-equiv}.
		\end{proof}
	\end{prop}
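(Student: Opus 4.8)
The plan is to realise the positive $G$-equivariant model structure as a left Bousfield localization of the positive $G$-global structure of Proposition~\ref{prop:*-mod-quillen}, by feeding the left adjoint $\delta^*$ into the general localization criterion \cite[Corollary~A.3.7.10]{lurie}. First I would record that $\delta^*\colon\cat{$\bm{E\mathcal M}$-$\bm G$-SSet}^\mu\to\cat{$\bm G$-SSet}$ really is a simplicial left adjoint: it is restriction along a map of simplicial monoids, hence preserves colimits, and since colimits in $\cat{$\bm{E\mathcal M}$-$\bm G$-SSet}^\mu$ agree with those computed in $\cat{$\bm{E\mathcal M}$-$\bm G$-SSet}$ by Lemma~\ref{lemma:tau-mu}, these are the usual objectwise colimits; as both sides are locally presentable (Theorem~\ref{thm:mild-presentable}) a right adjoint then exists by the adjoint functor theorem. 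It remains to supply the criterion with the compatibility of $\delta^*$ with cofibrations and weak equivalences.

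For the cofibrations, a direct inspection shows that $\delta^*$ carries each generating cofibration $E\Inj(A,\omega)^\mu\times_\phi G\times(\del\Delta^n\hookrightarrow\Delta^n)$ to a cofibration of $G$-simplicial sets. For the weak equivalences, the crucial input is the fixed-point identity $(\delta^*X)^K=X^{\delta|_K}$ for every subgroup $K\subseteq G$, where $\delta|_K$ denotes the injective homomorphism $j|_K$ regarded as a map into $G$ via $K\hookrightarrow G$; since the restriction of the universal embedding $j$ to $K$ again has universal image, this exhibits each $K$-fixed-point functor on $\cat{$\bm G$-SSet}$ as one of the graph fixed points from the definition of $G$-(global or equivariant) weak equivalences. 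Hence $\delta^*$ sends $G$-global weak equivalences to $G$-equivariant ones, and—using that universal embeddings are unique up to conjugation by an invertible element of $\mathcal M$, which induces isomorphisms on the relevant fixed points—the maps inverted by $\delta^*$ are exactly the $G$-equivariant weak equivalences. This identification is the content of Proposition~\ref{prop:g-gl-vs-g-equiv}, which I would cite rather than reprove.

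With these inputs in place, \cite[Corollary~A.3.7.10]{lurie} produces the localized model structure, automatically simplicial, left proper, and combinatorial, and makes $\delta^*$ a left Quillen functor. For the remaining structural properties I would argue exactly as in Proposition~\ref{prop:*-mod-quillen}. Filtered colimits are homotopical because fixed points for the finite groups $K\subseteq G$ commute with filtered colimits and weak homotopy equivalences of simplicial sets are stable under them. Right properness follows from the observation that a fibration of the localized structure is a fortiori a fibration of the positive $G$-global structure, so that the functors $R_\phi=\maps^{E\mathcal M}(E\mathcal M^\mu,-)^\phi$ (now taken only for injective $\phi$) still carry it to a Kan fibration; applying $R_\phi$ to a pullback square then exhibits $R_\phi g$ as the pullback of a weak homotopy equivalence along a Kan fibration, whence $g^\phi$ is a weak homotopy equivalence by right properness of the Kan–Quillen structure.

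Finally, to upgrade the left Quillen functor $\delta^*$ to a Quillen equivalence it suffices to check that its total left derived functor is an equivalence of homotopy categories, and here I would chain the two preceding comparisons: Proposition~\ref{prop:*-mod-equiv} identifies $\Ho(\cat{$\bm{E\mathcal M}$-$\bm G$-SSet}^\mu)$ with $\Ho(\cat{$\bm{E\mathcal M}$-$\bm G$-SSet})$ in the $G$-global world, while Proposition~\ref{prop:g-gl-vs-g-equiv} identifies the further localization of the latter at the $G$-equivariant weak equivalences with $\Ho(\cat{$\bm G$-SSet})$ via $\delta^*$, so that $\delta^*$ descends to an equivalence after passing to the positive $G$-equivariant structure. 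The main obstacle I anticipate is precisely the weak-equivalence bookkeeping of the second paragraph: pinning down that the class of maps inverted by $\delta^*$ coincides on the nose with the $G$-equivariant weak equivalences, which is where the conjugacy of universal embeddings and the graph-fixed-point comparison do the real work. Once this is secured, the model-categorical formalities follow the template already established for the positive $G$-global structure.
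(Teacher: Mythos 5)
Your proposal follows essentially the same route as the paper's proof: verify that $\delta^*$ is a simplicial left adjoint sending generating cofibrations to cofibrations and $G$-global weak equivalences to $G$-equivariant ones, invoke \cite[Corollary~A.3.7.10]{lurie} to produce the simplicial, left proper, combinatorial localization, handle filtered colimits and right properness by the same argument as for the positive $G$-global structure, and deduce the Quillen equivalence by chaining Propositions~\ref{prop:*-mod-equiv} and~\ref{prop:g-gl-vs-g-equiv}. The extra detail you supply (the graph-fixed-point bookkeeping and the conjugacy of universal embeddings) is exactly what the paper delegates to Proposition~\ref{prop:g-gl-vs-g-equiv}, so the argument is correct and matches the paper's.
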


 	\subsection{Parsummable simplicial sets and commutative \texorpdfstring{$\bm*$}{*}-algebras} In this final subsection we will lift the above results to the level of commutative monoid objects, which will give new models of equivariant and globally coherently commutative monoids. For this we first recall:

	\begin{defn}
		A \emph{parsummable simplicial set} is a commutative monoid object for the box product on $\cat{$\bm{E\mathcal M}$-SSet}^\tau$. We write $\cat{ParSumSSet}\mathrel{:=}\CMon(\cat{$\bm{E\mathcal M}$-SSet}^\tau)$ for the category of parsummable simplicial sets and monoid homomorphisms, and more generally $\cat{$\bm G$-ParSumSSet}\mathrel{:=}\CMon(\cat{$\bm{E\mathcal M}$-$\bm G$-SSet}^\tau)$.
	\end{defn}

	\begin{thm}
		The category $\cat{$\bm G$-ParSumSSet}$ of $G$-parsummable simplicial sets admits a model structure in which a map is a weak equivalence or fibration if and only if it so in the positive $G$-global model structure on $\cat{$\bm{E\mathcal M}$-$\bm G$-SSet}^\tau$. We call this the \emph{positive $G$-global model structure} again. It is combinatorial, simplicial, proper, and filtered colimits in it are homotopical.
		\begin{proof}
			See \cite[Theorem~2.1.36]{lenzGglobal}.
		\end{proof}
	\end{thm}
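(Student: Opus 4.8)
The plan is to obtain this model structure by right-inducing (transferring) the positive $G$-global model structure along the free--forgetful adjunction
$$\mathbb P\colon\cat{$\bm{E\mathcal M}$-$\bm G$-SSet}^\tau\rightleftarrows\CMon(\cat{$\bm{E\mathcal M}$-$\bm G$-SSet}^\tau):U,$$
where $\mathbb P(X)=\coprod_{n\ge0}X^{\boxtimes n}/\Sigma_n$ is the free commutative monoid functor for the box product and $U$ forgets the multiplication. Concretely I would apply the existence criterion of Proposition~\ref{prop:lurie-criterion} to $\mathscr C\mathrel{:=}\CMon(\cat{$\bm{E\mathcal M}$-$\bm G$-SSet}^\tau)$, taking $\mathscr W$ to be the maps $f$ for which $U(f)$ is a $G$-global weak equivalence and taking as set of generating cofibrations $I\mathrel{:=}\mathbb P(I_0)$, with $I_0$ the generating cofibrations $(\ref{eq:gen-cof-tame})$ of the positive $G$-global model structure on $\cat{$\bm{E\mathcal M}$-$\bm G$-SSet}^\tau$. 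The formal prerequisites are quickly dispatched: since the box product preserves filtered colimits in each variable (compare Corollary~\ref{cor:boxprod-closed}), the monad $U\mathbb P$ is finitary, so $\mathscr C$ is locally presentable and $U$ creates filtered colimits (and all limits).

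Two of the three conditions of Proposition~\ref{prop:lurie-criterion} are then formal. Condition~(1), perfectness of $\mathscr W$, follows from the second half of Remark~\ref{rk:all-about-perfect} applied to $U$ (which preserves filtered colimits) together with perfectness of the $G$-global weak equivalences downstairs. Condition~(3) is an adjunction argument: a map $p$ has the right lifting property against $I=\mathbb P(I_0)$ if and only if $U(p)$ has the right lifting property against $I_0$, i.e.\ is an acyclic fibration of the positive $G$-global model structure, and in particular $U(p)$ --- and hence $p$ --- is then a weak equivalence.

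The real work is condition~(2). Unravelling the definition of $\mathbb P$, it suffices to understand pushouts in $\mathscr C$ of the form $R\to R\amalg_{\mathbb P(K)}\mathbb P(L)$ for a generating cofibration $i\colon K\to L$ in $I_0$ and to show that these are homotopically well behaved. Here I would invoke the standard filtration expressing the underlying object $U\bigl(R\amalg_{\mathbb P(K)}\mathbb P(L)\bigr)$ as a sequential colimit $R=R_0\to R_1\to\cdots$ whose $q$-th layer $R_{q-1}\to R_q$ is a pushout of $R\boxtimes\bigl(i^{\ppo q}/\Sigma_q\bigr)$, the box product of $R$ with the $\Sigma_q$-orbits of the $q$-fold iterated pushout--product of $i$ with itself. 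When $i$ is additionally a $G$-global weak equivalence, I expect the key point --- and the main obstacle --- to be that each such layer is again a weak equivalence. This is where positivity is indispensable: the generating cofibrations are built from the objects $E\Inj(A,\omega)$ with $A$ a \emph{nonempty} faithful $H$-set, which carry free $\Sigma_q$-actions once box powers are formed (the disjoint-support condition of Definition~\ref{def:BoxProdT} forcing the permutation action to be free), so that passing to $\Sigma_q$-orbits computes the homotopy orbits. Making this precise is exactly the content of the $(G\times\Sigma_q)$-global refinement in Proposition~\ref{prop:operadic-prod-homotopical} combined with Theorem~\ref{thm:boxprodOperadic}: these identify $i^{\ppo q}/\Sigma_q$ up to $G$-global weak equivalence with a genuine homotopy-orbit construction, whence it is a weak equivalence whenever $i$ is. Feeding this back through the filtration and using that $\boxtimes$ preserves colimits in each variable then yields the required stability of $\mathscr W$ under pushout along $j$.

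Granting condition~(2), Proposition~\ref{prop:lurie-criterion} produces a left proper combinatorial model structure on $\mathscr C$ with weak equivalences $\mathscr W$. Its weak equivalences are $U$-detected by construction; to see that its fibrations are $U$-detected as well, one observes that the same filtration argument applied to a generating set $J_0$ of acyclic cofibrations downstairs shows that every relative $\mathbb P(J_0)$-cell complex lies in $\mathscr W$, whence $\mathbb P(J_0)$ is a set of generating acyclic cofibrations and $p$ is a fibration if and only if $U(p)$ has the right lifting property against $J_0$, i.e.\ is a fibration. The remaining assertions follow formally: filtered colimits are homotopical because $U$ creates them and detects weak equivalences, which are homotopical downstairs; right properness reduces to right properness of the base, since $U$ creates pullbacks and detects weak equivalences and fibrations; and the simplicial structure is obtained by checking the pushout--product axiom in its cotensor formulation, which transfers from $\cat{$\bm{E\mathcal M}$-$\bm G$-SSet}^\tau$ because the cotensor over simplicial sets is created by $U$. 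The single genuinely delicate input remains the orbit analysis of the free-monoid filtration in condition~(2).
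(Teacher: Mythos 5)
Your transfer argument is correct and is essentially the argument behind the paper's citation: the paper defers to \cite[Theorem~2.1.36]{lenzGglobal}, whose proof (reproduced in this paper for the mild analogue, Theorem~\ref{thm:*-alg-model}) likewise obtains the model structure by transfer along the free--forgetful adjunction, with the decisive input being exactly the one you isolate, namely that positivity forces free $\Sigma_q$-actions on box powers so that $i^{\ppo q}/\Sigma_q$ is acyclic for acyclic $i$ (cf.\ Proposition~\ref{prop:box-powers} and Lemma~\ref{lem:cofibrant-no-fixed-points}). The only cosmetic difference is that you run Lurie's criterion and the free-commutative-monoid filtration by hand where the paper invokes White's packaged Theorem~\ref{thm:comMon} via the monoid axiom and the strong commutative monoid axiom.
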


	\cite{lenzGglobal} makes the point that $G$-parsummable simplicial sets have the moral right to be called a model of `$G$-globally coherently commutative monoids': they are equivalent to $G$-global versions of Segal's $\Gamma$-spaces (Theorem~2.3.1 of \emph{op.~cit.}), they admit a $G$-global delooping theorem relating them to stable $G$-global homotopy theory (Theorem~3.4.21), for $G=1$ they recover Schwede's ultra-commutative monoids (Corollary~2.3.17), and for general $G$ they refine $G$-equivariantly coherently commutative monoids in the following sense:

	\begin{thm}
		The localization of $\cat{$\bm G$-ParSumSSet}$ at the \emph{$G$-equivariant} weak equivalences is equivalent to the homotopy theory of Shimakawa's $G$-equivariant special $\Gamma$-spaces \cite{shimakawa, ostermayr}.
	\end{thm}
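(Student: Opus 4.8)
The plan is to reduce the $G$-equivariant statement to the already available $G$-global one and then compare the two relevant localizations. By \cite[Theorem~2.3.1]{lenzGglobal} the positive $G$-global model structure on $\cat{$\bm G$-ParSumSSet}$ presents the same homotopy theory as $G$-global special $\Gamma$-spaces. Now every $G$-global weak equivalence is in particular a $G$-equivariant one---the former demands an equivalence on $\phi$-fixed points for \emph{all} homomorphisms $\phi\colon H\to G$, the latter only for the \emph{injective} ones---so the $G$-equivariant localization is a further left Bousfield localization of the $G$-global theory on both sides of this equivalence, exactly as in the non-commutative comparison of Proposition~\ref{prop:g-gl-vs-g-equiv}. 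It therefore suffices to check that the equivalence of \cite[Theorem~2.3.1]{lenzGglobal} matches the two $G$-equivariant localizations and that the $G$-equivariant localization of $G$-global special $\Gamma$-spaces is Shimakawa's model.

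First I would produce the $G$-equivariant localization on the parsummable side. On the underlying category the required input is already at hand: transporting the positive $G$-equivariant model structure on $\cat{$\bm{E\mathcal M}$-$\bm G$-SSet}^\mu$ constructed above along the Quillen equivalence of Proposition~\ref{prop:*-mod-equiv} furnishes a positive $G$-equivariant model structure on $\cat{$\bm{E\mathcal M}$-$\bm G$-SSet}^\tau$. To lift this to commutative monoids I would run the same argument as for the $G$-global structure \cite[Theorem~2.1.36]{lenzGglobal}: the positive model structure is arranged precisely so that the free commutative monoid functor is homotopical, the essential input being the $(G\times\Sigma_n)$-equivariant refinement of Proposition~\ref{prop:operadic-prod-homotopical}, which ensures that symmetric powers of cofibrant objects carry the correct equivariant homotopy type. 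Bousfield-localizing $\cat{$\bm G$-ParSumSSet}$ at the images under the free functor of a generating set for the underlying $G$-equivariant localization then yields the desired positive $G$-equivariant model structure.

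Next I would transfer the localization across \cite[Theorem~2.3.1]{lenzGglobal}. The constructions realizing that equivalence are assembled levelwise out of fixed-point data and are compatible with restriction along the graph subgroups $\Gamma_{H,\phi}$ for injective $\phi$; a direct inspection therefore shows that the equivalence preserves and reflects $G$-equivariant weak equivalences, so the two $G$-equivariant localizations correspond. It then remains to identify the $G$-equivariant localization of $G$-global special $\Gamma$-spaces. Here testing fixed points only along injective $\phi$---equivalently, restricting the admissible groups to those embedding into a complete $G$-set universe---cuts the $G$-global theory down to the genuine $G$-equivariant one, and the surviving specialness condition is exactly Shimakawa's strengthened one; the final comparison of this genuine $G$-equivariant $\Gamma$-space theory with Shimakawa's original definition is supplied by \cite{ostermayr}.

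The hard part will be the interaction of the localization with the commutative monoid structure. Localizing the \emph{underlying} homotopy theory from $G$-global to $G$-equivariant is routine, but lifting it to commutative monoids forces one to check that the free commutative monoid functor stays homotopical after localizing---i.e.~that symmetric powers of cofibrant objects still compute the right equivariant homotopy type once the non-injective fixed points are discarded. This is precisely where the genuine nature of equivariant commutativity enters: the naive expectation that $G$-objects in ordinary commutative monoids would suffice fails \cite{guillou-may}, so one must genuinely exploit the $\Sigma_n$-equivariance in Proposition~\ref{prop:operadic-prod-homotopical} to see that the correct norm structure on the injective $\phi$-fixed points emerges and that it recovers Shimakawa's specialness rather than a weaker or stronger variant.
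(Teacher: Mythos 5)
The paper does not actually prove this statement: it is quoted verbatim from \cite[Corollary~2.1.38 and Theorem~2.3.18]{lenzGglobal}, and the proof consists of that citation alone. Your sketch is therefore necessarily a reconstruction of the cited argument rather than a parallel to anything in the present text. As such it has the right overall shape---localize the established $G$-global equivalence with $\Gamma$-spaces at the $G$-equivariant weak equivalences (which do contain the $G$-global ones, as you correctly note) and then identify the localized $\Gamma$-space theory with Shimakawa's via \cite{ostermayr}---and this is indeed the strategy of the reference.

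Two points deserve pushback. First, ``transporting the positive $G$-equivariant model structure on $\cat{$\bm{E\mathcal M}$-$\bm G$-SSet}^\mu$ along the Quillen equivalence of Proposition~\ref{prop:*-mod-equiv}'' is not a legitimate operation: model structures do not transfer along Quillen equivalences in general. Either construct the $G$-equivariant localization directly on the tame side (as \cite{lenzGglobal} does, by Bousfield-localizing the positive $G$-global model structure on $\cat{$\bm G$-ParSumSSet}$ at a suitable set of maps), or observe that the statement only concerns the localization of a category at a class of weak equivalences, which exists without any model structure. Second, the step you dispose of by ``direct inspection''---that the $G$-global comparison functor to $\Gamma$-spaces preserves and reflects $G$-equivariant weak equivalences, and that the resulting localized $\Gamma$-space theory is precisely Shimakawa's \emph{special} $\Gamma$-$G$-spaces rather than some other variant---is where essentially all of the content of \cite[Theorem~2.3.18]{lenzGglobal} lives; the specialness condition on the $G$-equivariant side is genuinely stronger than levelwise specialness, and verifying that it is what survives the localization is not routine. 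By contrast, the concern in your final paragraph about symmetric powers and norm structures is largely beside the point here: one is localizing the already-constructed category of commutative monoids, not lifting a monoidal localization, so the free commutative monoid functor plays no role in this particular deduction.
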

	\begin{proof}
		See \cite[Corollary~2.1.38 and Theorem~2.3.18]{lenzGglobal}.
	\end{proof}

	We can now finally introduce the key concept of this paper:

	\begin{defn}
		A \emph{commutative $*$-algebra} is a commutative monoid object in $(\cat{$\bm{E\mathcal M}$-SSet}^\mu,\boxtimes)$. We write $\cat{$\bm*$-Alg}\mathrel{:=}\CMon(\cat{$\bm{E\mathcal M}$-SSet}^\mu)$ for the category of commutative $*$-algebras and more generally $\cat{$\bm G$-$\bm*$-Alg}\mathrel{:=}\CMon(\cat{$\bm{E\mathcal M}$-$\bm G$-SSet}^\mu)$ for any finite group $G$.
	\end{defn}

	\begin{rem}
		Commutative monoids for the box product $\boxtimes_{\mathcal L}$ on $\mathcal L$-spaces (as recalled in Remark~\ref{rk:boxtimes-classical}) are studied in \cite[Section~4]{blum}; in particular, by Theorem~4.19 of \emph{op.~cit.} they admit a model structure modelling non-equivariant coherently commutative monoids.
	\end{rem}

	\begin{rem}\label{rk:*-alg-vs-operadic-alg}
		The above commutative $*$-algebras are closely related to algebras over the injections operad $\mathcal I$: namely, Theorem~\ref{thm:boxprodOperadic} implies via the same argument as in \cite[Proposition~5.22]{lenzOperads} that we have an \emph{isomorphism of categories} between $\mathcal I$-algebras in $\cat{SSet}$ whose underlying $E\mathcal M$-simplicial set is mild and commutative $*$-algebras given by sending an $\mathcal I$-algebra $\mathcal A$ to its underlying $E\mathcal M$-simplicial set with sum given by the composite $\mathcal A\boxtimes\mathcal A\cong E\Inj(2\times\omega,\omega)\times_{E\mathcal M^2}\mathcal A^{\times 2}\to\mathcal A$ of the inverse of the isomorphism $\Phi$ and the map induced by the action of the $2$-ary operations, also cf.~\cite[V.3]{kriz-may} for a similar statement in the realm of chain complexes or \cite[Proposition~4.8]{blum} for the corresponding result in the world of $\mathcal L$-spaces.
	\end{rem}

	\begin{thm}\label{thm:*-alg-model}
		The category $\cat{$\bm G$-$\bm*$-Alg}$ admits a model structure in which a map is a weak equivalence or fibration if and only if it is so in the positive $G$-global model structure on $\cat{$\bm{E\mathcal M}$-$\bm G$-SSet}^\mu$. We call this the \emph{positive $G$-global model structure} again; it is combinatorial, simplicial, proper, and filtered colimits in it are homotopical. Moreover, the adjunction $\incl\colon\cat{$\bm G$-ParSumSSet}\rightleftarrows\cat{$\bm G$-$\bm*$-Alg}:\!(-)^\tau$ is a Quillen equivalence.
	\end{thm}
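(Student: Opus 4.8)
The plan is to produce the model structure by right-induction (transfer) along the free--forgetful adjunction $\mathbb{P}\colon\cat{$\bm{E\mathcal M}$-$\bm G$-SSet}^\mu\rightleftarrows\cat{$\bm G$-$\bm*$-Alg}:\!U$, where $\mathbb{P}X=\coprod_{n\ge0}X^{\boxtimes n}/\Sigma_n$ is the free commutative monoid for $\boxtimes$ and $U$ forgets the multiplication, and then to deduce the Quillen equivalence from the already-established equivalence of the underlying homotopy theories. Both categories are locally presentable: the source by Theorem~\ref{thm:mild-presentable}, and the target as the category of commutative monoids in the locally presentable closed symmetric monoidal category $(\cat{$\bm{E\mathcal M}$-$\bm G$-SSet}^\mu,\boxtimes)$ (Corollary~\ref{cor:boxprod-closed}); equivalently one may use the isomorphism with mild $\mathcal I$-algebras from Remark~\ref{rk:*-alg-vs-operadic-alg}. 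Declaring a map of $*$-algebras to be a weak equivalence or fibration exactly when $U$ takes it to one in the positive $G$-global model structure, the existence of the transferred structure reduces by the standard criterion to a single \emph{acyclicity} statement: $U$ must carry every pushout of a free map $\mathbb{P}(j)$, with $j$ a generating acyclic cofibration from $(\ref{eq:gen-cof-pos-mild})$, and every transfinite composite of such, to a $G$-global weak equivalence.

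Verifying this acyclicity is the step I expect to be the main obstacle. I would analyze a pushout $A\to B$ of a free map along an arbitrary $\mathbb{P}K\to A$ by means of the usual filtration of free extensions of commutative monoids: $B$ is the sequential colimit of a tower $A=B_0\to B_1\to\cdots$ whose $n$-th stage is a pushout attaching $A\boxtimes(L^{\boxtimes n})$ along the $\Sigma_n$-equivariant filtration of $L^{\boxtimes n}$ relative to $K$, after passage to $\Sigma_n$-orbits. The crucial inputs are now available in the mild setting: because $\boxtimes$ coincides with the operadic product (Theorem~\ref{thm:boxprodOperadic}), each symmetric power is computed by $E\Inj(n\times\omega,\omega)\times_{E\mathcal M^n}({-})^{\times n}$, and Proposition~\ref{prop:operadic-prod-homotopical} shows this construction is a $(G\times\Sigma_n)$-global weak equivalence. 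The positivity encoded in the faithful $H$-sets $A$ makes the relevant $\Sigma_n$-actions sufficiently free, so that forming $\Sigma_n$-orbits of the equivariant (acyclic) cofibrations produced by the filtration preserves $G$-global weak equivalences. Assembling the layers shows each $A\to B$ is a $G$-global weak equivalence; since filtered colimits are homotopical (Proposition~\ref{prop:*-mod-quillen}), this propagates to arbitrary relative cell complexes. This is exactly the argument used for parsummable simplicial sets in \cite[Theorem~2.1.36]{lenzGglobal}, and it transfers essentially verbatim because all of its homotopical hypotheses have been re-established for mild objects. The remaining formal properties---simpliciality, properness, and homotopical filtered colimits---are then inherited from the positive $G$-global model structure on $\cat{$\bm{E\mathcal M}$-$\bm G$-SSet}^\mu$ exactly as in the tame case.

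For the comparison with parsummable simplicial sets, I would first check that $\incl\colon\cat{$\bm G$-ParSumSSet}\rightleftarrows\cat{$\bm G$-$\bm*$-Alg}:\!(-)^\tau$ is a Quillen adjunction: the inclusion $\cat{$\bm{E\mathcal M}$-$\bm G$-SSet}^\tau\hookrightarrow\cat{$\bm{E\mathcal M}$-$\bm G$-SSet}^\mu$ is strong symmetric monoidal, hence lifts to commutative monoids, and its left adjoint sends the generating (acyclic) cofibrations of $\cat{$\bm G$-ParSumSSet}$---which are $\mathbb{P}$ applied to the tame generators---to (acyclic) cofibrations by construction; moreover $(-)^\tau$ on algebras is computed on underlying objects (being a strong monoidal right adjoint it sends submonoids to submonoids), so both forgetful functors commute with $\incl$ and with $(-)^\tau$. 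To see it is a Quillen equivalence, I would test the defining criterion on a cofibrant parsummable simplicial set $A$ and a fibrant $*$-algebra $B$: a map $\incl A\to B$ is a weak equivalence if and only if its adjunct $A\to B^\tau$ is, and since the forgetful functors \emph{create} the $G$-global weak equivalences, applying them turns this into the analogous statement for the underlying adjunction between tame and mild objects, which is a Quillen equivalence by Proposition~\ref{prop:*-mod-equiv}. The single delicate point is that the underlying object of a cofibrant commutative monoid be cofibrant (or flat) enough for this reduction to compute the correct derived functors; this I would extract from the existence proof, since the very same positivity-and-freeness analysis that controls the symmetric-power filtration shows that $\mathbb{P}$ preserves positive cofibrancy, so that cofibrant commutative monoids have well-behaved underlying objects. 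Granting this, the underlying Quillen equivalence yields the equivalence on monoids, completing the proof.
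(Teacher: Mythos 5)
Your overall strategy agrees with the paper's in substance: the model structure is obtained by transfer along the free--forgetful adjunction, with the homotopical control coming from the operadic description of $\boxtimes$ (Theorem~\ref{thm:boxprodOperadic}), Proposition~\ref{prop:operadic-prod-homotopical}, and the freeness of $\Sigma_n$-actions on box powers forced by positivity; and the Quillen equivalence is deduced from Proposition~\ref{prop:*-mod-equiv} using that weak equivalences and fibrations are created on underlying objects. The packaging differs: rather than unwinding the symmetric-power filtration by hand, the paper verifies the hypotheses of White's theorem (Theorem~\ref{thm:comMon}) --- pushout product axiom, monoid axiom, and strong commutative monoid axiom --- plus Proposition~\ref{prop:cmon-left-proper} for left properness. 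That route makes visible two points your plan glosses over. First, the claim that ``positivity makes the relevant $\Sigma_n$-actions sufficiently free'' is precisely Lemma~\ref{lem:cofibrant-no-fixed-points} combined with Proposition~\ref{prop:box-powers}, and the former is not automatic: one proves by cell induction (using Lemma~\ref{lemma:complement}) that cofibrant mild objects have no $\mathcal M$-fixed vertices, and one must then arrange to test acyclicity only on maps with cofibrant source and target, which the paper does by invoking \cite[Corollary~2.7]{bar} to produce generating \emph{acyclic} cofibrations between cofibrant objects; your transfer argument needs the same device, since the generating acyclic cofibrations of the positive model structure on $\cat{$\bm{E\mathcal M}$-$\bm G$-SSet}^\mu$ are not given explicitly. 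Second, the cofibration half of the filtration requires the explicit identifications of box products and symmetric box powers of generating cells (Example~\ref{ex:boxtimes-Inj}, the computation of $i^{\ppo n}/\Sigma_n$ as an $E\Inj(n\times A,\omega)^\mu\times_{\Sigma_n\wr H}(-)$-cell, and Lemma~\ref{lem:more-cofibrations}); this is where faithfulness of $n\times A$ enters and it must be carried out in the mild setting rather than imported verbatim from the tame case. Finally, your worry about cofibrant commutative monoids having cofibrant underlying objects is an unnecessary detour: since the inclusion of tame into mild objects is homotopical and weak equivalences of algebras are created on underlying objects, it suffices that $\incl$ preserves and reflects all weak equivalences and that the counit $A^\tau\hookrightarrow A$ is a weak equivalence for fibrant $A$, which is how the paper concludes without any cofibrancy or flatness input.
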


	The proof will be given below after some recollections and preparations. Before that, we note the following direct $G$-equivariant consequence:

	\begin{cor}\label{cor:g-*-equivariant}
		The localization of $\cat{$\bm G$-$\bm*$-Alg}$ at the \emph{$G$-equivariant} weak equivalences is equivalent to the homotopy theory of special $\Gamma$-$G$-spaces.\qed
	\end{cor}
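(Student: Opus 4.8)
The plan is to transport the analogous statement for $G$-parsummable simplicial sets along the Quillen equivalence of Theorem~\ref{thm:*-alg-model}. The theorem immediately preceding this corollary already identifies the localization of $\cat{$\bm G$-ParSumSSet}$ at the $G$-equivariant weak equivalences with the homotopy theory of (Shimakawa's) special $\Gamma$-$G$-spaces, so it suffices to exhibit an equivalence between the $G$-equivariant localizations of $\cat{$\bm G$-ParSumSSet}$ and $\cat{$\bm G$-$\bm*$-Alg}$.

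First I would invoke Theorem~\ref{thm:*-alg-model}: the adjunction $\incl\colon\cat{$\bm G$-ParSumSSet}\rightleftarrows\cat{$\bm G$-$\bm*$-Alg}:\!(-)^\tau$ is a Quillen equivalence for the positive $G$-global model structures, and hence induces an equivalence $\Ho(\cat{$\bm G$-ParSumSSet})\simeq\Ho(\cat{$\bm G$-$\bm*$-Alg})$ on the homotopy categories formed with respect to the $G$-global weak equivalences. Since $\incl$ preserves $G$-global weak equivalences, this equivalence is simply the functor induced by $\incl$ itself, with quasi-inverse induced by the derived functor $(-)^\tau$.

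The key observation is then that $\incl$ leaves underlying $E\mathcal M$-$G$-simplicial sets (and the maps between them) unchanged, merely reinterpreting a tame object as a mild one, whereas the $G$-equivariant weak equivalences are by definition detected on underlying objects through the fixed-point functors $(-)^\phi$ for injective $\phi\colon H\to G$. Consequently $\incl$ both preserves and reflects $G$-equivariant weak equivalences, so the induced equivalence of $G$-global homotopy categories carries the class of $G$-equivariant weak equivalences on one side bijectively onto the corresponding class on the other. As an equivalence of categories matching up two classes of morphisms descends to an equivalence of the respective localizations, this yields $\cat{$\bm G$-ParSumSSet}[\mathscr W^{-1}]\simeq\cat{$\bm G$-$\bm*$-Alg}[\mathscr W^{-1}]$ for $\mathscr W$ the $G$-equivariant weak equivalences; chaining with the preceding theorem gives the claim. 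The only non-formal point is the compatibility of the induced equivalence with $\mathscr W$, and since $\incl$ acts as the identity on underlying objects this reduces entirely to the fact that $\mathscr W$ is created by the forgetful functor to $\cat{$\bm{E\mathcal M}$-$\bm G$-SSet}$ — which is precisely why the statement can be recorded as an immediate corollary with no substantial obstacle.
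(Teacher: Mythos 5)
Your argument is correct and is exactly the one the paper intends: the corollary is recorded with a \verb|\qed| precisely because it follows by transporting the statement for $\cat{$\bm G$-ParSumSSet}$ along the Quillen equivalence of Theorem~\ref{thm:*-alg-model}, using that the inclusion is the identity on underlying $E\mathcal M$-$G$-simplicial sets and hence preserves and reflects $G$-equivariant weak equivalences. No substantive difference from the paper's (implicit) proof.
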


	\begin{rem}
		The $G$-equivariant weak equivalences are again part of a suitable model structure on $\cat{$\bm G$-$\bm*$-Alg}$, which can be constructed by exactly the same arguments as we will use for the positive $G$-global model structure below.
	\end{rem}

	Just like its tame sibling, the model structure from Theorem~\ref{thm:*-alg-model} will be obtained via the general machinery of \cite{white}, and we begin by recalling the necessary terminology.

 	\begin{defn}
 		A symmetric monoidal model category $ \mathscr{C} $ satisfies the \emph{monoid axiom} if every transfinite composition of pushouts of maps of the form $ X\otimes j $ with $ X\in \mathscr{C} $ and $ j $ an acyclic cofibration is a weak equivalence.
 	\end{defn}

 	\begin{const}
 		Let $ \mathscr{C} $ be a cocomplete symmetric monoidal category; for ease of notation we will pretend below that its tensor product is strictly associative.

		We let $C_n$ denote the $n$-cube, i.e.~the poset of subsets of $\{1,\dots,n\}$.  Given any map $ f\colon X\to Y$ in $\mathscr C$, we obtain a diagram $K^n(f)\colon C_n\to\mathscr C$ by sending $ I\subset \{1,\dots,n\} $ to $ Z_1\otimes \dots \otimes Z_n $ where $ Z_i = Y $ for $ i\in I $ and $ Z_i = X $ otherwise, with structure maps the appropriate tensor products of $ f $ and the respective identities.

 		We write $ Q^n (f) $ for the colimit of the subdiagram of $ C_n $ where the terminal vertex has been removed, which comes with a map $f^{\ppo n}\colon Q^n(f)\to Y^{\otimes n}$ induced by the remaining structure maps. Moreover, $ Q^n (f) $ has a natural $ \Sigma_n $ action induced by the $ \Sigma_n $-action on $ C_n $ and the symmetry isomorphisms of the tensor product; with respect to this action and the permutation action on $Y^{\otimes n}$, the above map $f^{\ppo n}$ is $\Sigma_n$-equivariant; in particular, it descends to a map $Q^n(f)/\Sigma_n\to Y^{\otimes n}/\Sigma_n$.
 	\end{const}

 	\begin{defn}
 		A symmetric monoidal model category $ \mathscr{C} $ satisfies the \emph{strong commutative monoid axiom} if $ i^{\ppo n} /\Sigma_n $ is a cofibration for all $ n\geq 0 $ and every cofibration $ i$ of $\mathscr{C} $, acyclic whenever $i$ is.
 	\end{defn}

 	\begin{thm}[White]\label{thm:comMon}
 		Let $ \mathscr{C} $ be a combinatorial symmetric monoidal model category that satisfies the monoid axiom and the strong commutative monoid axiom. Then $\CMon(\mathscr{C}) $ admits a model structure in which a map is a weak equivalence or fibration if and only if it is so in $\mathscr C$. This model structure is again combinatorial, and it is right proper if $\mathscr C$ is so. Finally, if $\mathscr C$ is a simplicial model category such that the tensor product preserves the $\cat{SSet}$-tensoring in each variable, then $\CMon(\mathscr C)$ is again simplicial.
 	\end{thm}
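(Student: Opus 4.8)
The plan is to obtain the model structure on $\CMon(\mathscr C)$ by transfer along the free--forgetful adjunction $\mathbb P\colon\mathscr C\rightleftarrows\CMon(\mathscr C):\!U$, where $U$ is the forgetful functor and $\mathbb P(X)=\coprod_{n\ge0}X^{\otimes n}/\Sigma_n$ is the free commutative monoid. First I would record that $\CMon(\mathscr C)$ is the category of algebras over an accessible monad on the locally presentable category $\mathscr C$, hence is itself locally presentable, and that $U$ creates limits and filtered colimits. The candidate model structure has weak equivalences and fibrations created by $U$, generating cofibrations $\mathbb P(I)$ and generating acyclic cofibrations $\mathbb P(J)$, where $I$ and $J$ generate the cofibrations and acyclic cofibrations of $\mathscr C$. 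By the standard transfer theorem for combinatorial model categories it then suffices to verify the single \emph{acyclicity condition}: every map obtained from $\mathbb P(J)$ by cobase change and transfinite composition in $\CMon(\mathscr C)$ is sent by $U$ to a weak equivalence of $\mathscr C$.

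The technical heart is a filtration of pushouts of free maps. Given $j\colon X\to Y$ in $\mathscr C$ and a pushout in $\CMon(\mathscr C)$ of $\mathbb P(j)$ along a map $\mathbb P(X)\to R$, with resulting monoid $P$, I would construct a factorization $R=P_0\to P_1\to P_2\to\cdots$ with $\colim_n P_n=P$ in which each $P_{n-1}\to P_n$ is a pushout, computed in $\mathscr C$, of the map $\id_R\otimes(j^{\ppo n}/\Sigma_n)\colon R\otimes Q^n(j)/\Sigma_n\to R\otimes Y^{\otimes n}/\Sigma_n$. This is the familiar identification of the $n$-th layer of the symmetric-algebra filtration, and establishing it --- carefully tracking the $\Sigma_n$-coinvariants and the interaction of the coproduct in $\CMon(\mathscr C)$ with the symmetric powers $({-})^{\otimes n}/\Sigma_n$ --- is the main obstacle; the rest is formal bookkeeping.

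Granting the filtration, the acyclicity condition follows by combining the two axioms. If $j\in J$ is an acyclic cofibration, the strong commutative monoid axiom makes each $j^{\ppo n}/\Sigma_n$ an acyclic cofibration, so $\id_R\otimes(j^{\ppo n}/\Sigma_n)$ lies in the class of maps that the monoid axiom forces to become weak equivalences under cobase change and transfinite composition. Hence each layer $P_{n-1}\to P_n$ is a pushout of such a map, and $R\to P=\colim_n P_n$ is a transfinite composite of these, so the monoid axiom gives that $U(R\to P)$ is a weak equivalence. Closure of this class under further cobase change and transfinite composition in $\CMon(\mathscr C)$ is handled identically, which establishes the acyclicity condition and hence the existence of the combinatorial model structure.

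Finally I would deduce the remaining assertions formally. Combinatoriality is immediate from local presentability together with the explicit generating sets $\mathbb P(I)$ and $\mathbb P(J)$. For right properness, $U$ creates fibrations, weak equivalences, and all limits, so a pullback in $\CMon(\mathscr C)$ of a weak equivalence along a fibration is carried by $U$ to a pullback in $\mathscr C$ of a weak equivalence along a fibration, where right properness of $\mathscr C$ applies. For the simplicial structure, the cotensor $A^K$ of a commutative monoid $A$ by $K\in\cat{SSet}$ is again a commutative monoid computed on underlying objects, and the hypothesis that $\otimes$ preserves the $\cat{SSet}$-tensoring in each variable supplies the required tensor--cotensor adjunctions; the pushout-product axiom then reduces, via the underlying detection of fibrations and weak equivalences by $U$, to the corresponding statement already available in $\mathscr C$.
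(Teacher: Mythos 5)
The paper does not actually prove this statement: its ``proof'' consists of citing White's Theorem~3.2 for the existence and combinatoriality of the transferred model structure, and a standard lemma on transferred model structures for right properness and simpliciality. Your proposal instead reconstructs the argument, and what you reconstruct is precisely the strategy of the cited source: transfer along the free--forgetful adjunction $\mathbb P\dashv U$, reduction via the combinatorial transfer theorem to the acyclicity condition, and the symmetric-power filtration $R=P_0\to P_1\to\cdots$ of a pushout of $\mathbb P(j)$ whose $n$-th layer is a pushout in $\mathscr C$ of $\id_R\otimes(j^{\ppo n}/\Sigma_n)$, at which point the strong commutative monoid axiom and the monoid axiom combine exactly as you describe; your treatment of the formal remainder (local presentability of $\CMon(\mathscr C)$ as algebras over an accessible monad, right properness via $U$ creating limits, fibrations and weak equivalences, and the cotensor-based verification of the simplicial axioms) likewise matches the standard arguments the paper alludes to. The one substantive caveat is that the filtration identification, which you correctly single out as the main obstacle, is essentially the entire technical content of the theorem --- White establishes it by a separate inductive construction, tracking how the coproduct in $\CMon(\mathscr C)$ interacts with $\Sigma_n$-coinvariants --- so as written your proposal is an accurate and correctly structured roadmap rather than a complete proof; everything you do spell out is sound.
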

 	\begin{proof}
 		The existence of the model structure and its combinatoriality is \cite[Theorem 3.2]{white} and the discussion after it. The remaining statements are standard facts about transferred model structures \cite[Lemma~A.2.14]{lenzGglobal}.
 	\end{proof}

	The question of left properness is somewhat more subtle:

	\begin{prop}\label{prop:cmon-left-proper}
		In the situation of the previous theorem, assume the following:
		\begin{enumerate}
			\item $\mathscr C$ is left proper and filtered colimits in it are homotopical.
			\item $\mathscr C$ admits a set of generating cofibrations with cofibrant sources.
			\item For any $X\in\mathscr C$ and any cofibration $i$, pushouts along $X\otimes i$ are homotopy pushouts.
			\item For any cofibrant $X$, the functor $X\otimes{-}$ is homotopical.
		\end{enumerate}
		Then the above model structure on $\CMon(\mathscr C)$ is again left proper.
		\begin{proof}
			This is an instance of \cite[Theorem~4.17]{white}.
		\end{proof}
	\end{prop}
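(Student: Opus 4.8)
The plan is to reprove left-properness directly, reducing it to the behaviour of pushouts of free commutative monoids. First I would fix the class $\mathcal J$ of cofibrations $j\colon A\to B$ of $\CMon(\mathscr C)$ with the property that for every weak equivalence $g\colon A\to C$ the canonical map $B\to B\sqcup_A C$ is again a weak equivalence; left-properness is exactly the assertion that $\mathcal J$ contains all cofibrations. Since weak equivalences of $\CMon(\mathscr C)$ are created in $\mathscr C$ (Theorem~\ref{thm:comMon}) and filtered colimits there are homotopical by Hypothesis~(1), a routine argument with the pasting calculus for pushouts shows that $\mathcal J$ is closed under retracts and transfinite composition. As every cofibration of $\CMon(\mathscr C)$ is a retract of a transfinite composite of cobase changes of the generating cofibrations $\mathbb P(i)$, where $i$ ranges over the generating cofibrations of $\mathscr C$ and $\mathbb P$ is the free commutative monoid functor, it therefore suffices to show that every cobase change of such a $\mathbb P(i)$ lies in $\mathcal J$; here the domain $X$ of $i$, and hence also its codomain $Y$, is cofibrant by Hypothesis~(2).

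Next I would invoke the standard filtration of pushouts along free maps. A cobase change of $\mathbb P(i)$ has the form $A\to B:=A\sqcup_{\mathbb P(X)}\mathbb P(Y)$ for a structure map $\mathbb P(X)\to A$; given a weak equivalence $g\colon A\to C$, set $D:=C\sqcup_{\mathbb P(X)}\mathbb P(Y)$, so that $B\sqcup_A C\cong D$ and the map $B\to B\sqcup_A C$ is identified with the map $B\to D$ induced by $g$. The underlying object of $A\to B$ is then the sequential colimit of a filtration $A=B_0\to B_1\to\cdots$ in $\mathscr C$ whose stages are pushouts
\begin{equation*}
B_n \;=\; B_{n-1}\sqcup_{A\otimes(Q^n(i)/\Sigma_n)}\bigl(A\otimes(Y^{\otimes n}/\Sigma_n)\bigr),
\end{equation*}
i.e.\ pushouts of $\id_A\otimes(i^{\ppo n}/\Sigma_n)$, and likewise for $C\to D$ with $A$ replaced by $C$. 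The map $g$ induces a compatible map of filtrations $B_n\to D_n$, and I would prove by induction on $n$ that each $B_n\to D_n$ is a weak equivalence, the base case $B_0=A\xrightarrow{g}C=D_0$ being immediate.

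For the inductive step I would compare the two defining pushout squares via the gluing lemma. The legs $\id_A\otimes(i^{\ppo n}/\Sigma_n)$ and $\id_C\otimes(i^{\ppo n}/\Sigma_n)$ are of the form $(\text{object})\otimes(\text{cofibration})$, the map $i^{\ppo n}/\Sigma_n$ being a cofibration by the strong commutative monoid axiom, so Hypothesis~(3) makes both squares homotopy pushouts. It then remains to verify that the three comparison maps are weak equivalences: the map $B_{n-1}\to D_{n-1}$ by the inductive hypothesis, and the maps $g\otimes\id_{Q^n(i)/\Sigma_n}$ and $g\otimes\id_{Y^{\otimes n}/\Sigma_n}$ because, as explained below, both $Q^n(i)/\Sigma_n$ and $Y^{\otimes n}/\Sigma_n$ are cofibrant, whence tensoring the weak equivalence $g$ with them preserves weak equivalences by Hypothesis~(4) and symmetry. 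Passing to the sequential colimit and once more invoking that filtered colimits are homotopical (Hypothesis~(1)) then shows $B\to D$ is a weak equivalence, completing the reduction.

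The main obstacle is precisely the cofibrancy of these symmetric constructions. For $Y^{\otimes n}/\Sigma_n$ one applies the strong commutative monoid axiom to the cofibration $\emptyset\to Y$, whose punctured-cube colimit $Q^n(\emptyset\to Y)$ is initial, so that $\emptyset\to Y^{\otimes n}/\Sigma_n$ is a cofibration; for $Q^n(i)/\Sigma_n$ one needs a more careful equivariant analysis, building it up by a secondary filtration from the cofibrant object $X^{\otimes n}/\Sigma_n$ along cofibrations again supplied by the strong commutative monoid axiom. I expect this equivariant cofibrancy bookkeeping, rather than the formal reduction and gluing steps, to be the genuinely delicate point; it is exactly the content packaged in \cite[Theorem~4.17]{white}.
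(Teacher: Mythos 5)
Your proposal is correct and takes essentially the same route as the paper's proof, which is just an appeal to White's Theorem~4.17: the reduction to pushouts of free maps $\mathbb{P}(i)$, the filtration with stages given by pushouts of $\id\otimes(i^{\ppo n}/\Sigma_n)$, the gluing-lemma comparison using Hypotheses (3) and (4), and the cofibrancy of $Y^{\otimes n}/\Sigma_n$ and $Q^n(i)/\Sigma_n$ (the latter via the secondary filtration starting at $X^{\otimes n}/\Sigma_n$, whose attaching maps are pushout products of maps $i^{\ppo k}/\Sigma_k$ with cofibrant objects $X^{\otimes(n-k)}/\Sigma_{n-k}$) are exactly the ingredients of White's argument. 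One small point: closure of your class $\mathcal J$ under \emph{arbitrary} retracts is dubious for this ``weak'' class, but it is also not needed, since the retract furnished by the small object argument has the same source as the cell complex, and for retracts under a fixed object the closure is indeed routine.
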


	In order to apply this to our situation, we first have to understand the homotopical behaviour of the box product better.

	\begin{lem}\label{lem:boxprod-homotopical}
		The box product on $\cat{$\bm{E\mathcal M}$-$\bm G$-SSet}^\mu$ preserves $G$-global weak equivalences in each variable.
		\begin{proof}
			The cartesian product on $\cat{$\bm{E\mathcal M}$-$\bm G$-SSet}$ obviously has this property, so the lemma follows by combining Theorem~\ref{thm:boxprodOperadic} with Proposition~\ref{prop:operadic-prod-homotopical}.
		\end{proof}
	\end{lem}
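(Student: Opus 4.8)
The plan is to exploit the operadic description of the box product from Theorem~\ref{thm:boxprodOperadic} in order to reduce this two-variable statement to the single-variable homotopical input provided by Proposition~\ref{prop:operadic-prod-homotopical}. The issue to be circumvented is that $\boxtimes$ is only a \emph{subcomplex} of the cartesian product (and the cartesian product of mild objects need not even be mild), so homotopical behaviour is not inherited from $\times$ in any obvious way; the operadic reformulation is precisely what lets us trade the subcomplex for a homotopy pushout-type construction built from $\times$.

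First I would fix a mild $E\mathcal M$-$G$-simplicial set $Y$ together with a $G$-global weak equivalence $f\colon X\to X'$, and aim to show that $f\boxtimes\id_Y$ is again a $G$-global weak equivalence, the argument in the second variable being entirely symmetric. Specializing Construction~\ref{const:operadBox} to $n=2$ and using naturality of $\Phi$ in the first variable, I obtain the commutative square
\begin{equation*}
	\begin{tikzcd}
		E\Inj(2\times\omega,\omega)\times_{E\mathcal M^2}(X\times Y)\arrow[r]\arrow[d,"\Phi"'] & E\Inj(2\times\omega,\omega)\times_{E\mathcal M^2}(X'\times Y)\arrow[d,"\Phi"]\\
		X\times Y\arrow[r,"f\times\id_Y"'] & X'\times Y
	\end{tikzcd}
\end{equation*}
in the ambient category $\cat{$\bm{E\mathcal M}$-$\bm G$-SSet}$, where the cartesian product is formed and $\Phi$ lives. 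The two vertical maps are $G$-global weak equivalences by the first half of Proposition~\ref{prop:operadic-prod-homotopical}. The bottom map is one as well: for any universal $H\subset\mathcal M$ and any $\phi\colon H\to G$ the relevant fixed points commute with products, so $(f\times\id_Y)^\phi\cong f^\phi\times\id_{Y^\phi}$, and the product of a weak homotopy equivalence with an identity is again a weak homotopy equivalence (this is the ``obviously'' for the cartesian product). By $2$-out-of-$3$ the top horizontal map is therefore a $G$-global weak equivalence.

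It remains only to observe that Theorem~\ref{thm:boxprodOperadic} identifies the two horizontal source and target objects with $X\boxtimes Y$ and $X'\boxtimes Y$ naturally in $f$, under which identification the top map becomes $f\boxtimes\id_Y$; this finishes the proof. There is no genuine obstacle once Theorem~\ref{thm:boxprodOperadic} and Proposition~\ref{prop:operadic-prod-homotopical} are in hand — the one thing to keep straight is that the whole square is to be read in $\cat{$\bm{E\mathcal M}$-$\bm G$-SSet}$ (not in the mild subcategory with its $\boxtimes$), with the passage to $\boxtimes$ happening only at the very end via the natural isomorphism $\Phi$.
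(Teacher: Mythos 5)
Your proposal is correct and is precisely the argument the paper's one-line proof compresses: transfer along the natural map $\Phi$ from the operadic product, use Proposition~\ref{prop:operadic-prod-homotopical} for the vertical maps, the cartesian product's evident homotopical behaviour for the bottom map, $2$-out-of-$3$ for the top, and Theorem~\ref{thm:boxprodOperadic} to identify the top map with $f\boxtimes\id_Y$. Nothing to add.
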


	\begin{prop}\label{prop:box-powers}
		Let{\hskip0pt minus 1pt} $f\colon X\to Y${\hskip0pt minus 1pt} be{\hskip0pt minus 1pt} a{\hskip0pt minus 1pt} $G$-global{\hskip0pt minus 1pt} weak{\hskip0pt minus 1pt} equivalence{\hskip0pt minus 1pt} in{\hskip0pt minus 1pt} $\cat{$\bm{E\mathcal M}$-$\bm G$-SSet}^\mu$ and let $n\ge0$. Then $f^{\boxtimes n}$ is a $(G\times\Sigma_n)$-global weak equivalence with respect to the $\Sigma_n$-action permuting the factors.

		Moreover, if both $X$ and $Y$ have no vertices supported on the empty set (i.e.~$\mathcal M$-fixed vertices), then $f^{\boxtimes n}/\Sigma_n$ is a $G$-global weak equivalence.
	\end{prop}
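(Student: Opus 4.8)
The plan is to prove the two assertions separately, reducing the first to the behaviour of the \emph{cartesian} power via the operadic comparison, and the second to a freeness property of the $\Sigma_n$-action. The case $n=0$ is trivial (both sides are the unit $*$), so I assume $n\ge 1$ throughout.

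For the first assertion I would exploit naturality of the comparison map $\Phi$, which gives a commutative square
\begin{equation*}
\begin{tikzcd}
E\Inj(n\times\omega,\omega)\times_{E\mathcal M^n}X^{\times n}\arrow[r,"\Phi"]\arrow[d] & X^{\times n}\arrow[d,"f^{\times n}"]\\
E\Inj(n\times\omega,\omega)\times_{E\mathcal M^n}Y^{\times n}\arrow[r,"\Phi"] & Y^{\times n}
\end{tikzcd}
\end{equation*}
in which, by Theorem~\ref{thm:boxprodOperadic}, the left vertical map is $\Sigma_n$-equivariantly identified with $f^{\boxtimes n}$. Both horizontal maps are $(G\times\Sigma_n)$-global weak equivalences by the second half of Proposition~\ref{prop:operadic-prod-homotopical}, so by $2$-out-of-$3$ it suffices to show that $f^{\times n}$ is a $(G\times\Sigma_n)$-global weak equivalence. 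For this I would fix a universal $H\subset\mathcal M$ and a homomorphism $\phi=(\alpha,\beta)\colon H\to G\times\Sigma_n$, decompose $\{1,\dots,n\}$ into its orbits under the $H$-action via $\beta$, and write $H_O\subset H$ for the stabilizer of a chosen point in an orbit $O$. Taking $\Gamma_{H,\phi}$-fixed points then yields a natural isomorphism $(X^{\times n})^{\phi}\cong\prod_O X^{\alpha|_{H_O}}$, and likewise for $Y$. The crucial observation is that each $H_O$ is again universal: the restriction of a complete $H$-universe to a subgroup is a complete universe for that subgroup, since every $H_O$-set $T$ embeds into $\mathrm{Res}^H_{H_O}\mathrm{Ind}^H_{H_O}T$. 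Hence every factor map $f^{\alpha|_{H_O}}$ is a weak homotopy equivalence, and so is their product.

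For the second assertion the key point is that, under the stated hypothesis, $\Sigma_n$ acts \emph{freely} on $X^{\boxtimes n}$ and $Y^{\boxtimes n}$. Indeed, if some $\sigma\neq\mathrm{id}$ fixed a vertex $(x_1,\dots,x_n)$, then $x_i=x_{\sigma(i)}$ for some $i\neq\sigma(i)$; but in the box product $x_i$ and $x_{\sigma(i)}$ are supported on disjoint co-infinite sets, so Proposition~\ref{prop:capSupp} would force $x_i$ to be supported on $\emptyset$, i.e.\ to be $\mathcal M$-fixed, contradicting the hypothesis. As the $\Sigma_n$-action commutes with the simplicial structure maps, freeness on vertices propagates to freeness on all simplices.

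It then remains to show that a free $\Sigma_n$-quotient carries a $(G\times\Sigma_n)$-global weak equivalence $g$ between free objects to a $G$-global weak equivalence; applied to $g=f^{\boxtimes n}$ this finishes the proof. I would verify this on fixed points: for universal $H$ and $\phi\colon H\to G$, viewing $A\to A/\Sigma_n$ as a $\Sigma_n$-covering produces a monodromy decomposition
\begin{equation*}
(A/\Sigma_n)^{\Gamma_{H,\phi}}\cong\coprod_{[\rho]}A^{\Gamma_{H,(\phi,\rho)}}/W_\rho,
\end{equation*}
indexed by conjugacy classes of homomorphisms $\rho\colon H\to\Sigma_n$, where $W_\rho=C_{\Sigma_n}(\rho)$ acts freely. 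Since the graph subgroups $\Gamma_{H,(\phi,\rho)}$ are exactly those detecting $(G\times\Sigma_n)$-global weak equivalences, $g$ induces a weak equivalence on each $A^{\Gamma_{H,(\phi,\rho)}}$, and a $W_\rho$-equivariant weak equivalence between free $W_\rho$-simplicial sets descends to a weak equivalence on quotients; passing to the coproduct gives the claim. I expect this descent step to be the main obstacle, as it requires the monodromy bookkeeping above and is the direct analogue of the argument used in the tame setting in \cite{lenzGglobal}; the freeness input, by contrast, is short but is exactly the point where the absence of $\mathcal M$-fixed vertices is used.
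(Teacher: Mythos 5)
Your proof is correct and follows essentially the same route as the paper: the first claim is reduced to the cartesian power via the naturality square for $\Phi$ together with Theorem~\ref{thm:boxprodOperadic} and Proposition~\ref{prop:operadic-prod-homotopical}, and the second to freeness of the $\Sigma_n$-action via the disjoint-supports argument and Proposition~\ref{prop:capSupp}. The only difference is that where the paper simply cites \cite[Corollaries~1.2.79 and~1.2.80]{lenzGglobal} for the two remaining ingredients (that $f^{\times n}$ is a $(G\times\Sigma_n)$-global weak equivalence, and that free $\Sigma_n$-quotients carry such equivalences to $G$-global ones), you supply direct proofs --- the orbit decomposition of fixed points of the cartesian power (using that subgroups of universal subgroups are universal) and the monodromy decomposition of the fixed points of the quotient --- both of which are correct.
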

	\begin{proof}
		The first statement follows like in the previous lemma, using that $f^{\times n}$ is a $G$-global weak equivalence in $\cat{$\bm{E\mathcal M}$-$\bm G$-SSet}$ by \cite[Corollary~1.2.79]{lenzGglobal}, also see Corollary~1.4.71 of \emph{op.~cit.}

		For the second statement, it will then be enough by \cite[Corollary~1.2.80]{lenzGglobal} that $\Sigma_n$ acts freely on both $X^{\boxtimes n}$ and $Y^{\boxtimes n}$. But indeed, let $Z$ be a mild $\mathcal M$-simplicial set and $(z_1,\dots,z_n)$ any vertex of $Z^{\boxtimes n}$. Then we can pick pairwise disjoint co-infinite sets $A_i$ such that $z_i$ is supported on $A_i$; thus, if $z_i=z_j$ for some $i\not=j$, then both are supported on $A_i\cap A_j=\emptyset$ by Proposition~\ref{prop:capSupp}. In particular, if $Z$ has no vertices supported on the empty set, then the $z_i$'s are pairwise distinct and therefore $\Sigma_n$ acts freely as claimed.
	\end{proof}

	\begin{lem}\label{lem:cofibrant-no-fixed-points}
		Let $X\in\cat{$\bm{E\mathcal M}$-$\bm G$-SSet}^\mu$ be cofibrant in the positive $G$-global model structure. Then no vertex of $X$ is supported on the empty set.
		\begin{proof}
			We will show more generally: \emph{if $i$ is a cofibration in the positive $G$-global model structure, then no vertex outside the image of $i$ is supported on the empty set.}

			For this we first observe that if $A$ is non-empty and $H$ acts arbitrarily on $A$, then no vertex of $E\Inj(A,\omega)/H$ is supported on the empty set by the same argument as in Example~\ref{ex:injA}. In particular, if $\phi\colon H\to G$ is any homomorphism, the same holds true for $E\Inj(A,\omega)\times_\phi G$ as this admits an $E\mathcal M$-equivariant map to $E\Inj(A,\omega)/H$. Thus, the claim holds for the generating cofibrations $(\ref{eq:gen-cof-pos-mild})$.

			To complete the proof, it suffices now to prove that the class of \emph{injective cofibrations} $i$ satisfying the above property is closed under pushouts, transfinite compositions, and retracts. 	The latter two are easy, so we will argue for the first one. As pushouts are computed levelwise, this amounts to saying that given any pushout
			\begin{equation*}
				\begin{tikzcd}
					A\arrow[r, "i"]\arrow[d]\arrow[dr, phantom, "\ulcorner"{very near end}] & B\arrow[d,"g"]\\
					C\arrow[r, "j"'] & D
				\end{tikzcd}
			\end{equation*}
			in $\mathcal M$-sets such that $i$ is injective and every $b\in B^{\mathcal M}$ is contained in the image of $i$, then every $d\in D^\mathcal M$ is contained in the image of $j$. But by the standard construction of pushouts along injections in sets, $j$ and $g$ exhibit $D$ as the disjoint union of $C$ and $B\setminus i(A)$. By Lemma~\ref{lemma:complement}, $B\setminus i(A)$ is closed under the $\mathcal M$-action on $B$, so this is necessarily a decomposition of $\mathcal M$-sets and the claim follows immediately.
		\end{proof}
	\end{lem}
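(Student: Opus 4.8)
The plan is to prove a slightly stronger statement about \emph{all} cofibrations rather than merely about cofibrant objects, namely: \emph{if $i\colon A\to B$ is a cofibration in the positive $G$-global model structure, then no vertex of $B$ lying outside $\im(i)$ is supported on the empty set.} Applying this to the cofibration from the initial (empty) object then yields the lemma, since in that case every vertex of $X$ lies outside the image. To establish the stronger claim I would show that the class of injective cofibrations enjoying this property contains the generating cofibrations and is closed under pushout, transfinite composition, and retract; since this class then contains the whole saturation of the generators, it contains all cofibrations. Working with \emph{injective} cofibrations throughout is harmless and convenient, as it lets me compute all the relevant pushouts levelwise as pushouts of underlying $\M$-sets.

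For the generating cofibrations $(\ref{eq:gen-cof-pos-mild})$ the only vertices of the codomain outside the image are those of the source $E\Inj(A,\omega)^\mu\times_\phi G$, so it suffices to check that this object has no $\M$-fixed vertices when $A$ is non-empty. Since $E\Inj(A,\omega)^\mu\times_\phi G$ admits an $\EM$-equivariant map to $E\Inj(A,\omega)/H$, it is enough to rule out $\M$-fixed vertices of the latter. But a vertex there is represented by an injection $u\colon A\to\omega$, and being $\M$-fixed would force $f(\im u)=\im u$ for every $f\in\M$ (the $H$-action only reparametrizes the source and hence preserves the image). As $A$, and therefore $\im u$, is non-empty, a shift $f$ moving some point of $\im u$ out of it gives a contradiction, exactly as in Example~\ref{ex:injA}.

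The crux of the argument is closure under pushouts. Given a pushout square with injective $i\colon A\to B$ along some map $A\to C$, producing $j\colon C\to D$ and $g\colon B\to D$, I would use that, as a pushout of sets along the injection $i$, this square exhibits $D$ as the disjoint union of $C$ (via $j$) and $B\setminus i(A)$ (via $g$). The key point is that this set-level splitting is in fact a decomposition of $\M$-sets: by Lemma~\ref{lemma:complement} the complement $B\setminus i(A)$ of the $\M$-subset $i(A)$ is again an $\M$-subset, so $g$ restricts to an $\M$-equivariant inclusion $B\setminus i(A)\hookrightarrow D$. Consequently any $\M$-fixed vertex $d$ of $D$ lies either in $\im(j)$, in which case there is nothing to prove, or in $B\setminus i(A)$; but in the latter case $d$ is an $\M$-fixed vertex of $B$ outside $\im(i)$, contradicting the inductive hypothesis. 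Closure under transfinite composition and retracts is then routine. I expect this pushout step to be the only real obstacle, and it is precisely where mildness enters through Lemma~\ref{lemma:complement}: the analogous statement fails for general $\M$-sets, where complements of $\M$-subsets need not be $\M$-subsets.
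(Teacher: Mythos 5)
Your proposal is correct and follows essentially the same route as the paper: reduce to the stronger claim about all cofibrations, check the generating cofibrations via the $E\mathcal M$-equivariant map to $E\Inj(A,\omega)/H$ and the argument of Example~\ref{ex:injA}, and handle pushouts by the disjoint-union decomposition $D\cong C\amalg(B\setminus i(A))$ made $\mathcal M$-equivariant via Lemma~\ref{lemma:complement}. No substantive differences.
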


	\begin{lem}\label{lem:more-cofibrations}
		Let $H$ be a finite group, let $A$ be a non-empty countable faithful $H$-set, and let $i\colon X\to Y$ be a levelwise injection of $(G\times H)$-simplicial sets such that $G$ acts freely on $Y$ outside the image of $i$. Then $E\Inj(A,\omega)^\mu\times_H i$ is a cofibration in $\cat{$\bm{E\mathcal M}$-$\bm G$-SSet}^\mu$.
		\begin{proof}
			We recall from \cite[Proposition~2.16]{steph} that any such map $i$ can be written as a (retract of a) relative cell complex with respect to the set
			\begin{equation*}
				\big\{(G\times H)/\Gamma_{K,\phi}\times(\del\Delta^n\hookrightarrow\Delta^n) : n\ge 0, H\supset K\xrightarrow{\phi} G\big\};
			\end{equation*}
			as $E\Inj(A,\omega)\times_H{-}$ is cocontinuous, it therefore suffices to prove the lemma for each of these generating maps. However, $E\Inj(A,\omega)^\mu\times_H (G\times H)/\Gamma_{K,\phi}\times(\del\Delta^n\hookrightarrow\Delta^n)$ simply agrees with the generating cofibration $E\Inj(A,\omega)\times_{\phi|_K} G\times(\del\Delta^n\hookrightarrow\Delta^n)$ up to isomorphism.
		\end{proof}
	\end{lem}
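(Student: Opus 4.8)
The plan is to decompose $i$ into equivariant cells, apply the cocontinuous functor $E\Inj(A,\omega)^\mu\times_H({-})$ cell by cell, and identify each resulting map with a generating cofibration of the positive $G$-global model structure from Proposition~\ref{prop:*-mod-quillen}. The first task is to recognise the hypotheses on $i$ as a cofibrancy condition for a family of subgroups. A subgroup $\Lambda\subset G\times H$ satisfies $\Lambda\cap(G\times\{e\})=\{e\}$ if and only if its projection to $H$ is injective, i.e.\ $\Lambda=\Gamma_{K,\phi}$ is the graph of a homomorphism $\phi\colon K\to G$ on some subgroup $K\subset H$; and for such a $\Lambda$ this is in turn equivalent to $G$ acting freely on the orbit $(G\times H)/\Lambda$. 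Consequently, the assumption that $i$ is a levelwise injection with $G$ acting freely on $Y$ outside the image of $i$ says exactly that $i$ is a cofibration for the family $\mathcal F$ of graph subgroups of $G\times H$. By \cite[Proposition~2.16]{steph}, $i$ is then a retract of a relative cell complex built from the maps $(G\times H)/\Gamma_{K,\phi}\times(\partial\Delta^n\hookrightarrow\Delta^n)$.

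Next I would invoke cocontinuity to reduce to these cells. Regarded as a functor from $(G\times H)$-simplicial sets to $\cat{$\bm{E\mathcal M}$-$\bm G$-SSet}$, the construction $E\Inj(A,\omega)^\mu\times_H({-})$ is a product with a fixed object followed by a quotient by the $H$-action, hence preserves all colimits; moreover it lands in mild objects because $E\Inj(A,\omega)^\mu$ is mild, and since $\EMSSetm$ is closed under colimits in $\EMSSet$ (Lemma~\ref{lemma:tau-mu}) the corestricted functor into $\EMSSetm$ is still cocontinuous. As cofibrations are closed under retracts, pushouts, and transfinite composition, and the functor preserves all of these, it therefore suffices to prove that $E\Inj(A,\omega)^\mu\times_H\big((G\times H)/\Gamma_{K,\phi}\times(\partial\Delta^n\hookrightarrow\Delta^n)\big)$ is a cofibration in $\EMSSetm$ for every generating cell.

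Finally I would carry out the core identification. Pulling the (equivariantly trivial) simplicial factor out of the balanced product rewrites this map as $\big(E\Inj(A,\omega)^\mu\times_H(G\times H)/\Gamma_{K,\phi}\big)\times(\partial\Delta^n\hookrightarrow\Delta^n)$, so it remains to analyse the $E\mathcal M$-$G$-simplicial set $E\Inj(A,\omega)^\mu\times_H(G\times H)/\Gamma_{K,\phi}$. Here I would normalise the $H$-coordinate of a class $[\,p,[g,h]\,]$ to the identity using the defining relation of the balanced product, which replaces $p$ by $p\cdot h$; this produces a natural isomorphism, equivariant for the postcomposition $E\mathcal M$-action and the residual $G$-action, with $E\Inj(A,\omega)^\mu\times_\phi G$, where $A$ is now viewed as a $K$-set by restriction along $K\hookrightarrow H$. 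Since $A$ is non-empty and remains faithful over $K$ (faithfulness passes to subgroups), the map $E\Inj(A,\omega)^\mu\times_\phi G\times(\partial\Delta^n\hookrightarrow\Delta^n)$ is precisely one of the generating cofibrations of Proposition~\ref{prop:*-mod-quillen}, which finishes the proof. I expect the main obstacle to be the very first step—matching the freeness hypothesis to the graph-subgroup family so that Stephan's cellular decomposition is available—together with checking that the balanced-product isomorphism respects both group actions; the remaining bookkeeping is routine.
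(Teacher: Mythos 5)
Your proof is correct and follows exactly the same route as the paper's: Stephan's cell decomposition with respect to the graph-subgroup family, cocontinuity of $E\Inj(A,\omega)^\mu\times_H({-})$, and the identification of each cell with a generating cofibration of Proposition~\ref{prop:*-mod-quillen}. You merely spell out the steps the paper leaves implicit (recognising the freeness hypothesis as the graph-family cofibrancy condition, and the normalisation giving $E\Inj(A,\omega)^\mu\times_H\big((G\times H)/\Gamma_{K,\phi}\big)\cong E\Inj(A,\omega)^\mu\times_\phi G$ with $A$ restricted to a $K$-set), and these verifications are all correct.
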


 	\begin{proof}[Proof of Theorem~\ref{thm:*-alg-model}]
		The symmetric monoidal category $(\cat{$\bm{E\mathcal M}$-$\bm G$-SSet}^\mu,\boxtimes)$ is closed by Corollary~\ref{cor:boxprod-closed}. Moreover, the unit axiom is immediate from Lemma~\ref{lem:boxprod-homotopical}.

		For the pushout product for cofibrations, we may restrict to generating cofibrations, where we compute using Example~\ref{ex:boxtimes-Inj} that for all $n_1,n_2\ge0$, all finite groups $H_1,H_2$, every countable faithful non-empty $H_1$-set $A_1$ and $H_2$-set $A_2$, and all homomorphisms $\phi_i\colon H_1\to G$ ($i=1,2$)
 		\begin{equation*}
 		\big(E{\Inj}(A_1,\omega)^{\mu}\times_{\phi_1}G\times(\partial\Delta^{n_1}\hookrightarrow  \Delta^{n_1})\big) \ppo\big(E{\Inj}(A_2,\omega)^{\mu}\times_\phi G\times(\partial\Delta^{n_2}\hookrightarrow \times \Delta^{n_2})\big)
 		\end{equation*}
 		agrees up to conjugation by isomorphisms with the map
 		\begin{equation}\label{eq:ppo-gen-cof-computed}
 		E{\Inj}(A_1\amalg A_2,\omega)^{\mu} \times_{H_1\times H_2}\big(G\times G\times(\partial \Delta^{n_1} \hookrightarrow\Delta^{n_1})\ppo(\partial \Delta^{n_2} \hookrightarrow\Delta^{n_2})\big)
 		\end{equation}
		where $H_1\times H_2$ acts on $A_1\amalg A_2$ in the obvious way and on $G\times G$ from the right via $\phi_1\times\phi_2$. As $A_1\amalg A_2$ is a faithful $(H_1\times H_2)$-set and $\cat{SSet}$ is a symmetric monoidal model category, Lemma~\ref{lem:more-cofibrations} shows that $(\ref{eq:ppo-gen-cof-computed})$ is a cofibration.

		Next, let $i\colon X\to X'$ be any cofibration, $j\colon Y\to Y'$ an acyclic cofibration, and consider the diagram
		\begin{equation*}
			\begin{tikzcd}
				X\boxtimes Y\arrow[dr,phantom,"\ulcorner"{very near end}]\arrow[d, "i\boxtimes Y"']\arrow[r, "X\boxtimes j"] & X\boxtimes Y'\arrow[d]\arrow[ddr, bend left=15pt, "i\boxtimes Y'"]\\
				X'\boxtimes Y\arrow[r]\arrow[drr, bend right=15pt, "X'\boxtimes j"'] & P\arrow[dr, dashed, "i\ppo j"{description}]\\
				&& X'\boxtimes Y'
			\end{tikzcd}
		\end{equation*}
		defining $i\ppo j$. Then Lemma~\ref{lem:boxprod-homotopical} shows that $X\boxtimes j$ and $X'\boxtimes j$ are $G$-global weak equivalences, while $i\boxtimes Y$ is an injective cofibration by direct inspection. Thus, the above pushout is already a homotopy pushout, and we conclude as in the proof of Proposition~\ref{prop:*-mod-quillen} that $i\ppo j$ is a weak equivalence. This finishes the proof of the pushout product axiom and hence that $\cat{$\bm{E\mathcal M}$-$\bm G$-SSet}^\mu$ is a symmetric monoidal model category.

		For the monoid axiom, we similarly observe that $X\boxtimes i$ is an injective cofibration for any cofibration in $\cat{$\bm{E\mathcal M}$-$\bm G$-SSet}^\mu$, so any relative $\mathscr C\boxtimes\text{(acyclic cofibrations)}$-cell complex is an acyclic cofibration in the injective $G$-global model structure on $\cat{$\bm{E\mathcal M}$-$\bm G$-SSet}$.

 		For the strong commutative monoid axiom for cofibrations, we may restrict to generating cofibrations by \cite[Lemma~A.1]{white}, where a similar computation as above identifies
		\begin{equation*}
			\big((E\Inj(A,\omega)^\mu\times_\phi G)\times(\del\Delta^n\hookrightarrow\Delta^n)\big)^{\ppo n}\big/\Sigma_n
		\end{equation*}
		with
		\begin{equation*}
			E\Inj(n\times A,\omega)^\mu\times_{\Sigma_n\wr H}\big( G^n \times (\del\Delta^n\hookrightarrow\Delta^n)^{\ppo n}\big)
		\end{equation*}
		where $\Sigma_n\wr H$ denotes the wreath product as usual, $\Sigma_n$ acts via permuting $n$ and the factors, and the copies of $H$ act in the evident way. As $n\times A$ is a faithful $(\Sigma_n\wr H)$-set (this uses $A\not=\emptyset$), Lemma~\ref{lem:more-cofibrations} then shows that this is a cofibration.

		For the acyclicity part, we observe that the generating cofibrations of Proposition~\ref{prop:*-mod-quillen} are maps between cofibrant objects, so \cite[Corollary~2.7]{bar} shows that there also exists a set $J$ of generating \emph{acyclic} cofibrations consisting of maps between cofibrant objects. By \cite[Corollaries~10 and~23]{gor}, it therefore suffices to show that for any acyclic cofibration $j\colon X\to Y$ of cofibrant objects the map $j^{\boxtimes n}/\Sigma_n$ is a $G$-global weak equivalence. This follows in turn immediately by combining Proposition~\ref{prop:box-powers} and Lemma~\ref{lem:cofibrant-no-fixed-points} above.

		Thus, Theorem~\ref{thm:comMon} shows that the model structur exists, and that it is combinatorial, right proper, and simplicial. Moreover, Proposition~\ref{prop:cmon-left-proper} shows that the model structure is also left proper, while filtered colimits are homotopical as they are created in $\cat{$\bm{E\mathcal M}$-$\bm G$-SSet}$.

		Finally, to see that $\incl\colon\cat{$\bm G$-ParSumSSet}\rightleftarrows\cat{$\bm G$-$\bm *$-Alg}:\!(-)^\tau$ is a Quillen equivalence, it suffices to note that fibrations and weak equivalences are created in $\cat{$\bm{E\mathcal M}$-$\bm G$-SSet}^\tau$ and $\cat{$\bm{E\mathcal M}$-$\bm G$-SSet}^\mu$, respectively, so Proposition~\ref{prop:*-mod-equiv} immediately implies that it is a Quillen adjunction, that the left adjoint preserves and reflects weak equivalences, and that for every fibrant commutative $G$-$*$-algebra $A$ the counit $A^\tau\hookrightarrow A$ is a $G$-global weak equivalence. The claim follows immediately.
	\end{proof}

	\begin{rem}\label{rk:operads-again}
		Combining the above with \cite[Theorem~2.3.1]{lenzGglobal}, the homotopy theory of commutative $G$-$*$-algebras is also equivalent to the homotopy theory of \emph{$G$-global special $\Gamma$-spaces}, while \cite[Theorem~5.27]{lenzOperads} shows that it is equivalent to the homotopy theory of algebras over any so-called \emph{twisted $G$-global $E_\infty$-operad}. However, the original proof of the equivalence between twisted $G$-global $E_\infty$-algebras and $G$-parsummable simplicial sets in \cite{lenzOperads} is somewhat subtle and indirect; in particular, it relies on an $\infty$-categorical monadicity argument to lift the unstable comparison from Theorem~\ref{thm:tame-vs-all}.

		In contrast to that, Remark~\ref{rk:*-alg-vs-operadic-alg} allows for a very easy comparison between commutative $G$-$*$-algebras and $G$-$\mathcal I$-algebras: the inclusion and its right adjoint $(-)^\mu$ define mutually inverse equivalences between the homotopy categories. Together with Theorem~\ref{thm:*-alg-model} this then gives a new, more elementary proof of the equivalence between the $G$-global homotopy theory of $G$-parsummable simplicial sets and of $\mathcal I$-algebras in $\cat{$\bm G$-SSet}$ (where $\mathcal I$ carries the trivial $G$-action). Moreover, if we identify $G$ with a universal subgroup of $\mathcal M$, then we can make $\mathcal I$ into a \emph{genuine $G$-$E_\infty$-operad} in the sense of \cite{guillou-may} via the conjugation $G$-action, and the above together with \cite[Lemma~4.23]{lenzOperads} then gives a direct equivalence between the $G$-equivariant homotopy theory of $G$-parsummable simplicial sets and of genuine $G$-$E_\infty$-algebras.
	\end{rem}

	%
	% BEGIN staralg.bbl
	%
	\providecommand{\bysame}{\leavevmode\hbox to3em{\hrulefill}\thinspace}
	\providecommand{\MR}{\relax\ifhmode\unskip\space\fi MR }
	% \MRhref is called by the amsart/book/proc definition of \MR.
	\providecommand{\MRhref}[2]{%
	  \href{http://www.ams.org/mathscinet-getitem?mr=#1}{#2}
	}
	\providecommand{\href}[2]{#2}
	
	%
	% END staralg.bbl
	%
\end{document}